\newcommand{\al}{\alpha}
\newcommand{\si}{\sigma}
\newcommand{\id}{\mathrm{id}}
\newcommand{\ot}{\otimes}
\newcommand{\trl}{\triangleleft}
\newcommand{\trr}{\triangleright}
\def\ppr{\rightharpoonup}
\def\ppl{\leftharpoonup}
\newcommand{\li}{{}_{1}}
\newcommand{\lii}{{}_{2}}
\newcommand{\lmo}{{}_{(0)}} 
\newcommand{\loo}{{}_{(0)}}
\newcommand{\loi}{{}_{(-1)}}
\newcommand{\lmoo}{{}_{(0)}}
\newcommand{\lmi}{{}_{(1)}}
\newcommand{\lmoi}{{}_{(-1)}}
\newcommand{\mo}{{}_{(0)}}
\newcommand{\mi}{{}_{(1)}}
\newcommand{\moi}{{}_{(-1)}}
\newcommand{\boo}{{}_{[0]}}
\newcommand{\bi}{{}_{[1]}}
\newcommand{\bii}{{}_{[2]}}
\newcommand{\boi}{{}_{[-1]}}
\newcommand{\ppi}{{}_{<1>}}
\newcommand{\pii}{{}_{<2>}}
\newcommand{\qi}{{}_{\{1\}}}
\newcommand{\qii}{{}_{\{2\}}}
\def\rbiprod{{\cdot\kern-.33em\triangleright\!\!\!<}}
\def\lbiprod{{>\!\!\!\triangleleft\kern-.33em\cdot\, }}
\def\lrbiprod{{\ \cdot\kern-.60em\triangleright\kern-.33em\triangleleft\kern-.33em\cdot\, }}
\def\lprod{{>\!\!\!\triangleleft\kern-.33em\ \, }}
\newcommand{\lrcoprod}{{\,\blacktriangleright\!\!\blacktriangleleft\, }}
\newtheorem{theorem}{Theorem}[section]
\newtheorem{lemma}[theorem]{Lemma}
\newtheorem{corollary}[theorem]{Corollary}
\theoremstyle{definition}
\newtheorem{definition}[theorem]{Definition}
\title{Extending structures for alternative bialgebras}
\author{Tao Zhang, Fang Yang}
\date{}
\begin{document}
 \maketitle

 \setcounter{section}{0}

\begin{abstract}
We introduce the concept of braided  alternative  bialgebra. The theory of cocycle bicrossproducts for alternative bialgebras is developed.
As an application, the extending problem for alternative bialgebra is solved by using some non-abelian cohomology theory.
\par\smallskip
{\bf 2020 MSC:} 17D05,  16T10

\par\smallskip
{\bf Keywords:}
Braided  alternative  bialgebras, cocycle bicrossproducts, extending structures,  non-abelian cohomology.
\end{abstract}

\tableofcontents

\section{Introduction}

As a generalization of associative algebras, various aspects of alternative algebra have been investigated by mathematicians. The theory of representations of alternative algebras was given by R. D. Schafer in \cite{Schafer} and N. Jacobson in \cite{Ja54}. The concept of alternative bialgebras was introduced by Goncharov in \cite{Gon} which is related to classical Yang--Baxter equation on alternative algebras.
This new type of algebraic structure was further studied by Ni and Bai in \cite{NB} and by Sun in \cite{Sun}.

The theory of extending structure for many types of algebras were well  developed by A. L. Agore and G. Militaru in \cite{AM1,AM2,AM3,AM4,AM5,AM6}.
Let $A$ be an algebra and $E$ a vector space containing $A$ as a subspace.
The extending problem is to describe and classify all algebra structures on $E$ such that $A$ is a subalgebra of $E$.
They show that associated to any extending structure of $A$ by a complement space $V$, there is a  unified product on the direct sum space  $E\cong A\oplus V$.
Recently, extending structures for  alternative algebras and pre-alternative algebras were studied  in \cite{Z5}.
Extending structures for 3-Lie algebras, Lie bialgebras,  infinitesimal bialgebras and anti-flexible bialgebras were studied  in \cite{Hong,Z2,Z3,Z4}.

As a continue of our paper \cite{Z5} and \cite{Z3,Z4}, the aim of this paper is to study  extending structures for  alternative bialgebras.
For this purpose, we will introduce the concept of braided alternative bialgebras which a generalization of Goncharov's alternative bialgebras.  Then we give the construction of cocycle bicrossproducts for alternative bialgebras.
We will show that these new concept  and  construction  will play a key role in considering extending problem for alternative bialgebras.
As an application, we solve the extending problem for alternative bialgebras by using some non-abelian cohomology theory.

This paper is organized as follows. In Section 2, we recall some
definitions and fix some notations. In Section 3, we introduced the concept of braided alternative bialgebras and proved the bosonisation
theorem associating braided alternative bialgebras to ordinary alternative bialgebras.
In Section 4, we define the notion of matched pairs of  braided alternative bialgebras and construct cocycle bicrossproduct alternative bialgebras through two generalized braided alternative bialgebras.
In Section 5, we studied the extending problems for alternative bialgebras and proof that they can be classified by some non-abelian cohomology theory.

Throughout the following of this paper, all vector spaces will be over a fixed field of character zero. 
An  algebra or a   coalgebra   is  denoted by $(A, \cdot)$ or $(A, \Delta)$.
The identity map of a vector space $V$ is denoted by $\id_V: V\to V$ or simply $\id: V\to V$. The flip map $\tau: V\ot V\to V\ot V$ is defined by $\tau(u\ot v)=v\ot u$ for all $u, v\in V$.

\section{Preliminaries}

\begin{definition} An alternative algebra is a vector space $A$ with a multiplication $\cdot : A\ot A\rightarrow A : (x, y)\mapsto x\cdot y$ such that the following identities hold:
\begin{equation}\label{eq:LB1}
 ({a}, {b}, {c})=-({b}, {a}, {c}), \quad  ({a}, {b}, {c})=-({a}, {c}, {b}),
 \end{equation}
 where $({a}, {b}, {c})=({a}\cdot {b})\cdot {c}-{a}\cdot({b}\cdot {c})$ is the associator of the elements ${a}, {b}, {c}\in A$.

In the following, we always omit $``\cdot"$ in calculation of this paper for convenience.

Note that the above identities are equivalent to the following identities:
\begin{eqnarray}\label{eq:LB2}
 && ({a} {b}){c}-{a}({b} {c})+({b} {a}){c}-{b}({a} {c})=0,\\
\label{eq:LB3}
&& ({a} {b}){c}-{a}({b} {c})+({a} {c}){b}-{a}({c} {b})=0.
 \end{eqnarray}
\end{definition}
\begin{definition}  An alternative coalgebra $A$ is a vector space equipped with a  comultiplication $\Delta: A\rightarrow A\ot A$ such that the following conditions are satisfied,
\begin{equation}\label{eq:LB4}
 (\Delta\ot\id)\Delta(a)-(\id\ot\Delta)\Delta(a)=-\tau_{12}\big((\Delta\ot\id)\Delta(a)-(\id\ot\Delta)\Delta(a)\big),
 \end{equation}
 \begin{equation}\label{eq:LB5}
 (\Delta\ot\id)\Delta(a)-(\id\ot\Delta)\Delta(a)=-\tau_{23}\big((\Delta\ot\id)\Delta(a)-(\id\ot\Delta)\Delta(a)\big),
 \end{equation}
where $\tau_{12}(a\ot b\ot c)=b\ot a\ot c$ and $\tau_{23}(a\ot b\ot c)=a\ot c\ot b$. We denote this alternative coalgebra by $(A, \Delta)$.
\end{definition}
\begin{definition}(\cite{Gon}) \label{dfnlb}
An alternative bialgebra is a vector space $A$ equipped simultaneously with a multiplication $\cdot : A\ot A\rightarrow A  $ and a comultiplication $\Delta: A\rightarrow A\ot A$ such that $(A, \, \cdot)$ is an alternative algebra , $(A, \Delta)$ is an alternative coalgbra and the following conditions are satisfied,
\begin{equation}\label{eq:LB6}
 \Delta(ab)=\sum (a\li b\ot a\lii+a\lii b\ot a\li-a\lii\ot b a\li)
 +\sum (b\li\ot a b\lii+b\li\ot b\lii a-a b\li\ot b\lii),
 \end{equation}
 \begin{equation}\label{eq:LB7}
 \Delta(ba)+\tau\Delta(b a)=\sum(a\li\ot b a\lii+b a\lii\ot a\li)
 +\sum (b\li a\ot b\lii+b\lii\ot b\li a).
 \end{equation}
\end{definition}

We denote it by $(A, \cdot , \Delta )$. One can also write the above equations as ad-actions on tensors by
\begin{equation}\label{eq:LB8}
\Delta(ab)=\Delta (a)\bullet b+\tau\Delta (a)\bullet b- b\bullet\tau\Delta(a)+a\bullet \Delta(b)+[\Delta(b), a],
 \end{equation}
 \begin{equation}\label{eq:LB9}
\Delta(ba)+\tau\Delta(ba)=b\bullet \Delta (a)+b\cdot\tau\Delta (a)+ \Delta(b)\bullet a+\tau \Delta(b)\cdot a,
 \end{equation}
where $a\cdot \Delta(b): =\sum  a b\li\ot b\lii$ , $\Delta (a)\cdot b: = \sum a\li\ot a\lii b$ , $a\bullet \Delta(b): =\sum b\li\ot a b\lii$ , $\Delta(a)\bullet b: =\sum  a\li b\ot a\lii$ and $[\Delta(b), a]: =\Delta(b)\cdot a-a\cdot\Delta(b)$.

\begin{definition}
Let ${H}$ be an alternative algebra and  $V$ be a vector space. Then $V$ is called an ${H}$-bimodule if there is a pair of linear maps $ \trr :  {H}\otimes V \to V, (x, v) \to x \trr v$ and $\trl :  V\otimes {H} \to V, (v, x) \to v \trl x$  such that the following conditions hold:
\begin{eqnarray}
\label{(10)}  &&(xy) \trr v - x \trr (y \trr v)+(yx)\trr v-y\trr (x\trr v)=0,\\
 \label{(11)}   && v \trl (xy) -(v\trl x)\trl y +x\trr(v\trl y)-(x\trr v)\trl y=0,\\
 \label{(12)}   &&(xy)\trr v-x\trr(y\trr v)+(x\trr v)\trl y-x\trr(v\trl y)  = 0,\\
\label{(13)}    &&(v\trl x)\trl y  - v\trl (xy)+(v\trl y)\trl x-v\trl (yx)=0,
\end{eqnarray}
for all $x, y\in {H}$ and $v\in V.$
\end{definition}
The category of  bimodules over $H$ is denoted  by ${}_{H}\mathcal{M}{}_{H}$.

\begin{definition}
Let ${H}$ be an alternative coalgebra, $V$ be a vector space. Then $V$ is called an ${H}$-bicomodule if there is a pair of linear maps $\phi: V\to {H}\otimes V$ and $\psi: V\to V\otimes {H}$  such that the following conditions hold:
\begin{eqnarray}
\label{(14)} &&\left(\Delta_{H} \otimes \id _{V}\right)\phi(v)-(\id _{H}\ot \phi)\phi(v)=-\tau_{12}\big((\Delta_{H}\ot\id_{V})\phi(v)-\left(\id _{H} \otimes \phi\right) \phi(v)\big),\\
\label{(15)} &&\left(\psi \otimes \id _{H}\right)\psi(v)-(\id _{V}\ot \Delta_{H})\psi(v)=-\tau_{12}\big((\phi\ot\id_{H})\psi(v)-\left(\id _{H} \otimes \psi\right) \phi(v)\big),\\
\label{(16)} &&\left(\Delta_{H} \otimes \id _{V}\right)\phi(v)-(\id _{H}\ot \phi)\phi(v)=-\tau_{23}\big((\phi\ot\id_{H})\psi(v)-\left(\id _{H} \otimes \psi\right) \phi(v)\big),\\
\label{(17)} &&\left(\psi \otimes \id _{H}\right)\psi(v)-(\id _{V}\ot \Delta_{H})\psi(v)=-\tau_{23}\big((\psi\ot\id_{H})\psi(v)-\left(\id _{V} \otimes \Delta_{H}\right) \psi(v)\big).
\end{eqnarray}
If we denote by  $\phi(v)=v\moi\ot v\mo$ and $\psi(v)=v\mo\ot v\mi$, then the above equations can be written as
\begin{eqnarray}
  &&\Delta_{H}\left(v_{(-1)}\right) \otimes v_{(0)}-v\loi\ot\phi(v\loo)=-\tau_{12}\big(\Delta_{H}(v\loi)\ot v\loo-v_{(-1)} \otimes \phi\left(v_{(0)}\right)\big),\\
  &&\psi(v\loo)\ot v\lmi-v_{(0)} \otimes \Delta_{H}\left(v_{(1)}\right)=-\tau_{12}\big(\phi\left(v_{(0)}\right) \otimes v_{(1)}-v\loi\ot\psi(v\loo)\big),\\
  &&\Delta_{H}\left(v_{(-1)}\right) \otimes v_{(0)}-v\loi\ot\phi(v\loo)=-\tau_{23}\big(\phi(v\loo)\ot v\lmi-v_{(-1)} \otimes \psi\left(v_{(0)}\right)\big),\\
  &&\psi(v\loo)\ot v\lmi-v_{(0)} \otimes \Delta_{H}\left(v_{(1)}\right)=-\tau_{23}\big(\psi\left(v_{(0)}\right) \otimes v_{(1)}-v\loo\ot\Delta_{H}(v\lmi)\big).
\end{eqnarray}
\end{definition}
The category of  bicomodules over $H$ is denoted by ${}^{H}\mathcal{M}{}^{H}$.

\begin{definition}
Let ${H}$ and  ${A}$ be alternative algebras. An action of ${H}$ on ${A}$ is a pair of linear maps $\trr:{H}\otimes {A} \to {A}, (x, a) \to x \trr a$ and $\trl: {A}\otimes {H} \to {A}, (a, x) \to a \trl x$  such that $A$ is an $H$-bimodule and  the following conditions hold:
\begin{eqnarray}
  &&(a\trl x)b-a(x\trr b)+(x\trr a)b-x\trr( a b)=0,\\
  &&(a b)\trl x-a(b\trl x)+(b a)\trl x-b(a\trl x)=0 ,\\
  &&(x\trr a)b-x\trr( a b)+(x\trr b)a-x\trr( b a)=0,\\
  &&(a\trl x)b-a(x\trr b)+(a b)\trl x-a(b\trl x)=0,
\end{eqnarray}
for all $x\in {H}$ and $a, b\in {A}.$ In this case, we call $(A, \trr, \trl)$ to be an $H$-bimodule alternative algebra.
\end{definition}

\begin{definition}
Let ${H}$ and  ${A}$ be alternative coalgebras. An coaction of ${H}$ on ${A}$ is a pair of linear maps $\phi: {A}\to {H}\otimes {A}$ and $\psi: {A}\to {A}\otimes {H}$ such that $A$ is an $H$-bicomodule and  the following conditions hold:
\begin{eqnarray}
  &&(\phi\ot\id_A)\Delta_{A}(a)-(\id_H \otimes \Delta_{A})\phi(a)=-\tau_{12}\big((\psi\ot\id_A)\Delta_{A}(a)-(\id_A\otimes \phi) \Delta_{A}(a)\big),\\
  &&(\Delta_{A}\ot\id_H)\psi(a)-(\id_A \otimes \psi)\Delta_{A}(a)=-\tau_{12}\big((\Delta_{A}\ot\id_H)\psi(a)-(\id_A\otimes \psi) \Delta_{A}(a)\big),\\
 &&(\phi\ot\id_A)\Delta_{A}(a)-(\id_H \otimes \Delta_{A})\phi(a)=-\tau_{23}\big((\phi\ot\id_A)\Delta_{A}(a)-(\id_H\otimes\Delta_{A} ) \phi(a)\big),\\
  &&(\Delta_{A}\ot\id_H)\psi(a)-(\id_A \otimes \psi)\Delta_{A}(a)=-\tau_{23}\big((\psi\ot\id_A)\Delta_{A}(a)-(\id_A\otimes \phi) \Delta_{A}(a)\big).
\end{eqnarray}
If we denote by  $\phi(a)=a\moi\ot a\mo$ and $\psi(a)=a\mo\ot a\mi$, then the above equations can be written as
\begin{eqnarray}
  &&\phi(a\li)\ot a\lii-a\loi\ot\Delta_{A}(a\loo)=-\tau_{12}\big(\psi(a\li)\ot a\lii-a\li\ot\phi(a\lii)\big),\\
  &&\Delta_{A}(a\loo)\ot a\lmi-a\li\ot\psi(a\lii)=-\tau_{12}\big(\Delta_{A}(a\loo)\ot a\lmi-a\li\ot\psi(a\lii)\big),\\
  &&\phi(a\li)\ot a\lii-a\loi\ot\Delta_{A}(a\loo)=-\tau_{23}\big(\phi(a\li)\ot a\lii-a\loi\ot\Delta_{A}(a\loo)\big),\\
  &&\Delta_{A}(a\loo)\ot a\lmi-a\li\ot\psi(a\lii)=-\tau_{23}\big(\psi(a\li)\ot a\lii-a\li\ot\phi(a\lii)\big),
\end{eqnarray}
for all $a\in {A}.$ In this case, we call $(A, \phi, \psi)$ to be an $H$-bicomodule alternative coalgebra.
\end{definition}

\begin{definition}
Let $({A}, \cdot)$ be a given alternative  algebra (alternative coalgebra, alternative  bialgebra), $E$ be a vector space.
An extending system of ${A}$ through $V$ is an alternative algebra  (alternative coalgebra, alternative bialgebra) on $E$
such that $V$ is a complement subspace of ${A}$ in $E$, the canonical injection map $i: A\to E, a\mapsto (a, 0)$  or the canonical projection map $p: E\to A, (a, x)\mapsto a$ is an alternative algebra (alternative coalgebra, alternative bialgebra) homomorphism.
The extending problem is to describe and classify up to an isomorphism  the set of all alternative algebra  (alternative coalgebra, alternative  bialgebra) structures that can be defined on $E$.
\end{definition}

We remark that our definition of extending system of ${A}$ through $V$ contains not only extending structure in \cite{AM1,AM2,AM3}
but also the global extension structure in \cite{AM5}.
The reason is that when we consider extending problem for Lie bialgebras, both of them are necessarily used,
this will be clear in the context of next two sections.
In fact, the canonical injection map $i: A\to E$ is an alternative (co)algebra homomorphism if and only if $A$ is an alternative  sub(co)algebra of $E$.

\begin{definition}
Let ${A} $ be an alternative   algebra (alternative coalgebra, alternative  bialgebra), $E$  be an alternative algebra  (alternative coalgebra, alternative  bialgebra) such that
${A} $ is a subspace of $E$ and $V$ a complement of
${A} $ in $E$. For a linear map $\varphi: E \to E$ we
consider the diagram:
\begin{equation}\label{eq:ext1}
\xymatrix{
   0  \ar[r]^{} &A \ar[d]_{\id_A} \ar[r]^{i} & E \ar[d]_{\varphi} \ar[r]^{\pi} &V \ar[d]_{\id_V} \ar[r]^{} & 0 \\
   0 \ar[r]^{} & A \ar[r]^{i'} & {E} \ar[r]^{\pi'} & V \ar[r]^{} & 0.
   }
\end{equation}
where $\pi: E\to V$ are the canonical projection maps and $i: A\to E$ are the inclusion maps.
We say that $\varphi: E \to E$ \emph{stabilizes} ${A}$ if the left square of the diagram \eqref{eq:ext1} is  commutative.
Let $(E, \cdot)$ and $(E, \cdot')$ be two alternative  algebra (alternative coalgebra, alternative  bialgebra) structures on $E$. $(E, \cdot)$ and $(E, \cdot')$ are called \emph{equivalent}, and we denote this by $(E, \cdot) \equiv (E, \cdot')$, if there exists an alternative algebra (alternative coalgebra, alternative  bialgebra) isomorphism $\varphi: (E, \cdot)
\to (E, \cdot')$ which stabilizes ${A} $. Denote by $Extd(E, {A})$ ($CExtd(E, {A} )$, $BExtd(E, {A} )$) the set of equivalent classes of  alternative  algebra (alternative coalgebra, alternative  bialgebra) structures on $E$.
\end{definition}

\section{Braided alternative bialgebras}
In this section, we introduce  the concept of alternative Hopf bimodule and braided alternative bialgebra which will be used in the following sections.

\subsection{Alternative Hopf bimodule and braided alternative bialgebra}
\begin{definition}
Let $H$ be an alternative bialgebra. An alternative Hopf bimodule over $H$ is a space $V$ endowed with maps
\begin{align*}
&\trr: H\otimes V \to V, \quad \trl: V\otimes H \to V,\\
&\phi: V \to H \otimes V, \quad  \psi: V \to V\otimes H,
\end{align*}
denoted by
\begin{eqnarray*}
&& \trr (x \otimes v) = x \trr v, \quad \trl(v\otimes x) = v \triangleleft x, \\
&& \phi (v)=\sum v\lmoi\ot v\loo, \quad \psi (v) = \sum v\loo \ot v\lmi,
\end{eqnarray*}
such that $V$ is simultaneously a bimodule, a  bicomodule over $H$ and satisfying
 the following compatibility conditions
\begin{enumerate}
\item[(HM1)] $\phi(x\trr v)=v\loi\ot(x\trr v\loo)+v\loi\ot(v\loo\trl x)-x\lii\ot(v\trl x\li)-x v\loi\ot v\loo$,
\end{enumerate}
 \begin{enumerate}
\item[(HM2)]  $\psi(v\trl x)=(v\loo\trl x)\ot v\lmi+(v\loo\trl x)\ot v\loi-v\loo\ot x v\loi-(v\trl x\li)\ot x\lii$,
\end{enumerate}
\begin{enumerate}
\item[(HM3)]  $\psi(x\trr v)=(x\li\trr v)\ot x\lii+(x\lii\trr v)\ot x\li+v\loo\ot x v\lmi+v\loo\ot v\lmi x-(x\trr v\loo)\ot v\lmi$,
\item[(HM4)]  $\phi(v \trl x)=v\loi x\ot v\loo+v\lmi x\ot v\loo-v\lmi\ot(x\trr v\loo)+x\li\ot(v\trl x\lii)+x\li\ot(x\lii\trr v)$,
\end{enumerate}
\begin{enumerate}
\item[(HM5)] $\phi(x\trr v)+\tau\psi(x\trr v)=v\loi\ot(x\trr v\loo)+x v\lmi\ot v\loo+x\lii\ot (x\li\trr v)$,

\item[(HM6)] $\phi(v\trl x)+\tau\psi(v\trl x)=x\li\ot(v\trl x\lii)+v\loi x\ot v\loo+v\lmi\ot(v\loo\trl x)$,
\end{enumerate}
for all $x\in H$ and $v\in V$.
\end{definition}

We denote  the  category of  alternative Hopf bimodules over $H$ by ${}^{H}_{H}\mathcal{M}{}^{H}_{H}$.

\begin{definition} Let $H$  be an alternative bialgebra.
If $A$ be an alternative algebra and an alternative coalgebra in ${}^{H}_{H}\mathcal{M}{}^{H}_{H}$, we call $A$ be a \emph{braided alternative bialgebra} if the following conditions are satisfied
\begin{enumerate}
\item[(BB1)]
$\Delta_{A}(a b)=a_{1} b \otimes a_{2} +a\lii b\ot a\li-a\lii\ot b a\li+b\li\ot a b\lii +b\li\ot b\lii a-a b\li\ot b\lii\\
+(a\loi\trr b)\ot a\loo+(a\lmi\trr b)\ot a\loo-a\loo\ot (b\trl a\loi)\\
+b\loo\ot (a\trl b\lmi)+b\loo\ot(b\lmi\trr a)-(a\trl b\loi)\ot b\loo$,
\end{enumerate}
\begin{enumerate}
\item[(BB2)]
$\Delta_{A}(b a)+\tau\Delta_{A}(b a)=a\li\ot b a\lii+b a\lii\ot a\li+b\li a\ot b\lii+b\lii\ot b\li a\\
 +a\loo\ot(b\trl a\lmi)+(b\trl a\lmi)\ot a\loo+b\loo\ot(b\loi\trr a)+(b\loi\trr a)\ot b\loo$.
\end{enumerate}
\end{definition}

Here we say $A$ to be an alternative algebra and an alternative coalgebra in ${}^{H}_{H}\mathcal{M}{}^{H}_{H}$ means that $A$ is simultaneously an $H$-bimodule alternative algebra (alternative coalgebra) and $H$-bicomodule alternative algebra (alternative coalgebra).

Now we construct  alternative bialgebra from braided alternative  bialgebra.
Let $H$  be an alternative bialgebra, $A$ be an  alternative algebra and an alternative  coalgebra in ${}^{H}_{H}\mathcal{M}{}^{H}_{H}$.
We define the multiplication and comultiplication on the direct sum vector space $E: =A \oplus H$ by
\begin{eqnarray}
\label{sp01}
&&(a+ x)(b+ y): =a b+x\trr b+a \trl y+ x y, \\
\label{sp02}
&&\Delta_{E}(a+ x): =\Delta_{A}(a)+\phi(a)+\psi(a)+\Delta_{H}(x).
\end{eqnarray}
This is called biproduct of ${A}$ and ${H}$ which will be  denoted   by $A\lbiprod H$.

\begin{theorem} \label{thm-main0}
Let  $H$ be an alternative bialgebra.
Then the biproduct $A\lbiprod H$ forms an alternative bialgebra if and only if  $A$ is a braided alternative bialgebra in ${}^{H}_{H}\mathcal{M}{}^{H}_{H}$.
\end{theorem}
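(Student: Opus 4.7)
The plan is to verify, piece by piece, that each defining identity of an alternative bialgebra on $E = A \oplus H$ (with structure maps \eqref{sp01} and \eqref{sp02}) is equivalent to the conjunction of three things: (a) the corresponding identity inside $H$, which is automatic because $H$ is an alternative bialgebra; (b) the bimodule, bicomodule, bimodule-algebra, and bicomodule-coalgebra axioms for $A$, which are built into the assumption $A\in{}^H_H\mathcal{M}{}^H_H$ as an alternative (co)algebra therein; and (c) the extra axioms (HM1)--(HM6) together with (BB1)--(BB2). Because every step of the verification is a genuine ``if and only if'', the two directions of the theorem will be settled simultaneously: starting from a given alternative bialgebra structure on $E$, projecting each identity onto its homogeneous components reconstructs the braided alternative bialgebra axioms on $A$, and conversely those axioms reassemble into the alternative bialgebra structure on $E$.

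The first two stages are a routine case analysis by the location of the arguments. For the alternative algebra identities \eqref{eq:LB2}, \eqref{eq:LB3} applied to $a+x,\,b+y,\,c+z \in E$, the product \eqref{sp01} expands into eight trilinear summands that I would group into four cases: all arguments in $H$ (controlled by $H$ being alternative), all in $A$ (controlled by $A$ being alternative), two in $H$ and one in $A$ (controlled by the $H$-bimodule axioms \eqref{(10)}--\eqref{(13)}), and one in $H$ and two in $A$ (controlled by the four bimodule-algebra conditions listed after them). Dually, feeding the coproduct \eqref{sp02} into the coalgebra identities \eqref{eq:LB4}, \eqref{eq:LB5} and splitting the resulting triple tensors by target component $H^{\otimes 3}$, $H^{\otimes 2}\otimes A$, $H\otimes A^{\otimes 2}$, $A^{\otimes 3}$, yields exactly the bicomodule axioms together with the bicomodule-coalgebra conditions.

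The main work is in the bialgebra compatibilities \eqref{eq:LB6}, \eqref{eq:LB7} for $\Delta_E$ applied to $(a+x)(b+y)$. Expansion via \eqref{sp01} yields four product terms $ab$, $x\trr b$, $a\trl y$, $xy$, whose comultiplications under \eqref{sp02} land in the four summands $A\ot A$, $H\ot A$, $A\ot H$, and $H\ot H$. On the right-hand side, expanding the tensor ad-actions of \eqref{eq:LB6}, \eqref{eq:LB7} through \eqref{sp01}--\eqref{sp02} and splitting by the same bigrading, the $H\ot H$ contributions are precisely the bialgebra identities for $H$, the $A\ot A$ contributions are exactly (BB1) and (BB2) applied to $ab$ and $ba$, and the mixed $H\ot A$ and $A\ot H$ contributions match term-by-term with (HM1)--(HM6) evaluated on $x\trr b$ and $a\trl y$. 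The principal obstacle is organisational rather than conceptual: one must track carefully which Sweedler index records the $H$-(co)module structure and which records the internal (co)multiplication of $A$ or $H$, and verify that the six seemingly redundant Hopf-bimodule axioms (HM1)--(HM6) correspond exactly to the splittings forced by the two separate bialgebra identities together with the action of the flip $\tau$ on the mixed components.
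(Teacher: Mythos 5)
Your proposal follows essentially the same route as the paper's proof: expand the multiplication and comultiplication on $E=A\oplus H$, split each defining identity according to the $A$/$H$ bigrading of arguments and targets, and match every homogeneous component with one of the axioms, so that the equivalence is an honest chain of ``if and only if'' statements. The only slight bookkeeping difference is that in the paper the module--coalgebra conditions (compatibility of $\Delta_A$ with $\trr,\trl$) and the comodule--algebra conditions (compatibility of $\phi,\psi$ with the product of $A$) also emerge from the bialgebra compatibility computation — they are the paper's conditions (6)--(9) and (11)--(14), sitting in the $A\ot A$ and mixed components alongside (HM1)--(HM6) and (BB1)--(BB2) — rather than from the pure (co)associator identities, but this does not affect the substance of your argument.
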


\begin{proof}
 It is easy to prove that $A\lbiprod H$ is an alternative algebra and an alternative coalgebra with the multiplication \eqref{sp01} and comultiplication  \eqref{sp02} .
Now we show the compatibility conditions:
$$
\begin{aligned}
&\Delta_{E}((a+x)(b+y))\\
=&\Delta_{E} (a+x)\bullet (b+y)+\tau\Delta_{E} (a+x)\bullet (b+y)\\
&- (b+y)\bullet\tau\Delta_{E}(a+x)+(a+x)\bullet \Delta_{E}(b+y)\\
&+[\Delta_{E}(b+y), (a+x)],
 \end{aligned}
$$
 $$
\begin{aligned}
&\Delta_{E}((b+y)(a+x))+\tau\Delta_{E}((b+y)(a+x))\\
=&(b+y)\bullet \Delta_{E} (a+x)+(b+y)\cdot\tau\Delta_{E} (a+x)\\
&+ \Delta_{E}(b+y)\bullet (a+x)+\tau \Delta_{E}(b+y)\cdot (a+x).
 \end{aligned}
$$
By direct computations, for the first equation, the left hand side is equal to
$$\begin{aligned}
&\Delta_{E}((a+x)(b+y))\\
=&\Delta_E(a b+x \trr b+a \trl y+ x y)\\
=&\Delta_A(a b)+\Delta_A(x \trr b)+\Delta_A(a \trl y)+\phi(a b)+\phi(x \trr b)+\phi(a \trl y)\\
&+\psi(a b)+\psi(x \trr b)+\psi(a \trl y)+\Delta_{H}(x y),
\end{aligned}
$$
and the right hand side is equal to
\begin{eqnarray*}
&&\Delta_{E} (a+x)\bullet (b+y)+\tau\Delta_{E} (a+x)\bullet (b+y)- (b+y)\bullet\tau\Delta_{E}(a+x)\\
&&+(a+x)\bullet \Delta_{E}(b+y)+[\Delta_{E}(b+y), (a+x)]\\
&=&\left(a_{1} \otimes a_{2}+a_{(-1)} \otimes a_{(0)}+a_{(0)} \otimes a_{(1)}+x_{1} \otimes x_{2}\right) \bullet(b+ y)\\
&&+\left(a_{2} \otimes a_{1}+a_{(0)} \otimes a_{(-1)}+a_{(1)} \otimes a_{(0)}+x_{2} \otimes x_{1}\right) \bullet(b+ y)\\
&&-(b+y)\bullet\left(a_{2} \otimes a_{1}+a_{(0)} \otimes a_{(-1)}+a_{(1)} \otimes a_{(0)}+x_{2} \otimes x_{1}\right)\\
&&+(a+ x) \bullet\left(b_{1} \otimes b_{2}+b_{(-1)} \otimes b_{(0)}+b_{(0)} \otimes b_{(1)}+y_{1} \otimes y_{2}\right)\\
&&+[b_{1} \otimes b_{2}+b_{(-1)} \otimes b_{(0)}+b_{(0)} \otimes b_{(1)}+y_{1} \otimes y_{2}, a+x]\\
&=&a_{1}b \otimes a_{2}+(a\li\trl y)\ot a\lii+(a\loi \trr b)\ot a\loo +a\loi y\ot a\loo \\
&&+ a\loo b\ot a\lmi+(a\loo\trl y)\ot a\lmi+(x\li\trr b)\ot x\lii+x\li y\ot x\lii\\
&&+a\lii b\ot a\li+(a\lii\trl y)\ot a\li+a\loo b\ot a\loi+(a\loo\trl y)\ot a\loi\\
&&+(a\lmi\trr b)\ot a\loo+a\lmi y\ot a\loo+(x\lii\trr b)\ot x\li+x\lii y\ot x\li\\
&&-a\lii\ot b a\li-a\lii\ot (y\trr a\li)-a\loo\ot(b\trl a\loi)-a\loo\ot ya\loi\\
&&-a\lmi\ot ba\loo-a\lmi\ot(y\trr a\loo)-x\lii\ot(b\trl x\li)-x\lii\ot yx\li\\
&&+b\li\ot a b\lii+b\li\ot (x\trr b\lii)+b\loi\ot a b\loo+b\loi\ot (x\trr b\loo)\\
&&+b\loo\ot (a\trl b\lmi)+b\loo\ot x b\lmi+y\li\ot (a\trl y\lii)+y\li\ot x y\lii\\
&&+b\li\ot b\lii a+b\li\ot (b\lii\trl x)-a b\li\ot b\lii-(x\trr b\li)\ot b\lii\\
&&+b\loi\ot b\loo a+b\loi\ot(b\loo\trl x)-(a\trl b\loi)\ot b\loo-x b\loi\ot b\loo\\
&&+b\loo\ot (b\lmi\trr a)+b\loo\ot b\lmi x-a b\loo\ot b\lmi-(x\trr b\loo)\ot b\lmi\\
&&+y\li\ot (y\lii\trr a)+y\li\ot y\lii x-(a\trl y\li)\ot y\lii-x y\li\ot y\lii.
\end{eqnarray*}
Thus the two sides are equal to each other if and only if

(1) $\Delta_{A}(a b)=a_{1} b \otimes a_{2} +a\lii b\ot a\li-a\lii\ot b a\li+b\li\ot a b\lii +b\li\ot b\lii a-a b\li\ot b\lii$

$\qquad+(a\loi\trr b)\ot a\loo+(a\lmi\trr b)\ot a\loo-a\loo\ot (b\trl a\loi)$

$\qquad+b\loo\ot (a\trl b\lmi)+b\loo\ot(b\lmi\trr a)-(a\trl b\loi)\ot b\loo$,

(2) $\phi(a \trl y)=a\loi y\ot a\loo+a\lmi y\ot a\loo-a\lmi\ot(y\trr a\loo)+y\li\ot(a\trl y\lii)+y\li\ot(y\lii\trr a)$,

(3) $\phi(x\trr b)=-x\lii\ot(b\trl x\li)+b\loi\ot(x\trr b\loo)+b\loi\ot(b\loo\trl x)-x b\loi\ot b\loo$,

(4) $\psi(x\trr b)=(x\li\trr b)\ot x\lii+(x\lii\trr b)\ot x\li+b\loo\ot x b\lmi+b\loo\ot b\lmi x-(x\trr b\loo)\ot b\lmi$,

(5) $\psi(a\trl y)=(a\loo\trl y)\ot a\lmi+(a\loo\trl y)\ot a\loi-a\loo\ot y a\loi-(a\trl y\li)\ot y\lii$,

(6) $\Delta_{A}(x \trr b)=b\li\ot(x\trr b\lii)+b\li\ot (b\lii\trl x)-(x\trr b\li)\ot b\lii$,

(7) $\Delta_{A}(a\trl y)=(a\li\trl y)\ot a\lii+(a\lii\trl y)\ot a\li-a\lii\ot(y\trr a\li)$,

(8) $\phi(ab)=b\loi\ot a b\loo+b\loi\ot b\loo a-a\lmi\ot b a\loo$,

(9) $\psi(a b)=a\loo b\ot a\lmi+a\loo b\ot a\loi-a b\loo\ot b\lmi$.

For the second equation, the left hand side is equal to
\begin{eqnarray*}
&&\Delta_{E}((b+y)(a+x))+\tau\Delta_{E}((b+y)(a+x))\\
&=&\Delta_E( b a+y \trr a+b \trl x+  y x)+\tau\Delta_E( b a+y \trr a+b \trl x+  y x)\\
&=&\Delta_A( ba)+\Delta_A(y \trr a)+\Delta_A(b \trl x)+\phi(b a)+\phi(y \trr a)+\phi(b \trl x)\\
&&+\psi(b a)+\psi(y \trr a)+\psi(b \trl x)+\Delta_{H}(yx)+\tau\Delta_A( ba)+\tau\Delta_A(y \trr a)\\
&&+\tau\Delta_A(b \trl x)+\tau\phi(b a)+\tau\phi(y \trr a)+\tau\phi(b \trl x)+\tau\psi(b a)+\tau\psi(y \trr a)\\
&&+\tau\psi(b \trl x)+\tau\Delta_{H}(yx),
\end{eqnarray*}
and the right hand side is equal to
\begin{eqnarray*}
&&(b+y)\bullet \Delta_{E} (a+x)+(b+y)\cdot\tau\Delta_{E} (a+x)\\
&&+ \Delta_{E}(b+y)\bullet (a+x)+\tau \Delta_{E}(b+y)\cdot (a+x)\\
&=&(b+y)\bullet\left(a_{1} \otimes a_{2}+a_{(-1)} \otimes a_{(0)}+a_{(0)} \otimes a_{(1)}+x_{1} \otimes x_{2}\right)\\
&&+(b+y)\cdot\left(a_{2} \otimes a_{1}+a_{(0)} \otimes a_{(-1)}+a_{(1)} \otimes a_{(0)}+x_{2} \otimes x_{1}\right)\\
&&+\left(b_{1} \otimes b_{2}+b_{(-1)} \otimes b_{(0)}+b_{(0)} \otimes b_{(1)}+y_{1} \otimes y_{2}\right)\bullet(a+x)\\
&&+(b\lii\ot b\li+b\loo\ot b\loi+b\lmi\ot b\loo+y\lii\ot y\li)\cdot (a+x)\\
&=&a\li\ot b a\lii+a\li\ot(y\trr a\lii)+a\loi\ot b a\loo+a\loi\ot (y\trr a\loo)\\
&&+a\loo\ot(b\trl a\lmi)+a\loo\ot y a\lmi+x\li\ot(b\trl x\lii)+x\li\ot y x\lii\\
&&+b a\lii\ot a\li+(y\trr a\lii)\ot a\li+b a\loo\ot a\loi+(y\trr a\loo)\ot a\loi\\
&&+(b\trl a\lmi)\ot a\loo+y a\lmi\ot a\loo+(b\trl x\lii)\ot x\li+y x\lii\ot x\li\\
&&+b\li a\ot b\lii+(b\li\trl x)\ot b\lii+(b\loi\trr a)\ot b\loo+b\loi x\ot b\loo\\
&&+b\loo a\ot b\lmi+(b\loo\trl x)\ot b\lmi+(y\li\trr a)\ot y\lii+y\li x\ot y\lii\\
&&+b\lii\ot b\li a+b\lii\ot (b\li\trl x)+b\loo\ot(b\loi\trr a)+b\loo\ot b\loi x\\
&&+b\lmi\ot b\loo a+b\lmi\ot(b\loo\trl x)+y\lii\ot(y\li\trr a)+y\lii\ot y\li x.
\end{eqnarray*}
Then the two sides are equal to each other if and only if satisfying the following conditions

(10)  $\Delta_{A}(b a)+\tau\Delta_{A}(b a)=a\li\ot b a\lii+b a\lii\ot a\li+b\li a\ot b\lii+b\lii\ot b\li a$

 $\qquad+a\loo\ot(b\trl a\lmi)+(b\trl a\lmi)\ot a\loo+b\loo\ot(b\loi\trr a)+(b\loi\trr a)\ot b\loo$,

(11) $\Delta_{A}(y\trr a)+\tau\Delta_{A}(y\trr a)=a\li\ot (y\trr a\lii)+(y\trr a\lii)\ot a\li$,

(12) $\Delta_{A}(b\trl x)+\tau\Delta_{A}(b\trl x)=b\lii\ot(b\li\trl x)+(b\li\trl x)\ot b\lii$,

(13) $\phi(b a)+\tau\psi(b a)=a\loi\ot b a\loo+b\lmi\ot b\loo a$,

(14) $\psi(ba)+\tau\phi(b a)=b a\loo\ot a\loi+b\loo a\ot b\lmi$,

(15) $\phi(b\trl x)+\tau\psi(b\trl x)=x\li\ot(b\trl x\lii)+b\loi x\ot b\loo+b\lmi\ot(b\loo\trl x)$,

(16) $\phi(y\trr a)+\tau\psi(y\trr a)=a\loi\ot(y\trr a\loo)+y a\lmi\ot a\loo+y\lii\ot (y\li\trr a)$,

(17) $\psi(y\trr a)+\tau\phi(y\trr a)=a\loo\ot y a\lmi+(y\trr a\loo)\ot a\loi+(y\li\trr a)\ot y\lii$,

(18) $\psi(b\trl x)+\tau\phi(b\trl x)=(b\trl x\lii)\ot x\li+(b\loo\trl x)\ot b\lmi+b\loo\ot b\loi x$.

\noindent From (6)--(9) and(11)--(14) we have that $A$ is an alternative algebra and an alternative coalgebra in ${}^{H}_{H}\mathcal{M}{}^{H}_{H}$,
from (2)--(5) and (15)--(18) we get that $A$ is an alternative Hopf bimodule over $H$, and (1) together with (10) are the conditions for $A$ to be a braided alternative bialgebra.
The proof is completed.
\end{proof}

\subsection{From quasitriangular alternative bialgebra to braided  alternative bialgebra}

Let $A$ be an alternative algebra and $M$ an $A$-bimodule. Given an element $r=$ $\sum_{i} u_{i} \otimes v_{i} \in A \otimes A$, there is a derivation $\Delta_{r}: A \rightarrow M$ associated to each element $r \in M$ as follows:
\begin{equation}\label{eq:deltar}
\Delta_{r}(a)=a \star r-r  \star a= u_i a\ot v_i-u_i\ot a v_i.
\end{equation}
Denote by
$$
r_{12}=\sum_{i} u_{i} \otimes v_{i} \otimes 1, \quad r_{13}=\sum_{i} u_{i} \otimes 1 \otimes v_{i}, \quad r_{23}=\sum_{i} 1 \otimes u_{i} \otimes v_{i}.
$$

\begin{lemma}(\cite{Gon})
Let $(A, \cdot )$ be an alternative  algebra and $ {r}\in A\otimes A$. Let $\Delta:A\rightarrow A\otimes A$
be a linear map defined by Eq.~\eqref{eq:deltar}.
Assume that $ {r}$ is  skew-symmetric and $ {r}$ satisfies
\begin{equation}\label{eqYBE}
 {r}_{{23}} {r}_{{12}}- {r}_{{12}} {r}_{{13}}-{r}_{{13}} {r}_{{23}}=0,
\end{equation}
then $(A, \cdot, \Delta_r)$ is an alternative  bialgebra.
\end{lemma}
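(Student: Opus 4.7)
The plan is to verify separately the two blocks of axioms in Definition \ref{dfnlb}: the co-alternative identities \eqref{eq:LB4}--\eqref{eq:LB5} for $\Delta_r$, and the compatibility conditions \eqref{eq:LB6}--\eqref{eq:LB7} between $\cdot$ and $\Delta_r$.

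First I would show that $(A,\Delta_r)$ is an alternative coalgebra. Writing $r=\sum_i u_i\ot v_i$ and expanding
\[
(\Delta_r\ot\id)\Delta_r(a) - (\id\ot\Delta_r)\Delta_r(a)
\]
produces a combination of triple tensors of the form $u_ju_i\, a\ot v_j\ot v_i$, $u_j\ot (u_i a)\, v_j\ot v_i$, $u_j\ot u_i\ot a(v_j v_i)$ and similar. Using the skew-symmetry of $r$ to swap indices in selected terms, together with the left and right alternative identities \eqref{eq:LB2}--\eqref{eq:LB3} applied slot by slot, this combination can be rewritten in terms of the three placements $r_{23}r_{12}$, $r_{12}r_{13}$, $r_{13}r_{23}$ acting on $a$. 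The YBE hypothesis \eqref{eqYBE} then annihilates this expression modulo terms that are manifestly $\tau_{12}$- and $\tau_{23}$-antisymmetric, which is exactly what \eqref{eq:LB4}--\eqref{eq:LB5} require.

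Second, I would check the compatibility conditions \eqref{eq:LB6}--\eqref{eq:LB7}, or equivalently \eqref{eq:LB8}--\eqref{eq:LB9}. Expanding
\[
\Delta_r(ab) = \sum_i\bigl(u_i(ab)\ot v_i - u_i\ot (ab)v_i\bigr)
\]
and similarly expanding each bullet-product on the right-hand side of \eqref{eq:LB8}, the comparison reduces to collections of associator-valued expressions such as $(u_i a)b-u_i(ab)$ and $(ab)v_i - a(bv_i)$. Each of these is controlled by \eqref{eq:LB2} or \eqref{eq:LB3}, and the skew-symmetry $\sum_i u_i\ot v_i = -\sum_i v_i\ot u_i$ is used to reorganize the associators so that the left- and right-alternative contributions pair up and cancel. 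For \eqref{eq:LB7}, the symmetrizer $\id+\tau$ applied to $\Delta_r(ba)$ is essential: combined with the skew-symmetry of $r$, it turns the residual tensor into precisely the bullet-products appearing on the right-hand side of \eqref{eq:LB9}.

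The main obstacle is purely the bookkeeping: each application of \eqref{eq:LB2} or \eqref{eq:LB3} produces two associators with opposite signs, and when combined with the swap $u_i\leftrightarrow v_i$ coming from skew-symmetry one must track indices carefully to achieve the cancellations. Conceptually, however, the proof is the alternative-algebraic analogue of the classical quasitriangular construction: only the co-alternative axioms genuinely use the YBE, while the bialgebra compatibility follows from $\Delta_r$ being a principal derivation together with the alternative identities alone.
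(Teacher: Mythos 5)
The paper does not actually prove this lemma: it is quoted from Goncharov's paper \cite{Gon} with a citation and no argument, so there is no in-paper proof to compare yours against. Judged on its own terms, your outline has the right architecture, and its one genuinely nontrivial structural claim — that the compatibility conditions \eqref{eq:LB6}--\eqref{eq:LB7} use only skew-symmetry of $r$ and the alternative identities, while the Yang--Baxter equation is needed only for the coalgebra axioms \eqref{eq:LB4}--\eqref{eq:LB5} — is correct. Indeed, expanding the right-hand side of \eqref{eq:LB6} with $\Delta_r(a)=\sum_i u_ia\ot v_i-u_i\ot av_i$ and using $\sum_i u_i\ot v_i=-\sum_i v_i\ot u_i$ to cancel the cross terms, the identity reduces in the first tensor slot to $(u_ia)b-u_i(ab)+(au_i)b-a(u_ib)=0$, which is exactly \eqref{eq:LB2}, and in the second slot to $(a,b,v_i)=(b,v_i,a)$, which holds because the associator of an alternative algebra is alternating; the check of \eqref{eq:LB7} reduces the same way to $(b,a,u_i)=(u_i,b,a)$ under the symmetrizer $\id+\tau$. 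So that half of your plan genuinely closes.

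The weaker half is the coalgebra part, which you assert rather than carry out: you claim the co-associator of $\Delta_r$ "can be rewritten" in terms of the three placements $r_{23}r_{12}$, $r_{12}r_{13}$, $r_{13}r_{23}$ acting on $a$ and that the YBE kills it modulo manifestly antisymmetric terms, but you give no indication of what the residual antisymmetric terms are or why they match the precise signed symmetries demanded by \eqref{eq:LB4}--\eqref{eq:LB5}. This is where all the real bookkeeping lives, and it is the only place the hypothesis \eqref{eqYBE} enters, so a complete proof must display that computation. A cleaner route to organize it is the one the paper itself takes in the lemma immediately following: first establish $(\Delta_r\ot\id)(r)=-r_{13}r_{23}$ and $(\id\ot\Delta_r)(r)=r_{12}r_{13}$ from skew-symmetry and \eqref{eqYBE}, and then express $(\Delta_r\ot\id)\Delta_r(a)-(\id\ot\Delta_r)\Delta_r(a)$ through the bimodule actions of $a$ on these tensors; without some such intermediate identity your step from "a combination of triple tensors" to "the three placements of $r$ acting on $a$" is a gap, not a proof.
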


The equation \eqref{eqYBE} is called the alternative Yang-Baxter equation (AYB).
A quasitriangular alternative bialgebra is a pair $(A, r)$ where $A$ is an alternative algebra and $r \in A \otimes A$ is a solution to (AYB).

\begin{lemma}
Let $(A, \cdot, r )$ be a quasitriangular alternative  bialgebra of $\Delta: =\Delta_r$. Then
\begin{equation}\label{r11}
(\Delta\ot \id)(r)=-r_{13}r_{23},
\end{equation}
\begin{equation}\label{r12}
(\id\ot\Delta)(r)=r_{12}r_{13}.
\end{equation}
\end{lemma}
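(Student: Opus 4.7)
The plan is a direct computation using the explicit formula for $\Delta_r$ together with the alternative Yang--Baxter equation. Writing $r=\sum_i u_i\otimes v_i$ and (with a second dummy index) $r=\sum_j u_j\otimes v_j$, I first expand the left--hand sides by applying $\Delta_r$ to a single tensor factor, and then expand the right--hand sides $r_{13}r_{23}$ and $r_{12}r_{13}$ componentwise in $A^{\otimes 3}$; the identity then amounts to recognising both sides in the AYB relation $r_{23}r_{12}-r_{12}r_{13}-r_{13}r_{23}=0$.

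Concretely, for the first identity I apply $\Delta\otimes\id$ to $r$ and substitute $\Delta(u_i)=\sum_j(u_ju_i\otimes v_j - u_j\otimes u_iv_j)$ to get
\begin{equation*}
(\Delta\otimes\id)(r)=\sum_{i,j}u_ju_i\otimes v_j\otimes v_i-\sum_{i,j}u_j\otimes u_iv_j\otimes v_i.
\end{equation*}
On the other hand, a componentwise calculation gives $r_{12}r_{13}=\sum_{k,l}u_ku_l\otimes v_k\otimes v_l$, $r_{23}r_{12}=\sum_{k,l}u_k\otimes u_lv_k\otimes v_l$ and $r_{13}r_{23}=\sum_{k,l}u_k\otimes u_l\otimes v_kv_l$. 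Rewriting AYB as $r_{12}r_{13}=r_{23}r_{12}-r_{13}r_{23}$ and substituting into the first term of the expression above (after relabelling $k\to j$, $l\to i$) exactly cancels the middle tensor--slot contribution, leaving $-\sum_{i,j}u_j\otimes u_i\otimes v_jv_i$, which is precisely $-r_{13}r_{23}$.

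For the second identity I proceed symmetrically: apply $\id\otimes\Delta$ to $r$ and substitute $\Delta(v_i)=\sum_j(u_jv_i\otimes v_j-u_j\otimes v_iv_j)$ to obtain
\begin{equation*}
(\id\otimes\Delta)(r)=\sum_{i,j}u_i\otimes u_jv_i\otimes v_j-\sum_{i,j}u_i\otimes u_j\otimes v_iv_j.
\end{equation*}
Relabelling the dummy indices in AYB as $k\to i$, $l\to j$ in the form $r_{23}r_{12}-r_{13}r_{23}=r_{12}r_{13}$ matches this expression with $r_{12}r_{13}$ on the nose.

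The main thing to be careful about is bookkeeping of the dummy indices $i,j$ versus $k,l$ and the componentwise product convention in $A^{\otimes 3}$; once these are pinned down, skew--symmetry of $r$ is not even required for these two formulas, and both identities follow by a single substitution of AYB. No deeper obstacle arises, since the derivation definition $\Delta_r(a)=\sum_i(u_ia\otimes v_i - u_i\otimes av_i)$ converts each side into a sum of three--tensor monomials in $u_i,v_i,u_j,v_j$ that can be compared term by term.
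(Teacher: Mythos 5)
Your proposal is correct and follows essentially the same route as the paper: expand $(\Delta\ot\id)(r)$ and $(\id\ot\Delta)(r)$ using $\Delta_r(a)=\sum_i(u_ia\ot v_i-u_i\ot av_i)$, recognise the resulting three-tensors as $r_{12}r_{13}-r_{23}r_{12}$ and $r_{23}r_{12}-r_{13}r_{23}$ respectively, and apply the alternative Yang--Baxter equation. Your added observation that skew-symmetry of $r$ is not needed for these two formulas is accurate but does not change the argument.
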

\begin{proof}
By direct computations, we have
\begin{eqnarray*}
(\Delta\ot \id)(r)&=&\sum_{i} \Delta(u_i)\ot v_i=\sum_{ij} (u_j u_i\ot v_j-u_j \ot u_i v_j)\ot v_i\\
&=&\sum_{ij} (u_j u_i\ot v_j\ot v_i-u_j \ot u_i v_j\ot v_i)\\
&=&r_{12}r_{13}-r_{23}r_{12}\\
&=&-r_{13}r_{23},
\end{eqnarray*}
\begin{eqnarray*}
(\id\ot\Delta)(r)&=&\sum_{i} u_i\ot\Delta(v_i)=\sum_{ij} u_i\ot(u_j v_i\ot v_j-u_j \ot v_i v_j)\\
&=&\sum_{ij} (u_i\ot u_j v_i\ot v_j-u_i\ot u_j \ot v_i v_j)\\
&=&r_{23}r_{12}-r_{13}r_{23}\\
&=&r_{12}r_{13}.
\end{eqnarray*}
Thus the above equality \eqref{r11} and \eqref{r12} hold.
\end{proof}

\begin{theorem}
 Let $(A, \cdot, r)$ be a quasitriangular  alternative bialgebra and $M$ an $A$-bimodule. Then $M$ becomes an  alternative Hopf bimodule over $A$ with the $\phi: M \rightarrow A \otimes M$ and $\psi: M \rightarrow M \otimes A$ given by
\begin{equation}
\phi(m): =-\sum_{i} u_{i} \otimes m\trl v_{i} ,\quad \psi(m): = \sum_{i} u_{i}\trr m \otimes v_{i}.
\end{equation}
\end{theorem}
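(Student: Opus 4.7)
The plan is to verify directly that $(M,\trr,\trl,\phi,\psi)$ satisfies the two structural requirements: (i) $M$ is an $A$-bicomodule, i.e.\ identities (14)--(17) hold; and (ii) the six Hopf-bimodule compatibility conditions (HM1)--(HM6) hold. The $A$-bimodule structure is part of the hypothesis, so those axioms are free. The two main computational inputs are the previous lemma giving $(\Delta\ot\id)(r)=-r_{13}r_{23}$ and $(\id\ot\Delta)(r)=r_{12}r_{13}$, together with the alternative Yang--Baxter equation $r_{23}r_{12}-r_{12}r_{13}-r_{13}r_{23}=0$ and skew-symmetry of $r$.

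For the bicomodule axioms (14)--(17), I would expand each side by plugging in $\phi(m)=-\sum_i u_i\ot(m\trl v_i)$ and $\psi(m)=\sum_i(u_i\trr m)\ot v_i$, and use $\Delta(u_i)=\sum_j(u_j u_i\ot v_j-u_j\ot u_i v_j)$. Each resulting expression reorganizes into a tensor in $A\ot A\ot M$ (or $A\ot M\ot A$, etc.) whose $A\ot A$ part is one of $r_{12}r_{13}$, $r_{23}r_{12}$, $r_{13}r_{23}$ acting on $m$ through the bimodule structure. Skew-symmetry of $r$ is used to pass between positions where $r$ appears as $\sum u_i\ot v_i$ or $\sum v_i\ot u_i$; then (AYB) collapses the differences to zero in each of the four identities.

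For the mixed conditions (HM1)--(HM6), the strategy is the same in spirit but uses the bimodule identities \eqref{(10)}--\eqref{(13)} in $M$. For instance, in (HM1) one rewrites $\phi(x\trr m)=-\sum_i u_i\ot((x\trr m)\trl v_i)$; the alternator axioms let this be manipulated into a form that matches $-x\lii\ot(m\trl x\li)+m\loi\ot(x\trr m\loo)+m\loi\ot(m\loo\trl x)-x m\loi\ot m\loo$ after substituting $\Delta(x)=\sum_i(u_i x\ot v_i-u_i\ot x v_i)$ and the definition of $\phi$. The symmetric conditions (HM5), (HM6) involving $\phi+\tau\psi$ will similarly reduce to an identity that follows from the alternator axioms on $M$ combined with (AYB) rewritten via skew-symmetry to match the twist.

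The main obstacle I anticipate is the careful bookkeeping of indices and the $\tau$-twists: the minus sign built into $\phi$ versus the plus sign in $\psi$ means that to match an axiom written in terms of $\phi$ and $\psi$ one must repeatedly invoke skew-symmetry of $r$ at exactly the right positions, and the alternative-algebra identities in $M$ are needed to combine nested actions such as $(u_i\trr m)\trl v_j$ with terms of the form $u_j\trr(m\trl v_i)$. I expect (HM5) and (HM6) to be the most delicate because they mix both coactions simultaneously, so one must apply both forms of (AYB) and both alternator identities for $M$ in one go; (HM1)--(HM4) should be essentially parallel to each other by symmetry between $\trr/\trl$ and $\phi/\psi$.
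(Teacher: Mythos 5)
Your proposal follows essentially the same route as the paper: a direct verification of the bicomodule axioms \eqref{(14)}--\eqref{(17)} and of (HM1)--(HM6), with the lemma $(\Delta\ot\id)(r)=-r_{13}r_{23}$, $(\id\ot\Delta)(r)=r_{12}r_{13}$, skew-symmetry of $r$ (to relabel $u_i\ot v_i\leftrightarrow -v_i\ot u_i$), and the bimodule identities \eqref{(10)}--\eqref{(13)} as the only inputs, exactly as in the paper's computations. The plan is sound and correctly anticipates where each ingredient is used; what remains is only the routine (if lengthy) index bookkeeping that the paper carries out term by term.
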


\begin{proof}
First we prove that $M$ is a bicomodule. By Definition 2.5, we just have to verify that equations \eqref{(14)}--\eqref{(17)} are true.

For the equation  \eqref{(14)}, the left hand side is equal to
\begin{eqnarray*}
&&(\Delta_{r} \ot \id _{A})\phi(m)-(\id _{A}\ot \phi)\phi(m)\\
&=&-\Delta_r(u_i)\ot m\trl v_i+u_i\ot \phi(m\trl v_i)\\
&=&u_i\ot u_j\ot m\trl (v_iv_j)-u_i\ot u_j\ot (m\trl v_i)\trl v_j\\
&=&u_i\ot u_j\ot (m\trl (v_iv_j)- (m\trl v_i)\trl v_j),
\end{eqnarray*}
and the right hand side is equal to
\begin{eqnarray*}
&&-\tau_{12}(\Delta_{r} \ot \id _{A})\phi(m)-(\id _{A}\ot \phi)\phi(m)\\
&=&-\tau_{12}(-\Delta_r(u_i)\ot v_i\trl m+u_i\ot \phi(m\trl v_i))\\
&=&-\tau_{12}(u_i\ot u_j\ot m\trl (v_iv_j)-u_i\ot u_j\ot (m\trl v_i)\trl v_j)\\
&=&-u_j\ot u_i\ot m\trl (v_iv_j)+u_j\ot u_i\ot (m\trl v_i)\trl v_j\\
&=&-u_i\ot u_j\ot m\trl (v_jv_i)+u_i\ot u_j\ot (m\trl v_j)\trl v_i\\
&=&u_i\ot u_j\ot(-m\trl (v_jv_i)+(m\trl v_j)\trl v_i)\\
&\overset{(13)}{=}&u_i\ot u_j\ot (m\trl (v_iv_j)- (m\trl v_i)\trl v_j).
\end{eqnarray*}
Thus the left and the right are equal  to each other.

For the equation  \eqref{(15)}, the left hand side is equal to
\begin{eqnarray*}
&&(\psi \otimes \id _{A})\psi(m)-(\id _{A}\ot \Delta_{r})\psi(m)\\
&=&\psi( u_i\trr m)\ot v_i-( u_i\trr m)\ot \Delta_r(v_i)\\
&=& u_j\trr (u_i\trr m)\ot v_j\ot v_i-(u_ju_i)\trr m\ot v_j\ot v_i\\
&=& (u_j\trr (u_i\trr m)-(u_ju_i)\trr m)\ot v_j\ot v_i,
\end{eqnarray*}
and the right hand side is equal to
\begin{eqnarray*}
&&-\tau_{12}((\phi\ot\id_{H})\psi(v)-(\id _{H} \otimes \psi) \phi(v))\\
&=&-\tau_{12}(\phi(u_i\trr m)\ot v_i+u_i\ot \psi(m\trl v_i))\\
&=&-\tau_{12}(-u_j\ot (u_i\trr m)\trl v_j\ot v_i+u_i\ot  u_j\trr (m\trl v_i)\ot v_j)\\
&=&  (u_i\trr m)\trl v_j\ot u_j\ot v_i-  u_j\trr (m\trl v_i)\ot u_i\ot v_j\\
&=&-(u_i\trr m)\trl u_j\ot v_j\ot v_i+  u_j\trr (m\trl u_i)\ot v_i\ot v_j\\
&=&-(u_i\trr m)\trl u_j\ot v_j\ot v_i+  u_i\trr (m\trl u_j)\ot v_j\ot v_i\\
&=&(-(u_i\trr m)\trl u_j+  u_i\trr (m\trl u_j))\ot v_j\ot v_i\\
&\overset{ \eqref{(11)}}{=}&(-m\trl(u_iu_j)+(m\trl u_i)\trl u_j)\ot v_j\ot v_i\\
&\overset{ \eqref{(13)}}{=}&(m\trl(u_ju_i)-(m\trl u_j)\trl u_i)\ot v_j\ot v_i\\
&\overset{ \eqref{(11)}}{=}&(-u_j\trr(m\trl u_i)+(u_j\trr m)\trl u_i)\ot v_j\ot v_i\\
&\overset{ \eqref{(12)}}{=}&(-(u_ju_i)\trr m+u_j\trr(u_i\trr m))\ot v_j\ot v_i.
\end{eqnarray*}
Thus the left and the right are equal  to each other.

For the equation  \eqref{(16)}, the left hand side is equal to
\begin{eqnarray*}
&&(\Delta_{r} \ot \id _{A})\phi(m)-(\id _{A}\ot \phi)\phi(m)\\
&=&-\Delta_r(u_i)\ot m\trl v_i+u_i\ot \phi(m\trl v_i)\\
&=&u_i\ot u_j\ot m\trl (v_iv_j)-u_i\ot u_j\ot (m\trl v_i)\trl v_j\\
&=&u_i\ot u_j\ot (m\trl (v_iv_j)- (m\trl v_i)\trl v_j),
\end{eqnarray*}
and the right hand side is equal to
\begin{eqnarray*}
&&-\tau_{23}((\phi\ot\id_{A})\psi(m)-(\id _{A} \otimes \psi) \phi(m))\\
&=&-\tau_{23}(\phi(u_i\trr m)\ot v_i+u_i\ot\psi(m\trl v_i))\\
&=&-\tau_{23}(-u_j\ot (u_i\trr m)\trl v_j\ot v_i+u_i\ot u_j\trr(m\trl v_i)\ot v_j)\\
&=& u_j\ot v_i\ot (u_i\trr m)\trl v_j-u_i\ot v_j\ot u_j\trr(m\trl v_i)\\
&=&u_i\ot v_j\ot (u_j\trr m)\trl v_i-u_i\ot v_j\ot u_j\trr(m\trl v_i)\\
&=&-u_i\ot u_j\ot (v_j\trr m)\trl v_i+u_i\ot u_j\ot v_j\trr(m\trl v_i)\\
&=&u_i\ot u_j\ot (-(v_j\trr m)\trl v_i+ v_j\trr(m\trl v_i))\\
&\overset{ \eqref{(12)}}{=}&u_i\ot u_j\ot ((v_jv_i)\trr m-v_j\trr(v_i\trr m))\\
&\overset{ \eqref{(10)}}{=}&u_i\ot u_j\ot (-(v_iv_j)\trr m+v_i\trr(v_j\trr m))\\
&\overset{ \eqref{(12)}}{=}&u_i\ot u_j\ot ((v_i\trr m)\trl v_j-v_i\trr(m\trl v_j))\\
&\overset{ \eqref{(11)}}{=}&u_i\ot u_j\ot (m\trl v_iv_j-(m\trl v_i)\trl v_j).
\end{eqnarray*}
Thus we obtain that the left and the right are equal  to each other.

For the equation  \eqref{(17)}, the left hand side is equal to
\begin{eqnarray*}
&&(\psi \otimes \id _{A})\psi(m)-(\id _{A}\ot \Delta_{r})\psi(m)\\
&=&\psi( u_i\trr m)\ot v_i-( u_i\trr m)\ot \Delta_r(v_i)\\
&=& u_j\trr (u_i\trr m)\ot v_j\ot v_i-(u_ju_i)\trr m\ot v_j\ot v_i\\
&=& (u_j\trr (u_i\trr m)-(u_ju_i)\trr m)\ot v_j\ot v_i,
\end{eqnarray*}
and the right hand side is equal to
\begin{eqnarray*}
&&-\tau_{23}((\psi\ot\id_{H})\psi(v)-(\id _{V} \otimes \Delta_{H}) \psi(v))\\
&=&-\tau_{23}(\psi( u_i\trr m)\ot v_i-( u_i\trr m)\ot \Delta_r(v_i))\\
&=&-\tau_{23}(u_j\trr (u_i\trr m)\ot v_j\ot v_i-(u_ju_i)\trr m\ot v_j\ot v_i)\\
&=&-u_j\trr (u_i\trr m)\ot v_i\ot v_j+(u_ju_i)\trr m\ot v_i\ot v_j\\
&=&-u_i\trr (u_j\trr m)\ot v_j\ot v_i+(u_iu_j)\trr m\ot v_j\ot v_i\\
&\overset{ \eqref{(10)}}{=}&(u_j\trr (u_i\trr m)-(u_ju_i)\trr m)\ot v_j\ot v_i.
\end{eqnarray*}
Thus we obtain that the left and the right are equal to each other.

Next we check that $M$ is an alternative Hopf bimodule as follows. By definition 3.1, we just have to verify equations (HM1)--(HM6) are true.

For the equation (HM1), the left hand side is equal to
\begin{eqnarray*}
&&\phi(x\trr m)=-u_i\ot (x\trr m)\trl v_i,
\end{eqnarray*}
the right hand side is equal to
\begin{eqnarray*}
&&-v_i\ot m\trl(u_ix)+xv_i\ot m\trl u_i-u_i\ot x\trr(m\trl v_i)-u_i\ot(m\trl v_i)\trl x+xu_i\ot m\trl v_i\\
&=&-v_i\ot m\trl(u_ix)-u_i\ot x\trr(m\trl v_i)-u_i\ot(m\trl v_i)\trl x-xu_i\ot m\trl v_i+xu_i\ot m\trl v_i\\
&=&-v_i\ot m\trl(u_ix)-u_i\ot x\trr(m\trl v_i)-u_i\ot(m\trl v_i)\trl x\\
&=&u_i\ot m\trl(v_ix)-u_i\ot x\trr(m\trl v_i)-u_i\ot(m\trl v_i)\trl x\\
&=&u_i\ot (m\trl(v_ix)- x\trr(m\trl v_i)-(m\trl v_i)\trl x)\\
&\overset{ \eqref{(11)}}{=}&u_i\ot (m\trl(v_ix)-(m\trl v_i)\trl x+m\trl(xv_i)-(m\trl x)\trl v_i-(x\trr m)\trl v_i)\\
&\overset{ \eqref{(13)}}{=}&-u_i\ot (x\trr m)\trl v_i.
\end{eqnarray*}

For the equation (HM2), the left hand side is equal to
\begin{eqnarray*}
&&\psi(m\trl x)=u_i\trr(m\trl x)\ot v_i,
\end{eqnarray*}
the right hand side is equal to
\begin{eqnarray*}
&&(u_i\trr m)\trl x\ot v_i-(m\trl v_i)\trl x\ot u_i+m\trl v_i\ot xu_i-m\trl(u_ix)\ot v_i+m\trl u_i\ot xv_i\\
&=&(u_i\trr m)\trl x\ot v_i-(m\trl v_i)\trl x\ot u_i-m\trl u_i\ot xv_i-m\trl(u_ix)\ot v_i+m\trl u_i\ot xv_i\\
&=&(u_i\trr m)\trl x\ot v_i-(m\trl v_i)\trl x\ot u_i-m\trl(u_ix)\ot v_i\\
&=&(u_i\trr m)\trl x\ot v_i+(m\trl u_i)\trl x\ot v_i-m\trl(u_ix)\ot v_i\\
&\overset{ \eqref{(11)}}{=}&((u_i\trr m)\trl x+(m\trl u_i)\trl x-(m\trl u_i)\trl x+u_i\trr(m\trl u_i)-(u_i\trr m)\trl x)\ot v_i\\
&=&u_i\trr(m\trl u_i)\ot v_i.
\end{eqnarray*}

For the equation (HM3), the left hand side is equal to
\begin{eqnarray*}
&&\psi(x\trr m)=u_i\trr(x\trr m)\ot v_i,
\end{eqnarray*}
the right hand side is equal to
\begin{eqnarray*}
&&(u_ix)\trr m\ot v_i-u_i\trr m\ot xv_i+v_i\trr m\ot u_ix-(xv_i)\trr m\ot u_i+u_i\trr m\ot xv_i\\
&&+u_i\trr m\ot v_ix-x\trr(u_i\trr m)\ot v_i\\
&=&(u_ix)\trr m\ot v_i+v_i\trr m\ot u_ix-(xv_i)\trr m\ot u_i-v_i\trr m\ot u_ix-x\trr(u_i\trr m)\ot v_i\\
&=&(u_ix)\trr m\ot v_i+(xu_i)\trr m\ot v_i-x\trr(u_i\trr m)\ot v_i\\
&=&((u_ix)\trr m+(xu_i)\trr m-x\trr(u_i\trr m))\ot v_i\\
&\overset{ \eqref{(10)}}{=}&u_i\trr(x\trr m)\ot v_i.
\end{eqnarray*}

For the equation (HM4), the left hand side is equal to
\begin{eqnarray*}
&&\phi(m\trl x)=-u_i\ot (m\trl x)\trl v_i,
\end{eqnarray*}
the right hand side is equal to
\begin{eqnarray*}
&&-u_ix\ot m\trl v_i+v_ix\ot u_i\trr m-v_i\ot x\trr(u_i\trr m)+u_ix\ot m\trl v_i-u_i\ot m\trl (xv_i)\\
&&+u_ix\ot v_i\trr m-u_i\ot(xv_i)\trr m\\
&=&v_ix\ot u_i\trr m-v_i\ot x\trr(u_i\trr m)-u_i\ot m\trl (xv_i)-v_ix\ot u_i\trr m-u_i\ot(xv_i)\trr m\\
&=&-v_i\ot x\trr(u_i\trr m)-u_i\ot m\trl (xv_i)-u_i\ot(xv_i)\trr m\\
&=&u_i\ot x\trr(v_i\trr m)-u_i\ot m\trl (xv_i)-u_i\ot(xv_i)\trr m\\
&=&u_i\ot (x\trr(v_i\trr m)- m\trl (xv_i)-(xv_i)\trr m)\\
&\overset{ \eqref{(12)}}{=}&u_i\ot (x\trr(v_i\trr m)-(m\trl x)\trl v_i+x\trr(m\trl v_i)-(x\trr m)\trl v_i-x\trr(v_i\trr m)\\
&&+(x\trr m)\trl v_i-x\trr(m\trl v_i))\\
&=&-u_i\ot (m\trl x)\trl v_i.
\end{eqnarray*}

For the equation (HM5), the left hand side is equal to
\begin{eqnarray*}
&&\phi(x\trr m)+\tau\psi(x\trr m)\\
&=&-u_i\ot (x\trr m)\trl v_i+v_i\ot u_i\trr(x\trr m)\\
&=&v_i\ot (x\trr m)\trl u_i+v_i\ot u_i\trr(x\trr m)\\
&=&v_i\ot ((x\trr m)\trl u_i+ u_i\trr(x\trr m)),
\end{eqnarray*}
the right hand side is equal to
\begin{eqnarray*}
&&-u_i\ot x\trr(m\trl v_i)+xv_i\ot u_i\trr m+v_i\ot (u_ix)\trr m-xv_i\ot u_i\trr m\\
&=&-u_i\ot x\trr(m\trl v_i)+v_i\ot (u_ix)\trr m\\
&=&v_i\ot x\trr(m\trl u_i)+v_i\ot (u_ix)\trr m\\
&\overset{ \eqref{(12)}}{=}&v_i\ot (x\trr(m\trl u_i)+ u_i\trr(x\trr m)-(u_i\trr m)\trl x+u_i\trr(m\trl x))\\
&\overset{ \eqref{(11)}}{=}&v_i\ot ((x\trr m)\trl u_i+ u_i\trr(x\trr m)+(m\trl x)\trl u_i-m\trl(xu_i)+(m\trl u_i)\trl x-m\trl(u_ix))\\
&\overset{ \eqref{(13)}}{=}&v_i\ot ((x\trr m)\trl u_i+ u_i\trr(x\trr m)).
\end{eqnarray*}

For the equation (HM6), the left hand side is equal to
\begin{eqnarray*}
&&\phi(m\trl x)+\tau\psi(m\trl x)\\
&=&-u_i\ot (m\trl x)\trl v_i+v_i\ot u_i\trr(m\trl x)\\
&=&v_i\ot (m\trl x)\trl u_i+v_i\ot u_i\trr(m\trl x)\\
&=&v_i\ot ((m\trl x)\trl u_i+ u_i\trr(m\trl x)),
\end{eqnarray*}
the right hand side is equal to
\begin{eqnarray*}
&&u_ix\ot m\trl v_i-u_i\ot m\trl(xv_i)-u_ix\ot m\trl v_i+v_i\ot (u_i\trr m)\trl x\\
&=&-u_i\ot m\trl(xv_i)+v_i\ot (u_i\trr m)\trl x\\
&=&v_i\ot (m\trl(xu_i)+ (u_i\trr m)\trl x)\\
&\overset{ \eqref{(11)}}{=}&v_i\ot ((m\trl x)\trl u_i+u_i\trr(m\trl x)+m\trl(xu_i)-(m\trl x)\trl u_i+ m\trl(u_ix)-(m\trl u_i)\trl x)\\
&\overset{ \eqref{(13)}}{=}&v_i\ot ((m\trl x)\trl u_i+u_i\trr(m\trl x)).
\end{eqnarray*}
Thus we obtain that the equations (HM1)--(HM6) are hold.
\end{proof}

\begin{theorem}\label{r-bialg2}
 Let $(A, \cdot, r)$ be a quasitriangular  alternative bialgebra. Then $A$ becomes a braided  alternative bialgebra over itself with $M=A$ and $\phi: M \rightarrow A \otimes M$ and $\psi: M \rightarrow M \otimes A$ are given by
 \begin{equation}
\phi(a): =-\sum_{i} u_{i} \otimes a v_{i} ,\quad \psi(a): = \sum_{i} u_{i}a \otimes v_{i}.
\end{equation}
\end{theorem}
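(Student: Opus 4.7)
The strategy is to apply the preceding theorem to the special case $M=A$, where $A$ is regarded as a bimodule over itself via the regular multiplication $x\trr a=xa$ and $a\trl x=ax$. That theorem immediately equips $A$ with an alternative Hopf bimodule structure over itself with the given coactions $\phi$ and $\psi$, so conditions (HM1)--(HM6) are already verified.

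Next, one must check that $A$ is an alternative algebra and coalgebra in ${}^A_A\mathcal{M}{}^A_A$. The bimodule-algebra compatibilities reduce to the alternative algebra axioms of $A$, hence are automatic. The bicomodule-coalgebra compatibilities, together with the ``mixed'' bimodule-coalgebra and bicomodule-algebra compatibilities, reduce after substitution of $\Delta_r(a)=u_ia\otimes v_i-u_i\otimes av_i$ and the given $\phi,\psi$ to identities that follow from a combination of alternativity \eqref{eq:LB2}--\eqref{eq:LB3}, skew-symmetry of $r$, and the AYB equation \eqref{eqYBE}. These are exactly the ingredients already used in the proof of the previous theorem and in Goncharov's lemma.

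Finally, one verifies the braided compatibilities (BB1) and (BB2). Goncharov's lemma does most of the work here: since $(A,\cdot,\Delta_r)$ is already an ordinary alternative bialgebra, the six untwisted terms on the right-hand side of (BB1) already sum to $\Delta_A(ab)$ by \eqref{eq:LB6}, so (BB1) reduces to the vanishing of the six remaining ``twisted'' terms. Upon substituting the explicit formulas $\phi(a)=-u_i\otimes av_i$, $\psi(a)=u_ia\otimes v_i$ and the regular actions, these six terms become
\[
-u_ib\otimes av_i+v_ib\otimes u_ia+av_i\otimes bu_i+u_ib\otimes av_i+u_ib\otimes v_ia+au_i\otimes bv_i,
\]
in which the first and fourth cancel outright, while the skew-symmetry identity $\sum u_i\otimes v_i=-\sum v_i\otimes u_i$ pairs the second with the fifth and the third with the sixth to produce zero. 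The argument for (BB2) is structurally identical: the four terms contributed by \eqref{eq:LB7} account for the untwisted part, and the four residual ``twisted'' terms cancel pairwise by skew-symmetry.

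The main obstacle is essentially bookkeeping: the many compatibility conditions have to be unfolded in Sweedler notation and the cancellations tracked carefully, but no new tools beyond alternativity, skew-symmetry of $r$, and AYB---exactly those already deployed in the preceding theorem and Goncharov's lemma---are required.
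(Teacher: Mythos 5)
Your proposal is correct and follows essentially the same route as the paper: the paper's own proof likewise delegates the Hopf-bimodule axioms to the preceding theorem and the untwisted part of (BB1)--(BB2) to Goncharov's lemma, and then verifies only that the six braided terms reduce, after substituting $\phi(a)=-u_i\otimes av_i$ and $\psi(a)=u_ia\otimes v_i$, to the very expression you wrote, cancelling by the same pairing (first with fourth directly, second with fifth and third with sixth via skew-symmetry of $r$). The analogous four-term cancellation for (BB2) also matches the paper exactly.
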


\begin{proof}For braided terms in (BB1), we have
\begin{eqnarray*}
&&(a\loi\trr b)\ot a\loo+(a\lmi\trr b)\ot a\loo-a\loo\ot (b\trl a\loi)+b\loo\ot (a\trl b\lmi)\\
&&+b\loo\ot(b\lmi\trr a)-(a\trl b\loi)\ot b\loo\\
&=&-u_ib\ot av_i+v_ib\ot u_ia+av_i\ot bu_i+u_ib\ot av_i+u_ib\ot v_ia+au_i\ot bv_i\\
&=&v_ib\ot u_ia+av_i\ot bu_i-v_ib\ot u_ia-av_i\ot bu_i\\
&=&0.
\end{eqnarray*}
For braided terms in (BB2), we have
\begin{eqnarray*}
&&a\loo\ot(b\trl a\lmi)+(b\trl a\lmi)\ot a\loo+b\loo\ot(b\loi\trr a)+(b\loi\trr a)\ot b\loo\\
&=&u_ia\ot bv_i+bv_i\ot u_ia-bv_i\ot u_ia-u_ia\ot bv_i\\
&=&0.
\end{eqnarray*}
Thus $(A, \cdot, r)$ be an (braided) alternative bialgebra in the category of alternative Hopf bimodule over itself.
\end{proof}

In the above Theorem \ref{r-bialg2}, what we have obtained is a braided   alternative bialgebra with zero braided terms.
It is an open question for us whether there exists a braided  alternative bialgebra with nonzero braided terms coming from  a quasitriangular  alternative bialgebra.

\subsection{From braided alternative bialgebras to braided Lie bialgebras}

We can construct Lie bialgebras from alternative bialgebras as follows.
\begin{definition}
Let $(A, \cdot, \Delta)$ be an alternative bialgebra. Then $(A, [,], \delta)$ is a Lie bialgebra by $[a, b]=ab-ba$ and $\delta(a)=\Delta(a)-\tau\Delta(a)$ for any $a, b\in A$ if and only if
\begin{eqnarray}
-a\lii b\ot a\li+a\lii\ot ba\li=b\li\ot b\lii a-ab\li\ot b\lii.
\end{eqnarray}
\end{definition}
\begin{proof}Denote $\delta(a)=a\bi\ot a\bii=a\li\ot a\lii-a\lii\ot a\li$ and $\Delta(a)=a\li\ot a\lii$. $(A, [,], \delta)$ is a Lie bialgebra
if the following identity holds for any $a, b\in A$
\begin{eqnarray*}
\delta ( [a, b ] )=&[ a, b_{[1]} ] \otimes  b_{[2]}+b\bi \otimes [ a, b_{[2]}]+ [a\bi,  b] \otimes  a_{[2]}+ a\bi \otimes [a_{[2]},  b].
\end{eqnarray*}
Then by the define above, we have the left hand side equal to
\begin{eqnarray*}
&&\delta ( [a, b ] )\\
&=&\Delta([a, b])-\tau\Delta([a, b])\\
&=&\Delta(ab)-\Delta(ba)-\tau\Delta(ab)+\tau\Delta(ba),
\end{eqnarray*}
and the right hand side equal to
\begin{eqnarray*}
&&[ a, b_{[1]} ] \otimes  b_{[2]}+b\bi \otimes [ a, b_{[2]}]+ [a\bi,  b] \otimes  a_{[2]}+ a\bi \otimes [a_{[2]},  b]\\
&=&ab_{[1]}\ot b_{[2]}-b_{[1]}a\ot b_{[2]}+b_{[1]}\ot ab_{[2]}-b_{[1]}\ot b_{[2]}a\\
&&+a_{[1]}b\ot a_{[2]}- b a_{[1]}\ot a_{[2]}+a_{[1]} \otimes a_{[2]}b-a_{[1]} \otimes ba_{[2]}\\
&=&ab\li\ot b\lii-ab\lii\ot b\li-b\li a\ot b\lii+b\lii a\ot b\li+b_{1}\ot ab_{2}-b_{2}\ot ab_{1}\\
&&-b_{1}\ot b_{2}a+b_{2}\ot b_{1}a+a_{1}b\ot a_{2}-a_{2}b\ot a_{1}- b a_{1}\ot a_{2}+b a_{2}\ot a_{1}\\
&&+a_{1} \otimes a_{2}b-a_{2} \otimes a_{1}b-a_{1} \otimes ba_{2}+a_{2} \otimes ba_{1}\\
&=&\Delta(ab)-\Delta(ba)-\tau\Delta(ab)+\tau\Delta(ba)\\
&&+3(ab\li\ot b\lii +b\lii a\ot b\li-b\lii\ot ab\li-b\li\ot b\lii a-a\lii b\ot a\li\\
&&+a\lii\ot ba\li-b a\li\ot a\lii+a\li\ot a\lii b)\\
&=&\Delta(ab)-\Delta(ba)-\tau\Delta(ab)+\tau\Delta(ba)\\
&&+6(-a\lii b\ot a\li+a\lii\ot ba\li-b\li\ot b\lii a+ab\li\ot b\lii)\\
&=&\Delta(ab)-\Delta(ba)-\tau\Delta(ab)+\tau\Delta(ba).
\end{eqnarray*}
Thus the proof is complicated.
\end{proof}

\begin{definition}Let $H$ be simultaneously a Lie alegbra and a Lie coalgebra.
 If $V$ is  a left $H$-Lie module and left $H$-Lie comodule,  and satisfying the following condition:
\begin{enumerate}
\item[] $\phi (x\trr v)= [\alpha_H(x), v_{(-1)}] \otimes v_{(0)}+v_{(-1)} \otimes x \trr v_{(0)}+x\li \otimes x_{2} \trr v,$
\end{enumerate}
\noindent then $(V,\al_V)$is called a left Yetter-Drinfeld  module over $H$.
\end{definition}
We denote  the  category of Yetter-Drinfeld  modules over $H$ by ${}^{H}_{H}\mathcal{M}$.

\begin{definition}(\cite{Som,Maj})
Let $H$ be a  Lie algebra and Lie coalgebra and $A$ is a left Yetter-Drinfeld module over $H$, we call $A$ a \emph{braided Lie
bialgebra} in ${}^{H}_{H}\mathcal{M}$, if the following condition is
satisfied
\begin{eqnarray}
 \notag\delta ( [a, b ] )&=& [ a, b_{[1]} ] \otimes  b\bii+y\bi \otimes [ a, b\bii]+ [a\bi,  b] \otimes  a\bii+ a\bi \otimes [a_{[2]},  b]\\
&&+(a_{[-1]} \trr b) \otimes  a_{[0]}+ b_{[0]}\otimes  (b_{[-1]} \trr  a)-  (b_{[-1]} \trr   a) \otimes  b_{[0]} - a_{[0]}\otimes  (a_{[-1]}\trr  b).
\end{eqnarray}
\end{definition}
Next, we will prove that we can also construct a braided Lie bialgebra from a braided  alternative bialgebra.

\begin{theorem}
Let $H$ be an alternative bialgebra. If $(A, ~\cdot, ~\Delta)$ is a braided alternative bialgebra in ${}^{H}_{H}\mathcal{M}{}^{H}_{H}$ . Define the commutative bracket $[-, -]=\mu\circ(\id-\tau)$ and the cocommutative cobracket $\delta=(\id-\tau)\circ\Delta$, then we have  $(A,~[-, -], ~\delta)$ is a braided Lie bialgebra if and only if
\begin{eqnarray}
\label{(44)}\notag -a\lii b\ot a\li+a\lii\ot ba\li-(a\lmi\trr b)\ot a\loo+a\loo\ot(b\trl a\loi)\\
=b\li\ot b\lii a-ab\li\ot b\lii+b\loo\ot(b\lmi\trr a)-(a\trl b\loi)\ot b\loo.
\end{eqnarray}
\end{theorem}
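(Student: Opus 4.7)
The plan is to mirror the argument that passes from an alternative bialgebra to a Lie bialgebra (stated just before this theorem), while tracking the additional braiding contributions that (BB1) and the braided co-Jacobi identity introduce.

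First I would expand the left-hand side. Writing $\delta=(\id-\tau)\circ\Delta$ and $[x,y]=xy-yx$ gives
\[
\delta([a,b])=\Delta(ab)-\tau\Delta(ab)-\Delta(ba)+\tau\Delta(ba),
\]
and I would then substitute $\Delta(ab)$ using (BB1) and $\Delta(ba)$ using (BB1) applied with $a$ and $b$ swapped; note that (BB1) alone determines $\Delta$ on any product, so (BB2) is not needed here. Each substitution contributes six ordinary and six braided summands, giving forty-eight explicit tensors on the left.

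Next I would expand the right-hand side of the braided co-Jacobi identity by replacing $\delta$ and $[-,-]$ with their definitions. The four commutator--comultiplication summands produce exactly the combination already analyzed in the non-braided case: after cancellation against the ordinary portion of the left-hand expansion, only a sixfold multiple of
\[
-a\lii b\ot a\li+a\lii\ot ba\li-b\li\ot b\lii a+ab\li\ot b\lii
\]
should survive, which is the non-braided half of (44). Similarly, the four braiding summands $(a\loi\trr b)\ot a\loo$, $b\loo\ot(b\loi\trr a)$, $-(b\loi\trr a)\ot b\loo$, $-a\loo\ot(a\loi\trr b)$ on the right, compared against the six braided summands inside each copy of (BB1) together with their $\tau$-twists, should collapse onto a sixfold multiple of
\[
-(a\lmi\trr b)\ot a\loo+a\loo\ot(b\trl a\loi)-b\loo\ot(b\lmi\trr a)+(a\trl b\loi)\ot b\loo,
\]
which is the braided half of (44). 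Since the characteristic is zero, the total residual vanishes for all $a,b\in A$ if and only if equation (44) holds, proving the theorem.

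The main obstacle will be purely combinatorial: there are forty-eight tensors on the left, twenty on the right, two flips, and the swap $a\leftrightarrow b$ to juggle. My organizing principle is to split each use of (BB1) into its ordinary six-tuple and its braided six-tuple and to process the two classes separately; within each class the summands come in opposite pairs (for instance $a\li b\ot a\lii$ versus $-ab\li\ot b\lii$, or $(a\loi\trr b)\ot a\loo$ versus $-(a\trl b\loi)\ot b\loo$), so pairs related by $a\leftrightarrow b$ and by $\tau$ should collapse onto exactly the four obstruction tensors appearing in each half of (44). Once this symmetry is exploited, the remaining verification is mechanical.
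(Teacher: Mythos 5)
Your overall strategy (expand both sides of the braided co-Jacobi identity and show that the discrepancy is a nonzero multiple of the expression in (44)) is the same as the paper's, but two of your specific claims break the argument.

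First, the assertion that ``(BB2) is not needed'' is false, and it is exactly where your final identification fails. If you expand $\Delta(ab)$ and $\Delta(ba)$ using (BB1) alone, the discrepancy between the two sides of the braided co-Jacobi identity comes out as $3(\id-\tau)(X)$, where $X$ denotes the difference of the two sides of (44) --- not as $6X$. The vanishing of $3(\id-\tau)(X)$ only says that $X$ is a \emph{symmetric} tensor, which is strictly weaker than $X=0$. To upgrade this to condition (44) as stated you must know that $X$ is automatically antisymmetric, i.e.\ $(\id+\tau)X=0$; and that identity is obtained precisely by comparing $(\id+\tau)\Delta(ab)$ computed via (BB1) with the expression supplied by (BB2). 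The paper's proof uses both axioms for exactly this reason: its expansion implicitly passes through the decomposition $\Delta-\tau\Delta=2\Delta-(\Delta+\tau\Delta)$, feeding the first piece into (BB1) and the second into (BB2), which is how the clean factor $6$ in front of the (44)-expression arises.

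Second, your count of ``twenty'' tensors on the right shows that you are not expanding the braided summands correctly. Before the braided Lie bialgebra condition for $A$ can even be stated, $A$ must be given a \emph{left} $H$-action and a \emph{left} $H$-coaction; the paper does this by setting $x\trr_{L}a=x\trr a-a\trl x$ and $\phi_{L}=\phi-\tau\psi$. Consequently each of the four braided summands on the right expands into four terms (sixteen in all, hence thirty-two terms on the right, not twenty), and it is only through the $\trl$- and $\psi$-contributions hidden in $\trr_{L}$ and $\phi_{L}$ that the terms $a\loo\ot(b\trl a\loi)$, $-(a\lmi\trr b)\ot a\loo$, $-(a\trl b\loi)\ot b\loo$ and $b\loo\ot(b\lmi\trr a)$ of (44) can appear at all. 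With only the four unexpanded summands you list, the braided bookkeeping cannot close up.
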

\begin{proof}Firstly, it is easy to get that $A$ is obviously a Lie algebra and a Lie coalgebra by the new
multiplication and comultiplication above. In order to proof $A$ is a braided Lie bialgebra, we need to define
the left $H$-module and left $H$-comodule over $A$ by:
\begin{align*}
&\trr_{L}=\trr-\trl: H\ot A \to A  , ~~~\phi_{L}=\phi-\tau\psi: A\to H\ot A; ~\phi_{L}(a)=a_{[-1]}\ot a_{[0]},
\end{align*}
that is
\begin{align*}
&x\trr_{L} a=x\trr a- a\trl x, ~~~~\phi_{L}(a)=a_{[-1]}\ot a_{[0]}=a_{(-1)}\ot a_{(0)}-a_{(1)}\ot a_{(0)},
\end{align*}
 for any $x, y$ in $A$. All we need to do is to prove that
\begin{eqnarray}\label{bialg11}
\delta ( [a, b ] )&=& [ a, b_{[1]} ] \otimes  b_{[2]}+b_{[1]}\otimes [ a, b_{[2]}]+ [a_{[1]},  b] \otimes  a_{[2]}+ a_{[1]} \otimes [a_{[2]},  b]\\
 \notag&&+(a_{[-1]} \trr_{L}  b) \otimes  a_{[0]}+ b_{[0]}\otimes  (b_{[-1]} \trr_{L}  a)-  (b_{[-1]} \trr_{L}   a) \otimes  b_{[0]} - a_{[0]}\otimes  (a_{[-1]}\trr_{L}  b).
\end{eqnarray}
For direct calculations, we have  the left hand side equal to
\begin{eqnarray*}
\delta ( [a, b ] )&=&\delta(ab-ba)\\
&=&\Delta(ab)-\Delta(ba)-\tau\Delta(ab)+\tau\Delta(ba),
\end{eqnarray*}
and the right hand side equal to
\begin{eqnarray*}
&&[ a, b\bi ] \otimes  b\bii+b\bi\otimes [ a, b\bii]+ [a\bi,  b] \otimes  a\bii+ a\bi \otimes [a\bii,  b]\\
&&+(a_{[-1]} \trr_{L}  b) \otimes  a_{[0]}+ b_{[0]}\otimes  (b_{[-1]} \trr_{L}  a)-  (b_{[-1]} \trr_{L}   a) \otimes  b_{[0]} - a_{[0]}\otimes  (a_{[-1]}\trr_{L}  b)\\
&=&ab\bi\ot b\bii-b\bi a\ot b\bii+b\bi\ot ab\bii-b\bi\ot b\bii a\\
&&+a\bi b\ot a\bii- b a\bi\ot a\bii+a\bi \otimes a\bii b-a\bi \otimes b a\bii\\
&&+(a_{(-1)} \trr_{L}  b) \otimes  a_{(0)}-(a_{(1)} \trr_{L}  b) \otimes  a_{(0)}
+ b_{(0)}\otimes  (b_{(-1)} \trr_{L}  a)-b_{(0)}\otimes  (b_{(1)} \trr_{L}  a)\\
&&-  (b_{(-1)} \trr_{L}   a) \otimes  b_{(0)}+ (b_{(1)} \trr_{L}   a) \otimes  b_{(0)} - a_{(0)}\otimes  (a_{(-1)}\trr_{L}  b)+a_{(0)}\otimes  (a_{(1)}\trr_{L}  b)\\
&=&ab\li\ot b\lii-ab\lii\ot b\li-b\li a\ot b\lii+b\lii a\ot b\li+b_{1}\ot ab_{2}-b_{2}\ot ab_{1}\\
&&-b_{1}\ot b_{2}a+b_{2}\ot b_{1}a+a_{1}b\ot a_{2}-a_{2}b\ot a_{1}- b a_{1}\ot a_{2}+b a_{2}\ot a_{1}\\
&&+a_{1} \otimes a_{2}b-a_{2} \otimes a_{1}b-a_{1} \otimes ba_{2}+a_{2} \otimes ba_{1}+(a_{(-1)} \trr  b) \otimes  a_{(0)}-(b\trl a_{(-1)}) \otimes  a_{(0)}\\
&&-(a_{(1)}\trr b) \otimes  a_{(0)}+(b\trl a_{(1)}) \otimes  a_{(0)}
+ b_{(0)}\otimes  (b_{(-1)} \trr a)-b_{(0)}\otimes (a\trl b_{(-1)}) \\
&&-b_{(0)}\otimes  (b_{(1)} \trr  a)+b_{(0)}\otimes (a\trl b_{(1)})
-  (b_{(-1)} \trr  a) \otimes  b_{(0)}+(a\trl b_{(-1)}) \otimes  b_{(0)}\\
&&+ (b_{(1)} \trr a) \otimes  b_{(0)}-(a\trl b_{(1)})\otimes  b_{(0)}
 - a_{(0)}\otimes  (a_{(-1)}\trr b)+a_{(0)}\otimes  (b\trl a_{(-1)})\\
&& +a_{(0)}\otimes  (a_{(1)}\trr   b)-a_{(0)}\otimes (b\trl a_{(1)})\\
&=&\Delta(ab)-\Delta(ba)-\tau\Delta(ab)+\tau\Delta(ba)\\
&&-2a_{1} b \otimes a_{2} -2a\lii b\ot a\li+2a\lii\ot b a\li-2b\li\ot a b\lii -2b\li\ot b\lii a+2a b\li\ot b\lii\\
&&-2(a\loi\trr b)\ot a\loo-2(a\lmi\trr b)\ot a\loo+2a\loo\ot (b\trl a\loi)\\
&&-2b\loo\ot (a\trl b\lmi)-2b\loo\ot(b\lmi\trr a)+2(a\trl b\loi)\ot b\loo\\
&&+b\li\ot a b\lii+a b\lii\ot b\li+a\li b\ot a\lii+a\lii\ot a\li b\\
&&+b\loo\ot(a\trl b\lmi)+(a\trl b\lmi)\ot b\loo+a\loo\ot(a\loi\trr b)+(a\loi\trr b)\ot a\loo\\
&&-a\li\ot b a\lii-b a\lii\ot a\li-b\li a\ot b\lii-b\lii\ot b\li a\\
&&-a\loo\ot(b\trl a\lmi)-(b\trl a\lmi)\ot a\loo-b\loo\ot(b\loi\trr a)-(b\loi\trr a)\ot b\loo\\
&&+2b_{1} a \otimes b_{2} +2b\lii a\ot b\li-2b\lii\ot a b\li+2a\li\ot b a\lii +2a\li\ot a\lii b-2b a\li\ot a\lii\\
&&+2(b\loi\trr a)\ot b\loo+2(b\lmi\trr a)\ot b\loo-2b\loo\ot (a\trl b\loi)\\
&&+2a\loo\ot (b\trl a\lmi)+2a\loo\ot(a\lmi\trr b)-2(b\trl a\loi)\ot a\loo\\
&&+ab\li\ot b\lii-ab\lii\ot b\li-b\li a\ot b\lii+b\lii a\ot b\li+b_{1}\ot ab_{2}-b_{2}\ot ab_{1}\\
&&-b_{1}\ot b_{2}a+b_{2}\ot b_{1}a+a_{1}b\ot a_{2}-a_{2}b\ot a_{1}- b a_{1}\ot a_{2}+b a_{2}\ot a_{1}\\
&&+a_{1} \otimes a_{2}b-a_{2} \otimes a_{1}b-a_{1} \otimes ba_{2}+a_{2} \otimes ba_{1}+(a_{(-1)} \trr  b) \otimes  a_{(0)}-(b\trl a_{(-1)}) \otimes  a_{(0)}\\
&&-(a_{(1)}\trr b) \otimes  a_{(0)}+(b\trl a_{(1)}) \otimes  a_{(0)}
+ b_{(0)}\otimes  (b_{(-1)} \trr a)-b_{(0)}\otimes (a\trl b_{(-1)}) \\
&&-b_{(0)}\otimes  (b_{(1)} \trr  a)+b_{(0)}\otimes (a\trl b_{(1)})
-  (b_{(-1)} \trr  a) \otimes  b_{(0)}+(a\trl b_{(-1)}) \otimes  b_{(0)}\\
&&+ (b_{(1)} \trr a) \otimes  b_{(0)}-(a\trl b_{(1)})\otimes  b_{(0)}
 - a_{(0)}\otimes  (a_{(-1)}\trr b)+a_{(0)}\otimes  (b\trl a_{(-1)})\\
&& +a_{(0)}\otimes  (a_{(1)}\trr   b)-a_{(0)}\otimes (b\trl a_{(1)})\\
&=&\Delta(ab)-\Delta(ba)-\tau\Delta(ab)+\tau\Delta(ba)\\
&&+3(ab\li\ot b\lii +b\lii a\ot b\li-b\lii\ot ab\li-b\li\ot b\lii a-a\lii b\ot a\li-b a\li\ot a\lii+a\li\ot a\lii b\\
&&+a\lii\ot ba\li-(a\lmi\trr b)\ot a\loo+a\loo\ot(b\trl a\loi)-b\loo\ot(b\lmi\trr a)+(a\trl b\loi)\ot b\loo\\
&&+(b\lmi\trr a)\ot b\loo-b\loo\ot (a\trl b\loi)+a\loo\ot(a\lmi\trr b)-(b\trl a\loi)\ot a\loo)\\
&=&\Delta(ab)-\Delta(ba)-\tau\Delta(ab)+\tau\Delta(ba)\\
&&+6(-a\lii b\ot a\li+a\lii\ot ba\li-b\li\ot b\lii a+ab\li\ot b\lii-(a\lmi\trr b)\ot a\loo+a\loo\ot(b\trl a\loi)\\
&&-b\loo\ot(b\lmi\trr a)+(a\trl b\loi)\ot b\loo)\\
&=&\Delta(ab)-\Delta(ba)-\tau\Delta(ab)+\tau\Delta(ba).
\end{eqnarray*}
Thus \eqref{bialg11} holds and the proof is completed.
\end{proof}

\section{Cocycle bicrossproduct of  braided alternative bialgebras}
\subsection{Matched pair of braided alternative bialgebras}

In this section, we construct alternative bialgebra from the double cross biproduct of a matched pair of braided alternative bialgebras.

Let $A, H$ be both alternative  algebras and alternative  coalgebras.   For any $a, b\in A$, $x, y\in H$,  we denote maps
\begin{align*}
&\ppr: H \otimes A \to A,\quad \ppl: A\otimes H\to A,\\
&\trr: A\otimes H \to H,\quad \trl: H\otimes A \to H,\\
&\phi: A \to H \otimes A,\quad  \psi: A \to A\otimes H,\\
&\rho: H  \to A\otimes H,\quad  \gamma: H \to H \otimes A,
\end{align*}
by
\begin{eqnarray*}
&& \ppr (x \otimes a) = x \ppr a, \quad \ppl(a\otimes x) = a \ppl x, \\
&& \trr (a \otimes x) = a \trr x, \quad \trl(x \otimes a) = x \triangleleft a, \\
&& \phi (a)=\sum a\lmoi\ot a\loo, \quad \psi (a) = \sum a\loo \ot a\lmi,\\
&& \rho (x)=\sum x\boi\ot x\boo, \quad \gamma (x) = \sum x\boo \ot x\bi.
\end{eqnarray*}

\begin{definition}(\cite{NB})
A \emph{matched pair} of alternative  algebras is a system $(A, \, {H},\, \trl, \, \trr, \, \ppl, \, \ppr)$ consisting
of two alternative  algebras $A$ and ${H}$ and four bilinear maps $\triangleleft : {H}\otimes A\to {H}$, $\trr : {A} \otimes H
\to H$, $\ppl: A \otimes {H} \to A$, $\ppr: H\otimes {A} \to {A}$ such that $({H}, \trr, \trl)$ is an $A$-bimodule,  $(A, \ppr, \ppl)$ is an ${H}$-bimodule and satisfying the following compatibilities for all $a, b\in A$, $x, y \in {H}$:
\begin{enumerate}
\item[(AM1)] $x\ppr  (ab)+a(x\ppr b)+a\ppl(x\trl b)=(x\ppr a+a\ppl x) b+ (x\trl a+a\trr x)\ppr b$,

\item[(AM2)]$x\ppr  (ab+ b a)=(x\ppr a)b+(x\trl a)\ppr b+(x\ppr b)a+(x\trl b)\ppr a$,

\item[(AM3)] $  (ab) \ppl x+(a\ppl x)b+(a\trr x)\ppr b= a(b\ppl x+x\ppr b)+a\ppl(b\trr x+x\trl b)$,

\item[(AM4)] $ (ab+b a )\ppl x=a(b \ppl x) + a \ppl ( b \trr x)+b(a\ppl x)+b\ppl(a\trr x)$,

\item[(AM5)]$a\trr (x y)+x(a\trr y)+x\trl(a\ppl y)=(x\trl a+a\trr x)y+(x\ppr a+a\ppl x)\trr y$,

\item[(AM6)] $ a\trr (x y+ y x)= (a\trr x)y+(a\ppl x)\trr y+(a\trr y)x+(a\ppl y)\trr x$,

\item[(AM7)] $  (x y)\trl a+(x\trl a)y+(x\ppr a)\trr y=x(y\trl a+a\trr y)+x\trl(y\ppr a+a\ppl y) $,

\item[(AM8)] $(x y+y x)\trl a=x(y\trl a)+x\trl(y\ppr a)+y(x\trl a)+y\trl(x\ppr a)$.
\end{enumerate}
\end{definition}

\begin{lemma}(\cite{NB})
Let  $(A, \, {H},\, \trl, \, \trr, \, \ppl, \, \ppr)$be a matched pair of alternative  algebras.
Then $A \, \bowtie {H}: = A \oplus  {H}$, as a vector space, with the multiplication defined for any $a, b\in A$ and $x, y\in {H}$ by
\begin{equation}
(a +x) (b+ y) : = (ab+ a \ppl y + x\ppr b)+( a\trr y + x\trl b + xy )
\end{equation}
is an alternative algebra called the \emph{bicrossed product} associated to the matched pair of alternative  algebras $A$ and ${H}$.
\end{lemma}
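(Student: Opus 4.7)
The plan is to verify the two alternator identities \eqref{eq:LB2} and \eqref{eq:LB3} directly on the direct sum $A\oplus H$, using the prescribed multiplication. Write a generic element of $A\bowtie H$ as $a+x$ with $a\in A$, $x\in H$, and set
\[
(a+x)(b+y)=(ab+a\ppl y+x\ppr b)+(a\trr y+x\trl b+xy).
\]
Thus every triple product $((a+x)(b+y))(c+z)$ and $(a+x)((b+y)(c+z))$ expands into a sum of terms that each lie entirely in $A$ or entirely in $H$. The proof amounts to collecting, for a fixed triple, the $A$-part and the $H$-part of each alternator and showing they vanish.

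First I would handle the left alternator identity
\[
\bigl((a+x),(b+y),(c+z)\bigr)+\bigl((b+y),(a+x),(c+z)\bigr)=0.
\]
After expansion, the pure $A$-part combines: the terms $(ab)c-a(bc)+(ba)c-b(ac)$ vanish because $A$ is alternative (identity \eqref{eq:LB2}); the terms involving only one of $y,z$ or one of $x,z$ acting on $A$-elements vanish by the bimodule axioms (\ref{(10)})--(\ref{(13)}) applied to the $H$-bimodule $A$; and the genuinely mixed terms, in which both $H$-actions and $A$-multiplication are intertwined, rearrange precisely into the left-hand side of (AM1) minus its right-hand side, or into (AM2), depending on how the $H$-element is positioned. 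Symmetrically, the pure $H$-part vanishes by the alternativity of $H$, the bimodule axioms for the $A$-bimodule $H$, and the matched-pair axioms (AM5) and (AM6). Thus the first alternator identity holds iff (AM1), (AM2), (AM5), (AM6) and the bimodule axioms on both sides are satisfied, which is exactly the hypothesis.

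Next, I would do the same for the right alternator
\[
\bigl((a+x),(b+y),(c+z)\bigr)+\bigl((a+x),(c+z),(b+y)\bigr)=0.
\]
Here the $A$-part reduces, by the same bookkeeping, to the alternativity of $A$ plus the remaining bimodule conditions plus (AM3) and (AM4); the $H$-part reduces analogously to the alternativity of $H$ plus (AM7) and (AM8) and the remaining bimodule conditions. Since all eight matched-pair axioms and both bimodule structures are assumed, both identities are verified.

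The main obstacle is purely bookkeeping: each alternator produces on the order of a few dozen terms once every $(a+x)(b+y)$ is fully expanded, and one must carefully separate pieces that land in $A$ from those that land in $H$. The conceptual content, however, is tautological: the axioms (AM1)--(AM8) were formulated by collecting exactly those mixed terms of the two alternator identities that neither the algebra axioms of $A$ and $H$ alone nor the bimodule axioms can absorb. Hence each axiom corresponds to one specific homogeneity class of terms (fixed numbers of $A$-factors vs.\ $H$-factors, and fixed placement of the lone $H$- or $A$-element), and the matching is one-to-one. Once the clerical expansion is carried out and sorted by type, the vanishing is automatic from the stated hypotheses, completing the proof.
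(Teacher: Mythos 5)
Your proposal is correct and is essentially the argument the paper itself relies on: although the paper states this lemma without proof (deferring to \cite{NB}), its proof of the more general cocycle cross product lemma in Section \ref{subsetion-4.2} is exactly this direct expansion of the two alternator identities followed by sorting the resulting terms into $A$- and $H$-components by homogeneity type, and the present lemma is the special case $\sigma=\theta=0$ of that computation. One small bookkeeping slip: the conditions symmetric in the two $A$-arguments or the two $H$-arguments in the \emph{last} two slots, namely (AM2) and (AM6), actually arise from the second alternator identity (which symmetrizes arguments two and three), while (AM4) and (AM8) arise from the first, rather than the assignment you state; since all eight axioms and both bimodule structures are assumed, this does not affect the validity of the conclusion.
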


Now we introduce the notion of matched pairs of alternative coalgebras, which is the dual version of  matched pairs of alternative algebras.

\begin{definition} A \emph{matched pair} of alternative  coalgebras is a system $(A, \, {H}, \, \phi, \, \psi, \, \rho, \, \gamma)$ consisting
of two alternative  coalgebras $A$ and ${H}$ and four bilinear maps
$\phi: {A}\to H\otimes A$, $\psi: {A}\to A \otimes H$, $\rho: H\to A\otimes {H}$, $\gamma: H \to {H} \ot {A}$
such that $({H}, \rho, \gamma)$ is an $A$-bicomodule,  $(A, \phi, \psi)$ is an ${H}$-bicomodule and satisfying the following compatibility conditions for any $a\in A$, $x\in {H}$:
\begin{enumerate}
\item[(CM1)]
$\phi(a\li)\ot a\lii+\gamma(a\loi)\ot a\loo-a_{(-1)} \otimes \Delta_{A}\left(a_{(0)}\right)\\
=-\tau_{12}\big(\psi(a\li)\ot a\lii+\rho(a\loi)\ot a\loo-a\li\ot\phi(a\lii)-a\loo\ot\gamma(a\lmi)\big)$,

\item[(CM2)]
$\Delta_{A}(a\loo)\ot a\lmi-a\li\ot\psi(a\lii)-a\loo\ot\rho(a\lmi)\\
=-\tau_{12}\big(\Delta_{A}(a\loo)\ot a\lmi-a\li\ot\psi(a\lii)-a\loo\ot\rho(a\lmi)\big)$,

\item[(CM3)]
$\Delta_{H}(x\boo)\ot x\bi-x\li\ot\gamma(x\lii)-x\boo\ot\phi(x\bi)\\
=-\tau_{12}\big(\Delta_{H}(x\boo)\ot x\bi-x\li\ot\gamma(x\lii)-x\boo\ot\phi(x\bi)\big)$,

\item[(CM4)]
$ \rho(x\li)\ot x\lii+\psi(x\boi)\ot x\boo-x\boi\ot\Delta_{H}(x\boo)\\
=-\tau_{12}\big(\gamma(x\li)\ot x\lii+\phi(x\boi)\ot x\boo-x\li\ot\rho(x\lii)-x\boo\ot\psi(x\bi)\big)$,

\item[(CM5)]
$\phi(a\li)\ot a\lii+\gamma(a\loi)\ot a\loo-a_{(-1)} \otimes \Delta_{A}(a_{(0)})\\
=-\tau_{23}\big(\phi(a\li)\ot a\lii+\gamma(a\loi)\ot a\loo-a_{(-1)} \otimes \Delta_{A}(a_{(0)})\big)$,

\item[(CM6)]
$\Delta_{A}(a\loo)\ot a\lmi-a\li\ot\psi(a\lii)-a\loo\ot\rho(a\lmi)\\
=-\tau_{23}\big(\psi(a\li)\ot a\lii+\rho(a\loi)\ot a\loo-a\li\ot\phi(a\lii)-a\loo\ot\gamma(a\lmi)\big)$,

\item[(CM7)]
$\Delta_{H}(x\boo)\ot x\bi-x\li\ot\gamma(x\lii)-x\boo\ot\phi(x\bi)\\
=-\tau_{23}\big(\gamma(x\li)\ot x\lii+\phi(x\boi)\ot x\boo-x\li\ot\rho(x\lii)-x\boo\ot\psi(x\bi)\big)$,

\item[(CM8)]
$\rho(x\li)\ot x\lii+\psi(x\boi)\ot x\boo-x\boi\ot\Delta_{H}(x\boo)\\
=-\tau_{23}\big(\rho(x\li)\ot x\lii+\psi(x\boi)\ot x\boo-x\boi\ot\Delta_{H}(x\boo)\big)$.
\end{enumerate}
\end{definition}

\begin{lemma}\label{lem1} Let $(A, \, {H}, \, \phi, \, \psi, \, \rho, \, \gamma)$ be a matched pair of alternative  coalgebras. We define $E=A\lrcoprod H$ as the vector space $A\oplus H$ with   comultiplication
$$\Delta_{E}(a)=(\Delta_{A}+\phi+\psi)(a), \quad\Delta_{E}(x)=(\Delta_{H}+\rho+\gamma)(x),$$
that is
$$\Delta_{E}(a)=\sum a\li \ot a\lii+\sum a\loi \ot a\loo+\sum a\mo\ot a\mi, $$
$$\Delta_{E}(x)=\sum x\li \ot x\lii+\sum  x\boi \ot x\boo+\sum x\boo \ot x\bi.$$
Then  $A\lrcoprod H$ is an alternative  coalgebra which is called the \emph{bicrossed coproduct} associated to the matched pair of alternative  coalgebras $A$ and $H$.
\end{lemma}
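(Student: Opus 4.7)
The plan is to verify the two defining identities of an alternative coalgebra for $E = A \oplus H$ equipped with $\Delta_E$ as above. Setting
\[
\Omega(e) := (\Delta_E\otimes\id)\Delta_E(e) - (\id\otimes\Delta_E)\Delta_E(e),
\]
these identities read $\Omega(e) + \tau_{12}\Omega(e) = 0$ and $\Omega(e) + \tau_{23}\Omega(e) = 0$, and by linearity it suffices to check them separately for $e = a \in A$ and $e = x \in H$.

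The key organizational observation is that $E^{\otimes 3}$ decomposes into eight tri-graded slots indexed by $(\varepsilon_1,\varepsilon_2,\varepsilon_3) \in \{A,H\}^3$, and each of the six structure maps $\Delta_A,\phi,\psi,\Delta_H,\rho,\gamma$ has a fixed codomain with respect to this grading. Hence every summand appearing in $\Omega(e)$ lives in a definite tri-graded slot, and since $\tau_{12}$ and $\tau_{23}$ permute the slots in the obvious fashion, the identity $\Omega(e) + \tau_{ij}\Omega(e) = 0$ can be verified independently on each $\tau_{ij}$-orbit---either a single fixed slot or a pair of slots.

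Within the pure slots $A^{\otimes 3}$ (for $e \in A$) and $H^{\otimes 3}$ (for $e \in H$) the identity reduces to the alternative-coalgebra axioms \eqref{eq:LB4}, \eqref{eq:LB5} for $(A,\Delta_A)$ and $(H,\Delta_H)$ respectively. In the slots that involve only the $H$-bicomodule structure of $A$---namely $H\otimes H\otimes A$, $H\otimes A\otimes H$ and $A\otimes H\otimes H$ for $e \in A$---the identity reduces exactly to the bicomodule axioms \eqref{(14)}--\eqref{(17)} for $(A,\phi,\psi)$; the symmetric three slots for $e \in H$ invoke the analogous axioms for $(H,\rho,\gamma)$. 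The remaining mixed slots, where both an $A$-valued and an $H$-valued structure map contribute simultaneously, yield precisely the axioms (CM1)--(CM8). Concretely, for $e = a$ the $\tau_{12}$-orbit $\{H\otimes A\otimes A,\ A\otimes H\otimes A\}$ gives (CM1), the $\tau_{12}$-fixed slot $A\otimes A\otimes H$ gives (CM2), the $\tau_{23}$-fixed slot $H\otimes A\otimes A$ gives (CM5), and the $\tau_{23}$-orbit $\{A\otimes H\otimes A,\ A\otimes A\otimes H\}$ gives (CM6); for $e = x$, the analogous analysis yields (CM3), (CM4), (CM7), (CM8).

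The principal difficulty is bookkeeping rather than mathematical content: each of $(\Delta_E\otimes\id)\Delta_E(e)$ and $(\id\otimes\Delta_E)\Delta_E(e)$ expands into many summands which must be sorted into the tri-graded slots, but once this sorting is done the matching of each $\tau_{ij}$-orbit to the corresponding axiom is immediate. Grouping the computation by tri-grade, as above, is what makes an otherwise unwieldy verification tractable.
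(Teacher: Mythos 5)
Your proposal is correct, and it is essentially the paper's argument: the paper omits the proof of this lemma as a ``direct computation,'' and its written-out proof of the analogous Lemma \ref{lem2} proceeds by exactly the expansion of $(\Delta_E\ot\id)\Delta_E$ and $(\id\ot\Delta_E)\Delta_E$ followed by term-by-term matching against the coalgebra, bicomodule, and (CM1)--(CM8) axioms. Your tri-graded bookkeeping is just a cleaner way of organizing that same matching, and your assignment of orbits to axioms checks out.
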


The proof of the above Lemma \ref{lem1} is omitted since it is by direct computations.
In the following of this section, we construct alternative bialgebra from the double cross biproduct of a pair of braided alternative bialgebras.
First we generalize the concept of Hopf module to the case of $A$ is not necessarily an alternative bialgebra.
But by abuse of notation, we also call it alternative Hopf module.

\begin{definition}
Let $A$ be simultaneously an alternative algebra and an alternative  coalgebra.
 If $H$ is  an   $A$-bimodule,  $A$-bicomodule and satisfying
\begin{enumerate}
\item[(HM1')]  $\rho(a \trr x)=-a\lii\ot(x\trl a\li)+x\boi\ot(a\trr x\boo)+x\boi\ot(x\boo\trl a)-a x_{[-1]} \otimes x_{[0]}$,
\end{enumerate}
\begin{enumerate}
\item[(HM2')]  $\gamma(x\trl a)=(x\boo\trl a)\ot x\bi+(x\boo\trl a)\ot x\boi-x\boo\ot a x\boi-\left(x\trl a_{1}\right) \otimes a_{2}$,
\end{enumerate}
\begin{enumerate}
\item[(HM3')]  $\rho(x\trl a)=x\boi a\ot x\boo+x\bi a\ot x\boo-x\bi\ot(a\trr x\boo)+a\li\ot(x\trl a\lii)+a\li\ot(a\lii\trr x)$,
\item[(HM4')] $\gamma(a \trr x)=(a\li\trr x)\ot a\lii+(a\lii\trr x)\ot a\li+x\boo\ot a x\bi+x\boo\ot x\bi a-\left(a\trr x_{[0]}\right)\otimes x_{[1]}$,
\end{enumerate}
\begin{enumerate}
\item[(HM5')] $\rho(a\trr x)+\tau\gamma(a\trr x)=x\boi\ot(a\trr x\boo)+a x\bi\ot x\boo+a\lii\ot (a\li\trr x)$,

\item[(HM6')] $\rho(x\trl a)+\tau\gamma(x\trl a)=a\li\ot(x\trl a\lii)+x\boi a\ot x\boo+x\bi\ot(x\boo\trl a)$,
\end{enumerate}
\noindent then $H$
is called an alternative Hopf bimodule over $A$.
\end{definition}




We denote  the  category of  alternative Hopf bimodules over $A$ by ${}^{A}_{A}\mathcal{M}{}^{A}_{A}$.

\begin{definition}
Let $A$ be an alternative algebra and  alternative coalgebra and  $H$ is an alternative Hopf bimodule over $A$. If $H$ is an alternative algebra and an alternative coalgebra in ${}^{A}_{A}\mathcal{M}^{A}_{A}$, then we call $H$ be a \emph{braided alternative  bialgebra} over $A$, if the following conditions are satisfied:
\begin{enumerate}
\item[(BB3)]
$\Delta_{H}(x y)=x\li y\ot x\lii+x\lii y\ot x\li-x\lii\ot y x\li+y\li\ot x y\lii+y\li\ot y\lii x-x y\li\ot y\lii\\
+(x\boi\trr y)\ot x\boo+(x\bi\trr y)\ot x\boo-x\boo\ot(y\trl x\boi)+y\boo\ot(x\trl y\bi)\\
+y\boo\ot(y\bi\trr x)-(x\trl y\boi)\ot y\boo,$
\item[(BB4)]
$\Delta_{H}(y x)+\tau\Delta_{H}(y x)=x\li\ot y x\lii+y x\lii\ot x\li+y\li x\ot y\lii+y\lii\ot y\li x\\
+x\boo\ot(y\trl x\bi)+(y\trl x\bi)\ot x\boo+y\boo\ot(y\boi\trr x)+(y\boi\trr x)\ot y\boo.$
\end{enumerate}
\end{definition}

\begin{definition}\label{def:dmp}
Let $A, H$ be both alternative  algebras and alternative  coalgebras. If  the following conditions hold:
\begin{enumerate}
\item[(DM1)]  $\phi(a b)=-a\lmi\ot b a\loo+b\loi\ot a b\loo+b\loi\ot b\loo a\\
+(a\loi\trl b)\ot a\loo+(a\lmi\trl b)\ot a\loo-(a\trr b\loi)\ot b\loo$,
\item[(DM2)] $\psi(a b)=a\loo b\ot a\lmi+a\loo b\ot a\loi-a b\loo\ot b\lmi\\
-a\loo\ot (b\trr a\loi)+b\loo\ot(a\trr b\lmi)+b\loo\ot(b\lmi\trl a)$,
\item[(DM3)] $\rho(x y)=(x\bi\ppl y)\ot x\boo+(x\boi\ppl y)\ot x\boo-x_{[1]} \otimes y x_{[0]}  \\
-\left(x \ppr y_{[-1]}\right) \otimes y_{[0]}+y\boi\ot x y\boo+y\boi\ot y\boo x$,
\item[(DM4)] $\gamma(x y)=x\boo y\ot x\bi+x\boo y\ot x\boi-x_{[0]}\otimes (y\ppr x\boi)\\
-xy_{[0]}\otimes y_{[1]}+y\boo\ot (x\ppr y\bi)+y\boo\ot(y\bi\ppl x)$,
\item[(DM5)] $\Delta_{A}(x \ppr b)=(x\boo\ppr b)\ot x\bi+(x\boo\ppr b)\ot x\boi-x\bi\ot(b\ppl x\boo)\\
-\left(x \ppr b_{1}\right) \otimes b_{2}+b\li\ot(x\ppr b\lii)+b\li\ot(b\lii\ppl x)$,
\item[(DM6)] $\Delta_{A}(a\ppl y)=(a\li\ppl y)\ot a\lii+\left(a_{2} \ppl y\right)\ot a\li-a\lii\ot(y\ppr a\li)\\
-\left(a\ppl y_{[0]}\right) \otimes y_{[1]}+y\boi\ot(a\ppl y\boo)+y\boi\ot(y\boo\ppr a)$,
\item[(DM7)] $\Delta_{H}(a \trr y)=(a\loo\trr y)\ot a\lmi+(a\loo\trr y)\ot a\loi-a\lmi\ot(y\trl a\loo)\\
+y\li\ot(a\trr y\lii)+y\li\ot(y\lii\trl a)-\left(a \trr y_{1}\right) \otimes y_{2}$,
\item[(DM8)] $\Delta_{H}(x \trl b)=(x\li\trl b)\ot x\lii+(x\lii\trl b)\ot x\li-x\lii\ot(b\trr x\li)\\
+b\loi\ot(x\trl b\loo)+b\loi\ot(b\loo\trr x)-\left(x\trl b_{(0)}\right) \otimes b_{(1)}$,
\item[(DM9)]
$\phi(x \ppr b)+\gamma(x\trl b)=(x\boo\trl b)\ot x\bi+(x\boo\trl b)\ot x\boi$\\
$-x\lii\ot(b\ppl x\li)-x\boo\ot b x\boi+b\loi\ot(x\ppr b\loo)$\\
$-\left(x\trl b_{1}\right) \otimes b_{2}+b\loi\ot(b\loo\ppl x)-x b_{(-1)} \otimes b_{(0)}$,
\item[(DM10)]
$\psi(a\ppl y)+\rho(a \trr y)=(a\loo\ppl y)\ot a\lmi+(a\loo\ppl y)\ot a\loi$\\
$-a\lii\ot(y\trl a\li)-a\loo\ot y a\loi+y\boi\ot(a\trr y\boo)$\\
$-\left(a\ppl y_{1}\right) \otimes y_{2}+y\boi\ot(y\boo\trl a)-a y_{[-1]} \otimes y_{[0]}$,
\item[(DM11)]
$\psi(x \ppr b)+\rho(x\trl b)=(x\li\ppr b)\ot x\lii+x\boi b\ot x\boo+(x\lii\ppr b)\ot x\li\\
+x\bi b\ot x\boo-x\bi\ot(b\trr x\boo)+b\li\ot(x\trl b\lii)+b\loo\ot x b\lmi\\
+b\li\ot(b\lii\trr x)+b\loo\ot b\lmi x-(x \ppr b_{(0)}) \otimes b_{(1)}$,
\item[(DM12)]
$\phi(a\ppl y)+\gamma(a \trr y)=(a\li\trr y)\ot a\lii+a\loi y\ot a\loo+(a\lii\trr y)\ot a\li\\
+a\lmi y\ot a\loo-a\lmi\ot (y\ppr a\loo)+y\li\ot(a\ppl y\lii)+y\boo\ot a y\bi\\
+y\li\ot(y\lii\ppr a)+y\boo\ot y\bi a-\left(a\trr y_{[0]}\right)\otimes y_{[1]}$,
\item[(DM13)]  $\phi(b a)+\tau\psi(b a)=a\loi\ot b a\loo+(b\trr a\lmi)\ot a\loo\\
+(b\loi\trl a)\ot b\loo+b\lmi\ot b\loo a$,
\item[(DM14)] $\psi(b a)+\tau\phi(b a)=a\loo\ot(b\trr a\lmi)+b a\loo \ot a\loi\\
+ b\loo a\ot b\lmi+b\loo\ot(b\loi\trl a)$,
\item[(DM15)] $\rho(y x)+\tau\gamma(y x)=x\boi\ot y x\boo+(y\ppr x\bi)\ot x\boo\\
+(y\boi\ppl x)\ot y\boo+y\bi\ot y\boo x$,
\item[(DM16)] $\gamma(y x)+\tau\rho(y x)=y x\boo\ot x\boi+y\boo x\ot y\bi\\
+y\boo\ot(y\boi\ppl x)+x\boo\ot(y\ppr x\bi)$,
\item[(DM17)] $\Delta_{A}(b \ppl x)+\tau\Delta_{A}(b \ppl x)=x\boi\ot(b\ppl x\boo)+(b\ppl x\boo)\ot x\boi\\
+(b\li\ppl x)\ot b\lii+b\lii\ot(b\li\ppl x)$,
\item[(DM18)] $\Delta_{A}(y\ppr a)+\tau\Delta_{A}(y\ppr a)=a\li\ot(y\ppr a\lii)+(y\ppr a\lii)\ot a\li\\
+(y\boo\ppr a)\ot y\bi+y\bi\ot(y\boo\ppr a)$,
\item[(DM19)] $\Delta_{H}(y\trl a)+\tau\Delta_{H}(y\trl a)=a\loi\ot(y\trl a\loo)+(y\trl a\loo)\ot a\loi\\
+(y\li\trl a)\ot y\lii+y\lii\ot(y\li\trl a)$,
\item[(DM20)] $\Delta_{H}(b \trr x)+\tau\Delta_{H}(b \trr x)=x\li\ot(b\trr x\lii)+(b\trr x\lii)\ot x\li\\
+(b\loo\trr x)\ot b\lmi+b\lmi\ot (b\loo\trr x)$,
\item[(DM21)]
$\phi(y \ppr a)+\tau\psi(y \ppr a)+\gamma(y\trl a)+\tau\rho(y\trl a)\\
=a\loi\ot(y\ppr a\loo)+(y\trl a\lii)\ot a\li+y a\lmi\ot a\loo\\
+(y\boo\trl a)\ot y\bi+y\lii\ot(y\li\ppr a)+y\boo\ot y\boi a$,
\item[(DM22)]
$\phi(b \ppl x)+\tau\psi(b \ppl x)+\gamma(b\trr x)+\tau\rho(b\trr x)\\
=x\li\ot(b\ppl x\lii)+x\boo\ot b x\bi+(b\trr x\boo)\ot x\boi\\
+(b\li\trr x)\ot b\lii+b\loi x\ot b\loo+b\lmi\ot(b\loo\ppl x)$,
\item[(DM23)]
$\psi(y \ppr a)+\tau\phi(y \ppr a)+\rho(y\trl a)+\tau\gamma(y\trl a)\\
=a\li\ot(y\trl a\lii)+a\loo\ot y a\lmi+(y\ppr a\loo)\ot a\loi\\
+(y\li\ppr a)\ot y\lii+y\boi a\ot y\boo+y\bi\ot(y\boo\trl a)$,
\item[(DM24)]
$\psi(b \ppl x)+\tau\phi(b \ppl x)+\rho(b\trr x)+\tau\gamma(b\trr x)\\
=x\boi\ot(b\trr x\boo)+(b\ppl x\lii)\ot x\li+b x\bi\ot x\boo\\
+(b\loo\ppl x)\ot b\lmi+b\lii\ot(b\li\trr x)+b\loo\ot b\loi x$,
\end{enumerate}
\noindent then $(A, H)$ is called a \emph{double matched pair}.
\end{definition}

\begin{theorem}\label{main1}
Let $(A, H)$ be a double matched pair of alternative  algebras and alternative  coalgebras,
$A$ is  a  braided alternative bialgebra in
${}^{H}_{H}\mathcal{M}^{H}_{H}$, $H$ is  a braided alternative bialgebra in
${}^{A}_{A}\mathcal{M}^{A}_{A}$. If we define the double cross biproduct of $A$ and
$H$, denoted by $A\lrbiprod H$, $A\lrbiprod H=A\bowtie H$ as an alternative
algebra, $A\lrbiprod H=A\lrcoprod H$ as an alternative  coalgebra, then
$A\lrbiprod H$ become an alternative bialgebra if and only if  $(A, H)$ forms a double matched pair.
\end{theorem}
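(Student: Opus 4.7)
The strategy is to mimic the proof of Theorem~\ref{thm-main0} (the bosonisation theorem), but now in the more symmetric setting where both factors $A$ and $H$ carry a braided alternative bialgebra structure over each other. Since $A\bowtie H$ is already known to be an alternative algebra and $A\lrcoprod H$ is already known to be an alternative coalgebra (by the cited lemmas on bicrossed products and bicrossed coproducts of alternative (co)algebras), the only thing left to check is the pair of compatibility conditions \eqref{eq:LB8} and \eqref{eq:LB9} for the joint multiplication and comultiplication of $E:=A\lrbiprod H$.

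The plan is to evaluate both identities
\[
\Delta_E\bigl((a+x)(b+y)\bigr)=\Delta_E(a+x)\bullet(b+y)+\tau\Delta_E(a+x)\bullet(b+y)-(b+y)\bullet\tau\Delta_E(a+x)+(a+x)\bullet\Delta_E(b+y)+[\Delta_E(b+y),a+x],
\]
\[
\Delta_E\bigl((b+y)(a+x)\bigr)+\tau\Delta_E\bigl((b+y)(a+x)\bigr)=(b+y)\bullet\Delta_E(a+x)+(b+y)\cdot\tau\Delta_E(a+x)+\Delta_E(b+y)\bullet(a+x)+\tau\Delta_E(b+y)\cdot(a+x),
\]
for arbitrary $a,b\in A$ and $x,y\in H$, expanding the product by \eqref{sp01}-style formulas of the bicrossed product and the coproduct by the bicrossed coproduct formula of Lemma~\ref{lem1}.

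The key step is to separate both sides according to the four components of $E\otimes E\cong (A\otimes A)\oplus(A\otimes H)\oplus(H\otimes A)\oplus(H\otimes H)$. When we restrict to the $A\otimes A$-component of the first identity (resp.\ the $H\otimes H$-component), all the "mixed" summands disappear and we recover exactly axiom (BB1) for $A$ as a braided alternative bialgebra in ${}^{H}_{H}\mathcal{M}{}^{H}_{H}$ (resp.\ (BB3) for $H$ as a braided alternative bialgebra in ${}^{A}_{A}\mathcal{M}{}^{A}_{A}$). Analogously, the $A\otimes A$ and $H\otimes H$ components of the second identity yield (BB2) and (BB4). These four parts cost nothing extra, since they are already assumed.

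The remaining mixed components $A\otimes H$ and $H\otimes A$ of the two identities are where the new information lives. The first identity applied to four types of arguments $(a)(b)$, $(a)(y)$, $(x)(b)$, $(x)(y)$, together with the analogous breakdown of the second identity, produces precisely the twenty-four relations (DM1)--(DM24): the conditions of types (DM1)--(DM4) come from the "pure" products $ab$ and $xy$ landing in the mixed components via the coactions $\phi,\psi,\rho,\gamma$; the conditions (DM5)--(DM8) and (DM9)--(DM12) come from comultiplying the cross-actions $x\ppr b$, $a\ppl y$, $a\trr y$, $x\trl b$; and the remaining (DM13)--(DM24) similarly arise from the second identity which involves $\tau\Delta_E$. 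In each case one equates the $A\otimes H$-part (or $H\otimes A$-part) of the left and right sides and simplifies, absorbing the purely structural contributions already accounted for by the braided bialgebra axioms (BB1)--(BB4) and the alternative Hopf bimodule axioms (HM1)--(HM6) and (HM1$'$)--(HM6$'$).

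The main obstacle is purely bookkeeping: each of the two identities expands into dozens of terms once the product and coproduct are substituted, and sorting these by tensor-component and recognising which combinations coincide with which (DMi) requires organising the calculation along the same lines as in the proof of Theorem~\ref{thm-main0}, one component at a time. There is no conceptual difficulty beyond this, since each (DMi) is exactly designed to kill the obstruction from one specific component. Thus, assuming all (DM1)--(DM24) hold, both bialgebra axioms are satisfied, and conversely if $A\lrbiprod H$ is an alternative bialgebra, matching components forces each of (DM1)--(DM24), concluding the proof.
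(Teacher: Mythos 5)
Your proposal is correct and follows essentially the same route as the paper: the paper omits a direct proof of Theorem~\ref{main1}, deriving it as the cocycle-free special case of Theorem~\ref{main2}, whose proof is exactly the expansion of the two compatibility identities \eqref{eq:LB8}--\eqref{eq:LB9} followed by a term-by-term comparison sorted by tensor component and by the type of arguments, as you describe. Your observation that the pure $A\otimes A$ and $H\otimes H$ components of the pure products reproduce (BB1)--(BB4) while the remaining components yield (DM1)--(DM24) matches how the paper's computation for Theorem~\ref{main2} is organised (there producing (CDM1)--(CDM28)).
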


The proof of the above Theorem \ref{main1} is omitted since it is a special case of Theorem \ref{main2} in next subsection. 

\subsection{Cocycle bicrossproduct  alternative bialgebras}\label{subsetion-4.2}

In this section, we construct cocycle bicrossproduct alternative bialgebras, which is a generalization of double cross biproduct.

Let $A, H$ be both alternative  algebras and alternative  coalgebras.   For $a, b\in A$, $x, y\in H$,  we denote maps
\begin{align*}
&\sigma: H\otimes H \to A, \quad \theta: A\otimes A \to H,\\
&P: A  \to H\otimes H, \quad  Q: H \to A\otimes A,
\end{align*}
by
\begin{eqnarray*}
&& \sigma (x,y)  \in  A, \quad \theta(a, b) \in H,\\
&& P(a)=\sum a\ppi\ot  a\pii, \quad Q(x) = \sum x\qi \ot x\qii.
\end{eqnarray*}

A bilinear map $\si: H\ot H\to A$ is called a cocycle on $H$ if
\begin{enumerate}
\item[(CC1)] $\sigma(x y, z)+\sigma(x, y)\ppl z-x \ppr \sigma(y, z)-{\sigma}(x, y z)\\
=-\sigma(y, x)\ppl z-\sigma(y x, z)+y\ppr\sigma(x, z)+\sigma(y, xz),$
\item[(CC2)] $\sigma(x y, z)+\sigma(x, y)\ppl z-x \ppr \sigma(y, z)-{\sigma}(x, y z)\\
=-\sigma(x, z)\ppl y-\sigma(x z, y)+x\ppr\sigma(z, y)+\sigma(x, zy).$
\end{enumerate}

A bilinear map $\theta: A\ot A\to H$ is called a cocycle on $A$ if
\begin{enumerate}
\item[(CC3)] $\theta(a b, c)+\theta(a, b) \triangleleft c-a\trr \theta(b, c)-\theta(a, b c)\\
=-\theta(b a, c)-\theta(b, a) \triangleleft c+b\trr \theta(a, c)+\theta(b, a c),$
\item[(CC4)] $\theta(a b, c)+\theta(a, b) \triangleleft c-a\trr \theta(b, c)-\theta(a, b c)\\
=-\theta(a c, b)-\theta(a, c) \triangleleft b+a\trr \theta(c, b)+\theta(a, cb).$
\end{enumerate}

A bilinear map $P: A\to H\ot H$ is called a cycle on $A$ if
\begin{enumerate}
\item[(CC5)]  $\Delta_H(a\ppi)\ot a\pii+P(a\lmoo)\ot a\mi-a\lmoi\ot P(a\lmoo)-a\ppi\ot \Delta_H(a\pii)\\
=-\tau_{12}\big(\Delta_H(a\ppi)\ot a\pii+P(a\lmoo)\ot a\mi-a\lmoi\ot P(a\lmoo)-a\ppi\ot \Delta_H(a\pii)\big)$,
\item[(CC6)]  $\Delta_H(a\ppi)\ot a\pii+P(a\lmoo)\ot a\mi-a\lmoi\ot P(a\lmoo)-a\ppi\ot \Delta_H(a\pii)\\
=-\tau_{23}\big(\Delta_H(a\ppi)\ot a\pii+P(a\lmoo)\ot a\mi-a\lmoi\ot P(a\lmoo)-a\ppi\ot \Delta_H(a\pii)\big)$.
\end{enumerate}

A bilinear map $Q: H\to A\ot A$ is called a cycle on $H$ if
\begin{enumerate}
\item[(CC7)]  $ \Delta_A(x\qi)\ot x\qii+Q(x\boo)\ot x\bi-x\qi\ot \Delta_A(x\qii)-x\boi\ot Q(x\boo)\\
=-\tau_{12}\big(\Delta_A(x\qi)\ot x\qii+Q(x\boo)\ot x\bi-x\qi\ot \Delta_A(x\qii)-x\boi\ot Q(x\boo)\big)$,
\item[(CC8)]  $ \Delta_A(x\qi)\ot x\qii+Q(x\boo)\ot x\bi-x\qi\ot \Delta_A(x\qii)-x\boi\ot Q(x\boo)\\
=-\tau_{23}\big(\Delta_A(x\qi)\ot x\qii+Q(x\boo)\ot x\bi-x\qi\ot \Delta_A(x\qii)-x\boi\ot Q(x\boo)\big)$.
\end{enumerate}

In the following definitions, we introduce   the concept of cocycle
alternative  algebras and cycle alternative  coalgebras, which are  in fact not really
ordinary alternative  algebras and alternative  coalgebras, but generalized ones.

\begin{definition}
(i): Let $\si$ be a cocycle on a vector space  $H$ equipped with multiplication $H \ot H \to H$, satisfying the
following cocycle associative identities:
\begin{enumerate}
\item[(CC9)] $(x y) z+\sigma(x, y) \trr z-x(y z)-x \triangleleft \sigma(y, z)\\
=-(y x) z-\sigma(y, x) \trr z+y(x z)+y \triangleleft \sigma(x, z)$,
\item[(CC10)] $(x y) z+\sigma(x, y) \trr z-x(y z)-x \triangleleft \sigma(y, z)\\
=-(x z) y-\sigma(x, z) \trr y+x(z y)+x \triangleleft \sigma(z, y)$.
\end{enumerate}
Then  $H$ is called a  cocycle $\si$-alternative algebra which is denoted by $(H, \sigma)$.

(ii): Let $\theta$ be a cocycle on a vector space $A$  equipped with a multiplication $A \ot A \to A$, satisfying the
following cocycle associative identities:
\begin{enumerate}
\item[(CC11)] $(a b) c+\theta(a, b) \ppr c-a(b c)-a\ppl \theta(b, c)\\
=-(b a) c-\theta(b, a) \ppr c+b(a c)+b\ppl \theta(a, c)$,
\item[(CC12)] $(a b) c+\theta(a, b) \ppr c-a(b c)-a\ppl \theta(b, c)\\
=-(a c) b-\theta(a, c) \ppr b+a(c b)+a\ppl \theta(c, b)$.
\end{enumerate}
Then  $A$ is called a cocycle  $\theta$-alternative  algebra which is denoted by $(A, \theta)$.

(iii) Let $P$ be a cycle on a vector space  $H$ equipped with a comultiplication $\Delta: H \to H \ot H$, satisfying the
following cycle coassociative identities:
\begin{enumerate}
\item[(CC13)] $\Delta_H(x\li)\ot x\lii+ P(x\boi) \ot x\boo-x_1\ot \Delta_H(x_2)-x\boo\ot P(x\bi)\\
=-\tau_{12}\big(\Delta_H(x\li)\ot x\lii+ P(x\boi) \ot x\boo-x_1\ot \Delta_H(x_2)-x\boo\ot P(x\bi)\big)$,
\item[(CC14)] $\Delta_H(x\li)\ot x\lii+ P(x\boi) \ot x\boo-x_1\ot \Delta_H(x_2)-x\boo\ot P(x\bi)\\
=-\tau_{23}\big(\Delta_H(x\li)\ot x\lii+ P(x\boi) \ot x\boo-x_1\ot \Delta_H(x_2)-x\boo\ot P(x\bi)\big)$.
\end{enumerate}
\noindent Then  $H$ is called a  cycle $P$-alternative coalgebra which is denoted by $(H, P)$.

(iv) Let $Q$ be a cycle on a vector space  $A$ equipped with a   commutativity  map $\Delta: A \to A \ot A$, satisfying the
following cycle coassociative identities:
\begin{enumerate}
\item[(CC15)] $\Delta_A(a\li)\ot a\lii+Q(a\lmoi)\ot a\lmoo-a\li\ot \Delta_A(a\lii)-a\mo\ot Q(a\mi)\\
=-\tau_{12}\big(\Delta_A(a\li)\ot a\lii+Q(a\lmoi)\ot a\lmoo-a\li\ot \Delta_A(a\lii)-a\mo\ot Q(a\mi)\big)$,
\item[(CC16)] $\Delta_A(a\li)\ot a\lii+Q(a\lmoi)\ot a\lmoo-a\li\ot \Delta_A(a\lii)-a\mo\ot Q(a\mi)\\
=-\tau_{23}\big(\Delta_A(a\li)\ot a\lii+Q(a\lmoi)\ot a\lmoo-a\li\ot \Delta_A(a\lii)-a\mo\ot Q(a\mi)\big)$.
\end{enumerate}
\noindent Then  $A$ is called a  cycle $Q$-alternative coalgebra which is denoted by $(A, Q)$.
\end{definition}

\begin{definition}
A  \emph{cocycle cross product system } is a pair of $\theta$-alternative algebra $A$ and $\sigma$-alternative algebra $H$,
where $\si: H\ot H\to A$ is a cocycle on $H$, $\theta: A\ot A\to H$ is a cocycle on $A$ and the following conditions are satisfied:
\begin{enumerate}
\item[(CP1)] $(ab+b a)\ppl x+\sigma(\theta(a, b), x)+\sigma(\theta(b, a), x)\\
=a(b\ppl x)+a\ppl(b\trr x)+b(a\ppl x)+b\ppl(a\trr x)$,

\item[(CP2)] $x\ppr (a b)+a(x\ppr b)+a\ppl(x\trl b)+\sigma(x, \theta(a, b))\\
=(x\ppr a+a\ppl x)b+(x\trl a+a\trr x)\ppr b$,

\item[(CP3)] $ a\ppl (x y)+a\sigma(x, y)+x\ppr(a\ppl y)+\sigma(x, a\trr y)\\
=(a\ppl x+x\ppr a)\ppl y+\sigma(a\trr x, y)+\sigma(x\trl a, y)$,

\item[(CP4)] $(x y+y x)\ppr a+(\sigma(x, y)+\sigma(y, x))a\\
=x\ppr(y\ppr a)+\sigma(x, y\trl a)+y\ppr(x\ppr a)+\sigma(y, x\trl a)$,

\item[(CP5)] $(ab +ba)\trr x+(\theta(a, b) +\theta(b, a))x\\
=a\trr (b\trr x)+\theta(a, b\ppl x)+b\trr (a\trr x)+\theta(b, a\ppl x)$,

\item[(CP6)] $x \trl (a b)+a\trr(x\trl b)+\theta(a, x\ppr b)+x\theta(a, b)\\
=(x \trl a+a\trr x) \trl b+\theta(x \ppr a, b)+\theta(a\ppl x, b)$,

\item[(CP7)] $ a \trr (x y)+x(a\trr y)+x\trl(a\ppl y)+\theta(a, \sigma(x, y))\\
=(a \trr x+x\trl a) y+(a\ppl x+x\ppr a) \trr y$,

\item[(CP8)] $(x y +y x )\trl a+\theta(\sigma(x, y), a)+\theta(\sigma(y, x), a)\\
=x \trl(y \ppr a)+x(y \trl a)+y \trl(x \ppr a)+y(x \trl a)$,

\item[(CP9)] $x \ppr (a b+ b a)+\sigma(x, \theta(a, b))+\sigma(x, \theta(b, a))\\
=(x\ppr a) b+(x\trl a) \ppr b+(x\ppr b) a+(x\trl b) \ppr a$,

\item[(CP10)] $(ab)\ppl x+(a\ppl x)b+(a\trr x)\ppr b+\sigma(\theta(a, b), x)\\
=a(b\ppl x+x\ppr b)+a\ppl (b \trr x+x\trl b)$,

\item[(CP11)] $(x y) \ppr a+(x\ppr a)\ppl y+\sigma(x\trl a, y)+\sigma(x, y)a\\
=x\ppr(y\ppr a+a\ppl y)+\sigma(x, y\trl a)+\sigma(x, a\trr y)$,

\item[(CP12)] $a\ppl (x y+ y x)+a( \sigma(x, y)+ \sigma(y, x))\\
=(a\ppl x)\ppl y+\sigma(a \trr x, y)+(a\ppl y)\ppl x+\sigma(a \trr y, x)$,

\item[(CP13)] $x \trl (a b+ b a)+x(\theta(a, b)+\theta(b, a))\\
=(x\trl a)\trl b+\theta(x\ppr a, b) +(x\trl b)\trl a+\theta(x\ppr b, a) $,

\item[(CP14)] $(a b)\trr x+\theta(a, b)x+(a\trr x)\trl b+\theta(a\ppl x, b)\\
=a\trr(b\trr x+x\trl b)+\theta(a, b\ppl x)+\theta(a, x\ppr b)$,

\item[(CP15)] $(x y)\trl a+(x\trl a)y+(x\ppr a)\trr y+\theta(\sigma(x, y), a)\\
=x(y\trl a+a\trr y)+x\trl(y\ppr a+a\ppl y)$,

\item[(CP16)] $a\trr (x y+ y x)+\theta(a, \sigma(x, y))+\theta(a, \sigma(y, x))\\
=(a\trr x)y+(a\ppl x)\trr y+(a\trr y)x+(a\ppl y)\trr x$.
\end{enumerate}
\end{definition}

\begin{lemma}
Let $(A, H)$ be  a  cocycle cross product system.
If we define $E=A_{\sigma}\#_{\theta} H$ as the vector space $A\oplus H$ with the   multiplication
\begin{align}
(a+x)(b+ y)=\big(ab+x\ppr b+a\ppl y+\sigma(x, y)\big)+ \big(xy+x\trl b+a\trr y+\theta(a, b)\big).
\end{align}
Then $E=A_{\sigma}\#_{\theta} H$ forms  an alternative algebra  which is called the cocycle cross product alternative algebra.
\end{lemma}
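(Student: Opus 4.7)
The plan is to verify directly that the two defining identities of an alternative algebra,
\[
(u,v,w)+(v,u,w)=0,\qquad (u,v,w)+(u,w,v)=0,
\]
hold on $E=A\oplus H$ for $u=a+x$, $v=b+y$, $w=c+z$, where the associator is computed with the multiplication given in the lemma. The strategy is purely computational: expand both associators, separate the result into its $A$-component and its $H$-component, and match what appears against the hypotheses.

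First I would compute $((a+x)(b+y))(c+z)$ and $(a+x)((b+y)(c+z))$ by two applications of the product formula. Each of these produces sixteen terms, eight landing in $A$ and eight in $H$, involving the multiplications of $A$ and $H$, the four actions $\ppr,\ppl,\trr,\trl$, and the cocycles $\sigma,\theta$. Next, subtracting gives the associator $((a+x),(b+y),(c+z))$ decomposed as a sum in $A$ plus a sum in $H$. I would then write out the required identities
\[
((a+x),(b+y),(c+z))+((b+y),(a+x),(c+z))=0
\]
and
\[
((a+x),(b+y),(c+z))+((a+x),(c+z),(b+y))=0,
\]
and collect terms according to their ``type''. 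By type I mean grouping terms of the form $a\cdot b\cdot c$ with those from $\theta(a,b)\ppr c$ and $a\ppl\theta(b,c)$ (all of these land in $A$ and involve three $A$-inputs), terms of the form $x\cdot y\cdot z$ with $\sigma(x,y)\trr z$ and $x\trl\sigma(y,z)$ (landing in $H$), and the mixed terms with exactly one $H$-input or exactly one $A$-input.

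The type-by-type matching then yields exactly the hypotheses: terms with three $A$-inputs landing in $A$ give (CC11)--(CC12); terms with three $H$-inputs landing in $H$ give (CC9)--(CC10); terms of form $(aa)h$ or $h(aa)$ in either component yield (CP1), (CP5), (CP10), (CP13), (CP14); terms of form $(ah)a$ in either component yield (CP2), (CP6), (CP9); terms of form $(hh)a$ or $a(hh)$ in either component yield (CP4), (CP8), (CP12), (CP15), (CP16); and the remaining mixed associator terms yield (CP3), (CP7), (CP11). In each case, writing one relation for the left-alternativity and one for the right-alternativity accounts for the pairing of conditions.

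The main obstacle is purely bookkeeping: there are essentially sixteen distinct ``type buckets'' (cross products of the eight term-shapes $(A,A,A)$, $(A,A,H)$, $\ldots$ with the two output components), so one must be careful to track signs from the associator and not conflate, say, $\sigma(\theta(a,b),x)$ with $\theta(a,b)\ppr x$ when they arise from different parenthesizations. Once the matching is carried out and verified on both sides of both alternative identities, the lemma follows, since each (CP$k$) and each cocycle-alternativity condition (CC9)--(CC12) contributes exactly the cancellation needed in its bucket, and no other conditions appear.
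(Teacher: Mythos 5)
Your proposal follows essentially the same route as the paper: expand $\bigl((a+x)(b+y)\bigr)(c+z)$ and $(a+x)\bigl((b+y)(c+z)\bigr)$, form the two alternative identities, split into $A$- and $H$-components, and match term types against the hypotheses, with the first identity accounting for (CP1)--(CP8) and the second for (CP9)--(CP16). One bookkeeping slip: your bucket list omits the terms with three $H$-inputs landing in $A$ (namely $\sigma(xy,z)$, $\sigma(x,y)\ppl z$, $x\ppr\sigma(y,z)$, $\sigma(x,yz)$) and the terms with three $A$-inputs landing in $H$ (namely $\theta(ab,c)$, $\theta(a,b)\trl c$, $a\trr\theta(b,c)$, $\theta(a,bc)$); these are cancelled by the cocycle conditions (CC1)--(CC4), which are part of the hypothesis that $\sigma$ and $\theta$ are cocycles, so the argument still closes, but the claim that ``no other conditions appear'' is not quite accurate.
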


\begin{proof} First, we need to check the first equation
$$\begin{aligned}
&\big((a+ x) (b+ y)\big) (c+ z)-(a+ x)\big( (b+ y) (c+ z)\big)\\
  =& -\big( (b+y) (a+x)\big) (c+z)+(b+y)\big ((a+x) (c+z)\big).\\
\end{aligned}
$$
By direct computations, the left hand side is equal to
\begin{eqnarray*}
&&\big((a+ x) (b+ y)\big) (c+ z)-(a+ x)\big( (b+ y) (c+ z)\big)\\
&=&\big(a b+x \ppr b+a \ppl y+\sigma(x, y)+ x y+x\trl b+a\trr y+\theta(a, b)\big) (c+ z)\\
&&-(a+ x)(b c+y\ppr c+b \ppl z+\sigma(y, z)+ y z+y\trl c+b\trr z+\theta(b, c))\\
&=&(a b)c+(x\ppr b)c+(a\ppl y)c+\sigma(x, y)c+(xy)\ppr c+(x\trl b)\ppr c\\
&&+(a\trr y)\ppr c+\theta(a, b)\ppr c+(a b)\ppl z+(x\ppr b)\ppl z+(a\ppl y)\ppl z\\
&&+\sigma(x, y)\ppl z+\sigma( x y, z)+\sigma(x\trl b, z)+\sigma(a\trr y, z)+\sigma(\theta(a, b), z)\\
&&+(xy)z+(x\trl b)z+(a\trr y)z+\theta(a, b)z+(x y)\trl c+(x\trl b)\trl c\\
&&+(a\trr y)\trl c+\theta(a, b)\trl c+(a b)\trr z+(x \ppr b)\trr z+(a \ppl y)\trr z\\
&&+\sigma(x, y)\trr z+\theta(a b, c)+\theta(x \ppr b, c)+\theta(a \ppl y, c)+\theta(\sigma(x, y), c)\\
&&-a(b c)-a(y\ppr c)-a(b\ppl z)-a\sigma(y, z)-x\ppr( b c)\\
&&-x\ppr (y\ppr c)-x\ppr(b\ppl z)-x\ppr\sigma(y, z)-a\ppl (yz)\\
&&-a\ppl(y\trl c)-a\ppl(b\trr z)-a\ppl\theta(b, c)-\sigma(x, y z)-\sigma(x, y\trl c)\\
&&-\sigma(x, b\trr z)-\sigma(x, \theta(b, c))-x(yz)-x(y\trl c)-x(b\trr z)\\
&&-x\theta(b, c)-x\trl (b c)-x\trl(y\ppr c)-x\trl(b \ppl z)-x\trl\sigma(y,z)\\
&&-a\trr (y z)-a\trr(y\trl c)-a\trr(b\trr z)-a\trr\theta(b, c)-\theta(a, b c)\\
&&-\theta(a, y\ppr c)-\theta(a, b \ppl z)-\theta(a,\sigma(y, z)),
\end{eqnarray*}
and the right hand side is equal to
\begin{eqnarray*}
&&-\big( (b+y) (a+x)\big) (c+z)+(b+y)\big ((a+x) (c+z)\big)\\
&=&-\big(b a+y \ppr a+b \ppl x+\sigma(y, x)+ y x+y\trl a+b\trr x+\theta(b, a)\big) (c+ z)\\
&&+(b+ y)(a c+x\ppr c+a \ppl z+\sigma(x, z)+ x z+x\trl c+a\trr z+\theta(a, c))\\
&=&-(b a)c-(y\ppr a)c-(b\ppl x)c-\sigma(y, x)c-(yx)\ppr c-(y\trl a)\ppr c\\
&&-(b\trr x)\ppr c-\theta(b, a)\ppr c-(b a)\ppl z-(y\ppr a)\ppl z-(b\ppl x)\ppl z\\
&&-\sigma(y, x)\ppl z-\sigma( y x, z)-\sigma(y\trl a, z)-\sigma(b\trr x, z)-\sigma(\theta(b, a), z)\\
&&-(yx)z-(y\trl a)z-(b\trr x)z-\theta(b, a)z-(y x)\trl c-(y\trl a)\trl c\\
&&-(b\trr x)\trl c-\theta(b, a)\trl c-(b a)\trr z-(y \ppr a)\trr z-(b \ppl x)\trr z\\
&&-\sigma(y, x)\trr z-\theta(ba, c)-\theta(y \ppr a, c)-\theta(b \ppl x, c)-\theta(\sigma(y, x), c)\\
&&+b(a c)+b(x\ppr c)+b(a\ppl z)+b\sigma(x, z)+y\ppr (a c)\\
&&+y\ppr (x\ppr c)+y\ppr(a\ppl z)+y\ppr\sigma(x, z)+b\ppl (xz)\\
&&+b\ppl(x\trl c)+b\ppl(a\trr z)+b\ppl\theta(a, c)+\sigma(y, x z)+\sigma(y, x\trl c)\\
&&+\sigma(y, a\trr z)+\sigma(y, \theta(a,c))+y(xz)+y(x\trl c)+y(a\trr z)\\
&&+y\theta(a, c)+y\trl (a c)+y\trl(x\ppr c)+y\trl(a \ppl z)+y\trl\sigma(x, z)\\
&&+b\trr (x z)+b\trr(x\trl c)+b\trr(a\trr z)+b\trr\theta(a, c)+\theta(b, a c)\\
&&+\theta(b, x\ppr c)+\theta(b, a \ppl z)+\theta(b, \sigma(x, z)).
\end{eqnarray*}
Thus the two sides are equal to each other if and only if (CP1)--(CP8) hold.

Next,we check the second equation
 $$\begin{aligned}
&\big((a+ x) (b+ y)\big) (c+z)-(a+ x) \big((b+ y) (c+ z)\big)\\
  =& -\big( (a+x)(c+z)\big) (b+y)+(a+x) \big( (c+z)(b+y)\big).\\
\end{aligned}
$$
By direct computations, the left hand side is equal to
\begin{eqnarray*}
&&\big((a+ x) (b+ y)\big) (c+ z)-(a+ x)\big( (b+ y) (c+ z)\big)\\
&=&\big(a b+x \ppr b+a \ppl y+\sigma(x, y)+ x y+x\trl b+a\trr y+\theta(a, b)\big) (c+ z)\\
&&-(a+ x)(b c+y\ppr c+b \ppl z+\sigma(y, z)+ y z+y\trl c+b\trr z+\theta(b, c))\\
&=&(a b)c+(x\ppr b)c+(a\ppl y)c+\sigma(x, y)c+(xy)\ppr c+(x\trl b)\ppr c\\
&&+(a\trr y)\ppr c+\theta(a,b)\ppr c+(a b)\ppl z+(x\ppr b)\ppl z+(a\ppl y)\ppl z\\
&&+\sigma(x, y)\ppl z+\sigma( x y, z)+\sigma(x\trl b, z)+\sigma(a\trr y, z)+\sigma(\theta(a, b), z)\\
&&+(xy)z+(x\trl b)z+(a\trr y)z+\theta(a, b)z+(x y)\trl c+(x\trl b)\trl c\\
&&+(a\trr y)\trl c+\theta(a, b)\trl c+(a b)\trr z+(x \ppr b)\trr z+(a \ppl y)\trr z\\
&&+\sigma(x, y)\trr z+\theta(a b, c)+\theta(x \ppr b, c)+\theta(a \ppl y, c)+\theta(\sigma(x, y), c)\\
&&-a(b c)-a(y\ppr c)-a(b\ppl z)-a\sigma(y, z)-x\ppr (b c)\\
&&-x\ppr (y\ppr c)-x\ppr(b\ppl z)-x\ppr\sigma(y, z)-a\ppl (yz)\\
&&-a\ppl(y\trl c)-a\ppl(b\trr z)-a\ppl\theta(b, c)-\sigma(x, y z)-\sigma(x, y\trl c)\\
&&-\sigma(x, b\trr z)-\sigma(x, \theta(b, c))-x(yz)-x(y\trl c)-x(b\trr z)\\
&&-x\theta(b, c)-x\trl (b c)-x\trl(y\ppr c)-x\trl(b \ppl z)-x\trl\sigma(y, z)\\
&&-a\trr (y z)-a\trr(y\trl c)-a\trr(b\trr z)-a\trr\theta(b, c)-\theta(a, b c)\\
&&-\theta(a, y\ppr c)-\theta(a, b \ppl z)-\theta(a, \sigma(y,z)),
\end{eqnarray*}
and the right hand side is equal to
\begin{eqnarray*}
&& -\big( (a+x)(c+z)\big) (b+y)+(a+x) \big( (c+z)(b+y)\big)\\
&=&-\big(a c+x \ppr c+a \ppl z+\sigma(x, z)+ x z+x\trl c+a\trr z+\theta(a, c)\big) (b+ y)\\
&&+(a+ x)(c b+z\ppr b+c \ppl y+\sigma(z, y)+ z y+z\trl b+c\trr y+\theta(c, b))\\
&=&-(a c)b-(x\ppr c)b-(a\ppl z)b-\sigma(x, z)b-(xz)\ppr b-(x\trl c)\ppr b\\
&&-(a\trr z)\ppr b-\theta(a, c)\ppr b-(a c)\ppl y-(x\ppr c)\ppl y-(a\ppl z)\ppl y\\
&&-\sigma(x, z)\ppl y-\sigma( x z, y)-\sigma(x\trl c, y)-\sigma(a\trr z, y)-\sigma(\theta(a, c), y)\\
&&-(xz)y-(x\trl c)y-(a\trr z)y-\theta(a, c)y-(x z)\trl b-(x\trl c)\trl b\\
&&-(a\trr z)\trl b-\theta(a, c)\trl b-(a c)\trr y-(x \ppr c)\trr y-(a \ppl z)\trr y\\
&&-\sigma(x, z)\trr y-\theta(a c, b)-\theta(x \ppr c, b)-\theta(a \ppl z, b)-\theta(\sigma(x, z), b)\\
&&+a(c b)+a(z\ppr b)+a(c\ppl y)+a\sigma(z, y)+x\ppr (c b)\\
&&+x\ppr (z\ppr b)+x\ppr(c\ppl y)+x\ppr\sigma(z, y)+a\ppl (zy)\\
&&+a\ppl(z\trl b)+a\ppl(c\trr y)+a\ppl\theta(c, b)+\sigma(x, z y)+\sigma(x, z\trl b)\\
&&+\sigma(x, c\trr y)+\sigma(x, \theta(c, b))+x(zy)+x(z\trl b)+x(c\trr y)\\
&&+x\theta(c, b)+x\trl (c b)+x\trl(z\ppr b)+x\trl(c \ppl y)+x\trl\sigma(z, y)\\
&&+a\trr (z y)+a\trr(z\trl b)+a\trr(c\trr y)+a\trr\theta(c, b)+\theta(a, c b)\\
&&+\theta(a, z\ppr b)+\theta(a, c \ppl y)+\theta(a, \sigma(z,y)).
\end{eqnarray*}
Thus the two sides are equal to each other if and only if (CP9)--(CP16) hold.
\end{proof}

\begin{definition}
A  \emph{cycle cross coproduct system } is a pair of   $P$-alternative coalgebra $A$ and  $Q$-alternative coalgebra $H$ ,  where $P: A\to H\ot H$ is a cycle on $A$,  $Q: H\to A\ot A$ is a cycle over $H$ such that following conditions are satisfied:
\begin{enumerate}
\item[(CCP1)] $\phi(a\li)\ot a\lii+\gamma(a\loi)\ot a\loo-a\loi\ot \Delta_{A}(a\loo)-a\ppi\ot Q(a\pii)\\
=-\tau_{12}\big(\psi(a\li)\ot a\lii+\rho(a\loi)\ot a\loo-a\li\ot\phi(a\lii)-a\loo\ot\gamma(a\lmi)\big)$,

\item[(CCP2)] $P(a\li)\ot a\lii+\Delta_{H}(a\loi)\ot a\loo-a\loi\ot\phi(a\loo)-a\ppi\ot\gamma(a\pii)\\
=-\tau_{12}\big(P(a\li)\ot a\lii+\Delta_{H}(a\loi)\ot a\loo-a\loi\ot\phi(a\loo)-a\ppi\ot\gamma(a\pii)\big)$,

\item[(CCP3)] $\Delta_{A}(a\loo)\ot a\lmi+Q(a\ppi)\ot a\pii-a\li\ot\psi(a\lii)-a\loo\ot\rho(a\lmi)\\
=-\tau_{12}\big(\Delta_{A}(a\loo)\ot a\lmi+Q(a\ppi)\ot a\pii-a\li\ot\psi(a\lii)-a\loo\ot\rho(a\lmi)\big)$,

\item[(CCP4)] $\psi(a\loo)\ot a\lmi+\rho(a\ppi)\ot a\pii-a\li\ot P(a\lii)-a\loo\ot\Delta_{H}(a\lmi)\\
=-\tau_{12}\big(\phi(a\loo)\ot a\lmi+\gamma(a\ppi)\ot a\pii-a\loi\ot\psi(a\loo)-a\ppi\ot\rho(a\pii)\big)$,

\item[(CCP5)] $\gamma(x\boo)\ot x\bi+\phi(x\qi)\ot x\qii-x\li\ot Q(x\lii)-x\boo\ot\Delta_{A}(x\bi)\\
=-\tau_{12}\big(\rho(x\boo)\ot x\bi+\psi(x\qi)\ot x\qii-x\boi\ot\gamma(x\boo)-x\qi\ot\phi(x\qii)\big)$,

\item[(CCP6)] $\Delta_{H}(x\boo)\ot x\bi+P(x\qi)\ot x\qii-x\li\ot \gamma(x\lii)-x\boo\ot\phi(x\bi)\\
=-\tau_{12}\big(\Delta_{H}(x\boo)\ot x\bi+P(x\qi)\ot x\qii-x\li\ot \gamma(x\lii)-x\boo\ot\phi(x\bi)\big)$,

\item[(CCP7)] $Q(x\li)\ot x\lii+\Delta_{A}(x\boi)\ot x\boo-x\boi\ot \rho(x\boo)-x\qi\ot\psi(x\qii)\\
=-\tau_{12}\big(Q(x\li)\ot x\lii+\Delta_{A}(x\boi)\ot x\boo-x\boi\ot \rho(x\boo)-x\qi\ot\psi(x\qii)\big)$,

\item[(CCP8)] $\rho(x\li)\ot x\lii+\psi(x\boi)\ot x\boo-x\boi\ot\Delta_{H}(x\boo)-x\qi\ot P(x\qii)\\
=-\tau_{12}\big(\gamma(x\li)\ot x\lii+\phi(x\boi)\ot x\boo-x\li\ot\rho(x\lii)-x\boo\ot\psi(x\bi)\big)$,

\item[(CCP9)] $\phi(a\li)\ot a\lii+\gamma(a\loi)\ot a\loo-a\loi\ot \Delta_{A}(a\loo)-a\ppi\ot Q(a\pii)\\
=-\tau_{23}\big(\phi(a\li)\ot a\lii+\gamma(a\loi)\ot a\loo-a\loi\ot \Delta_{A}(a\loo)-a\ppi\ot Q(a\pii)\big)$,

\item[(CCP10)] $P(a\li)\ot a\lii+\Delta_{H}(a\loi)\ot a\loo-a\loi\ot\phi(a\loo)-a\ppi\ot\gamma(a\pii)\\
=-\tau_{23}\big(\phi(a\loo)\ot a\lmi+\gamma(a\ppi)\ot a\pii-a\loi\ot\psi(a\loo)-a\ppi\ot\rho(a\pii)\big)$,

\item[(CCP11)] $\Delta_{A}(a\loo)\ot a\lmi+Q(a\ppi)\ot a\pii-a\li\ot\psi(a\lii)-a\loo\ot\rho(a\lmi)\\
=-\tau_{23}\big(\psi(a\li)\ot a\lii+\rho(a\loi)\ot a\loo-a\li\ot\phi(a\lii)-a\loo\ot\gamma(a\lmi)\big)$,

\item[(CCP12)] $\psi(a\loo)\ot a\lmi+\rho(a\ppi)\ot a\pii-a\li\ot P(a\lii)-a\loo\ot\Delta_{H}(a\lmi)\\
=-\tau_{23}\big(\psi(a\loo)\ot a\lmi+\rho(a\ppi)\ot a\pii-a\li\ot P(a\lii)-a\loo\ot\Delta_{H}(a\lmi)\big)$,

\item[(CCP13)] $\gamma(x\boo)\ot x\bi+\phi(x\qi)\ot x\qii-x\li\ot Q(x\lii)-x\boo\ot\Delta_{A}(x\bi)\\
=-\tau_{23}\big(\gamma(x\boo)\ot x\bi+\phi(x\qi)\ot x\qii-x\li\ot Q(x\lii)-x\boo\ot\Delta_{A}(x\bi)\big)$,

\item[(CCP14)] $\Delta_{H}(x\boo)\ot x\bi+P(x\qi)\ot x\qii-x\li\ot \gamma(x\lii)-x\boo\ot\phi(x\bi)\\
=-\tau_{23}\big(\phi(x\boi)\ot x\boo+\gamma(x\li)\ot x\lii-x\li\ot\rho(x\lii)-x\boo\ot\psi(x\bi)\big)$,

\item[(CCP15)] $Q(x\li)\ot x\lii+\Delta_{A}(x\boi)\ot x\boo-x\boi\ot \rho(x\boo)-x\qi\ot\psi(x\qii)\\
=-\tau_{23}\big(\rho(x\boo)\ot x\bi+\psi(x\qi)\ot x\qii-x\boi\ot\gamma(x\boo)-x\qi\ot\phi(x\qii)\big)$,

\item[(CCP16)] $\rho(x\li)\ot x\lii+\psi(x\boi)\ot x\boo-x\boi\ot\Delta_{H}(x\boo)-x\qi\ot P(x\qii)\\
=-\tau_{23}\big(\rho(x\li)\ot x\lii+\psi(x\boi)\ot x\boo-x\boi\ot\Delta_{H}(x\boo)-x\qi\ot P(x\qii)\big)$.
\end{enumerate}
\end{definition}

\begin{lemma}\label{lem2} Let $(A, H)$ be  a  cycle cross coproduct system. If we define $E=A^{P}\# {}^{Q} H$ as the vector
space $A\oplus H$ with the   comultiplication
$$\Delta_{E}(a)=(\Delta_{A}+\phi+\psi+P)(a), \quad \Delta_{E}(x)=(\Delta_{H}+\rho+\gamma+Q)(x), $$
that is
$$\Delta_{E}(a)= a\li \ot a\lii+ a\moi \ot a\mo+a\mo\ot a\mi+a\ppi\ot a\pii,$$
$$\Delta_{E}(x)= x\li \ot x\lii+ x\boi \ot x\boo+x\boo \ot x\bi+x\qi\ot x\qii,$$
then  $A^{P}\# {}^{Q} H$ forms an alternative  coalgebra which we will call it the cycle cross coproduct  alternative coalgebra.
\end{lemma}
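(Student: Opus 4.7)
The plan is to verify the two alternative-coalgebra identities \eqref{eq:LB4} and \eqref{eq:LB5} directly for the comultiplication $\Delta_E$. Writing $C(c):=(\Delta_E\otimes\id)\Delta_E(c)-(\id\otimes\Delta_E)\Delta_E(c)$, the task is to prove $C(c)+\tau_{12}C(c)=0$ and $C(c)+\tau_{23}C(c)=0$ for every $c\in E$. Since $\Delta_E$ is defined piecewise on $E=A\oplus H$, linearity immediately reduces the proof to the two cases $c=a\in A$ and $c=x\in H$.

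The main technical device is a tri-grading argument. Decompose $E^{\otimes 3}=\bigoplus_{(X,Y,Z)\in\{A,H\}^3} X\otimes Y\otimes Z$ into its eight components. Both $\tau_{12}$ and $\tau_{23}$ preserve this direct sum (they only permute the labels within each triple and never mix $A$ with $H$), so each antisymmetry identity splits into eight independent stratum conditions. For $c=a\in A$ I expand $\Delta_E(a)=\Delta_A(a)+\phi(a)+\psi(a)+P(a)$, then apply $\Delta_E$ in each slot to obtain sixteen Sweedler-type summands on each side of $C(a)$; routing each summand to its correct $(X,Y,Z)$-stratum yields an explicit four-term formula for each $C_{XYZ}(a)$.

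The identifications are then one-for-one. For the diagonal strata, $C_{AAA}(a)=\Delta_A(a_1)\otimes a_2+Q(a_{(-1)})\otimes a_{(0)}-a_1\otimes\Delta_A(a_2)-a_{(0)}\otimes Q(a_{(1)})$ is $\tau_{12}$- and $\tau_{23}$-antisymmetric by the cycle coassociativity axioms~(CC15) and~(CC16), while $C_{HHH}(a)=P(a_{(0)})\otimes a_{(1)}+\Delta_H(a_{<1>})\otimes a_{<2>}-a_{(-1)}\otimes P(a_{(0)})-a_{<1>}\otimes\Delta_H(a_{<2>})$ is covered by~(CC5) and~(CC6). The six off-diagonal strata pair off under the transpositions---$(H,A,A)\leftrightarrow(A,H,A)$, $(H,H,A)\leftrightarrow(H,A,H)$, and their partners---and each pairing $C_{XYZ}(a)+\tau C_{YXZ}(a)=0$ is literally one of the hypotheses~(CCP1)--(CCP4) (for $\tau_{12}$) or~(CCP9)--(CCP12) (for $\tau_{23}$); for instance, computing $C_{HAA}(a)+\tau_{12}C_{AHA}(a)$ reproduces exactly the axiom~(CCP1). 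The second case $c=x\in H$ is entirely parallel: the diagonal strata are handled by~(CC7),~(CC8),~(CC13),~(CC14) and the six mixed strata by~(CCP5)--(CCP8) and~(CCP13)--(CCP16).

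The main obstacle is purely notational: the symbol $a_{(0)}$ denotes the $A$-leg of either $\phi$ or $\psi$ depending on context (and similarly $x_{[0]}$ for $\rho$ or $\gamma$), so one must keep careful track of which coaction a given Sweedler decoration refers to when matching the computed strata against the hypotheses. Once the grading bookkeeping is clear, no rearrangement or cancellation occurs: each stratum equals a single axiom of the cycle cross coproduct system, and the verification becomes mechanical.
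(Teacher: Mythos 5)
Your proof is correct and follows essentially the same route as the paper: a direct verification of \eqref{eq:LB4} and \eqref{eq:LB5} for $\Delta_E$ by expanding $(\Delta_E\ot\id)\Delta_E-(\id\ot\Delta_E)\Delta_E$ on $a\in A$ and $x\in H$ and matching the resulting terms one-for-one against (CCP1)--(CCP16), with the $\{A,H\}^{\otimes 3}$-grading serving as a clean bookkeeping device for the same term-by-term comparison the paper writes out in full. Your explicit observation that the diagonal strata are absorbed by the cycle coassociativity axioms (CC5)--(CC8) and (CC13)--(CC16) is a point the paper leaves implicit in the hypothesis that $(A,H)$ is a cycle cross coproduct system.
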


\begin{proof}
First, we have to check
$$
\begin{aligned}
 &(\Delta_E\ot\id)\Delta_E(a+x)-(\id\ot\Delta_E)\Delta_E(a+x)\\
 =&-\tau_{12}\big((\Delta_E\ot\id)\Delta_E(a+x)-(\id\ot\Delta_E)\Delta_E(a+x)\big).
 \end{aligned}
 $$
By direct computations, the left hand side is equal to
\begin{eqnarray*}
&&(\Delta_E\ot\id)\Delta_E(a+x)-(\id\ot\Delta_E)\Delta_E(a+x)\\
&=&(\Delta_E\ot\id)(a_{1} \otimes a_{2}+a_{(-1)} \otimes a_{(0)}+a_{(0)} \otimes a_{(1)}+a\ppi\ot a\pii+x_{1} \otimes x_{2}\\
&&+x_{[-1]} \otimes x_{[0]}+x_{[0]} \otimes x_{[1]}+x\qi\ot x\qii)-(\id\ot\Delta_E)(a_{1} \otimes a_{2}+a_{(-1)} \otimes a_{(0)}\\
&&+a_{(0)} \otimes a_{(1)}+a\ppi\ot a\pii+x_{1} \otimes x_{2}+x_{[-1]} \otimes x_{[0]}+x_{[0]} \otimes x_{[1]}+x\qi\ot x\qii)\\
&=&\Delta_{A}\left(a_{1}\right) \otimes a_{2}+\phi\left(a_{1}\right) \otimes a_{2}+\psi\left(a_{1}\right) \otimes a_{2}
+P(a\li)\ot a\lii\\
&&+\Delta_{H}\left(a_{(-1)}\right) \otimes a_{(0)}+\rho\left(a_{(-1)}\right) \otimes a_{(0)}+\gamma\left(a_{(-1)}\right) \otimes a_{(0)}
+Q(a\loi)\ot a\loo\\
&&+\Delta_{A}\left(a_{(0)}\right) \otimes a_{(1)}+\phi\left(a_{(0)}\right) \otimes a_{(1)}+\psi\left(a_{(0)}\right) \otimes a_{(1)}
+P(a\loo)\ot a\lmi\\
&&+\Delta_{H}(a\ppi)\ot a\pii+\rho(a\ppi)\ot a\pii+\gamma(a\ppi)\ot a\pii+Q(a\ppi)\ot a\pii\\
&&+\Delta_{H}\left(x_{1}\right) \otimes x_{2}+\rho\left(x_{1}\right) \otimes x_{2}+\gamma\left(x_{1}\right) \otimes x_{2}
+Q(x\li)\ot x\lii\\
&&+\Delta_{A}\left(x_{[-1])}\right) \otimes x_{[0]}+\phi\left(x_{[-1]}\right) \otimes x_{[0]}+\psi\left(x_{[-1]}\right) \otimes x_{[0]}
+P(x\boi)\ot x\boo\\
&&+\Delta_{H}\left(x_{[0]}\right) \otimes x_{[1]}+\rho\left(x_{[0]}\right) \otimes x_{[1]}+\gamma\left(x_{[0]}\right) \otimes x_{[1]}
+Q(x\boo)\ot x\bi\\
&&+\Delta_{A}(x\qi)\ot x\qii+\phi(x\qi)\ot x\qii+\psi(x\qi)\ot x\qii+P(x\qi)\ot x\qii\\
&&-a_{1} \otimes \Delta_{A}\left(a_{2}\right)-a_{1} \otimes \phi\left(a_{2}\right)-a_{1} \otimes \psi\left(a_{2}\right)
-a\li\ot P(a\lii)\\
&&-a_{(-1)} \otimes \Delta_{A}\left(a_{(0)}\right)-a_{(-1)} \otimes \phi\left(a_{(0)}\right)-a_{(-1)} \otimes \psi\left(a_{(0)}\right)
-a\loi\ot P(a\loo)\\
&&-a_{(0)} \otimes \Delta_{H}\left(a_{(1)}\right)-a_{(0)} \otimes \rho\left(a_{(1)}\right)-a_{(0)} \otimes \gamma\left(a_{(1)}\right)
-a\loo\ot Q(a\lmi)\\
&&-a\ppi\ot \Delta_{H}(a\pii)-a\ppi\ot \rho(a\pii)-a\ppi\ot\gamma(a\pii)-a\ppi\ot Q(a\pii)\\
&&-x_{1} \otimes \Delta_{H}\left(x_{2}\right)-x_{1} \otimes \rho\left(x_{2}\right)-x_{1} \otimes \gamma\left(x_{2}\right)
-x\li\ot Q(x\lii)\\
&&-x_{[-1]} \otimes \Delta_{H}\left(x_{[0]}\right)-x_{[-1]} \otimes \rho\left(x_{[0]}\right)-x_{[-1]} \otimes \gamma\left(x_{[0]}\right)
-x\boi\ot Q(x\boo)\\
&&-x_{[0]} \otimes \Delta_{A}\left(x_{[1]}\right)-x_{[0]} \otimes \phi\left(x_{[1]}\right)-x_{[0]} \otimes \psi\left(x_{[1]}\right)
-x\boo\ot P(x\bi)\\
&&-x\qi\ot\Delta_{A}(x\qii)-x\qi\ot\phi(x\qii)-x\qi\ot\psi(x\qii)-x\qi\ot P(x\qii),
\end{eqnarray*}
and the right hand side is equal to
\begin{eqnarray*}
&& -\tau_{12}\big((\Delta_E\ot\id)\Delta_E(a+x)-(\id\ot\Delta_E)\Delta_E(a+x)\big)\\
&=&-\tau_{12}(\Delta_E\ot\id)(a_{1} \otimes a_{2}+a_{(-1)} \otimes a_{(0)}+a_{(0)} \otimes a_{(1)}
+a\ppi\ot a\pii+x_{1} \otimes x_{2}\\
&&+x_{[-1]} \otimes x_{[0]}+x_{[0]} \otimes x_{[1]}+x\qi\ot x\qii)
+\tau_{12}(\id\ot\Delta_E)(a_{1} \otimes a_{2}+a_{(-1)} \otimes a_{(0)}\\
&&+a_{(0)} \otimes a_{(1)}+a\ppi\ot a\pii+x_{1} \otimes x_{2}+x_{[-1]} \otimes x_{[0]}+x_{[0]} \otimes x_{[1]}+x\qi\ot x\qii)\\
&=&-\tau_{12}\big(\Delta_{A}\left(a_{1}\right) \otimes a_{2}\big)-\tau_{12}\big(\phi\left(a_{1}\right) \otimes a_{2}\big)-\tau_{12}\big(\psi\left(a_{1}\right) \otimes a_{2}\big)-\tau_{12}\big(P(a\li)\ot a\lii\big)\\
&&-\tau_{12}\big(\Delta_{H}\left(a_{(-1)}\right) \otimes a_{(0)}\big)-\tau_{12}\big(\rho\left(a_{(-1)}\right) \otimes a_{(0)}\big)
-\tau_{12}\big(\gamma\left(a_{(-1)}\right) \otimes a_{(0)}\big)\\
&&-\tau_{12}\big(Q(a\loi)\ot a\loo\big)-\tau_{12}\big(\Delta_{A}\left(a_{(0)}\right) \otimes a_{(1)}\big)-\tau_{12}\big(\phi\left(a_{(0)}\right) \otimes a_{(1)}\big)\\
&&-\tau_{12}\big(\psi\left(a_{(0)}\right) \otimes a_{(1)}\big)-\tau_{12}\big(P(a\loo)\ot a\lmi\big)-\tau_{12}\big(\Delta_{H}(a\ppi)\ot a\pii\big)\\
&&-\tau_{12}\big(\rho(a\ppi)\ot a\pii\big)-\tau_{12}\big(\gamma(a\ppi)\ot a\pii\big)-\tau_{12}\big(Q(a\ppi)\ot a\pii\big)\\
&&-\tau_{12}\big(\Delta_{H}\left(x_{1}\right) \otimes x_{2}\big)-\tau_{12}\big(\rho\left(x_{1}\right) \otimes x_{2}\big)
-\tau_{12}\big(\gamma\left(x_{1}\right) \otimes x_{2}\big)-\tau_{12}\big(Q(x\li)\ot x\lii\big)\\
&&-\tau_{12}\big(\Delta_{A}\left(x_{[-1])}\right) \otimes x_{[0]}\big)-\tau_{12}\big(\phi\left(x_{[-1]}\right) \otimes x_{[0]}\big)
-\tau_{12}\big(\psi\left(x_{[-1]}\right) \otimes x_{[0]}\big)\\
&&-\tau_{12}\big(P(x\boi)\ot x\boo\big)-\tau_{12}\big(\Delta_{H}\left(x_{[0]}\right) \otimes x_{[1]}\big)-\tau_{12}\big(\rho\left(x_{[0]}\right) \otimes x_{[1]}\big)\\
&&-\tau_{12}\big(\gamma\left(x_{[0]}\right) \otimes x_{[1]}\big)-\tau_{12}\big(Q(x\boo)\ot x\bi\big)-\tau_{12}\big(\Delta_{A}(x\qi)\ot x\qii\big)\\
&&-\tau_{12}\big(\phi(x\qi)\ot x\qii\big)-\tau_{12}\big(\psi(x\qi)\ot x\qii\big)-\tau_{12}\big(P(x\qi)\ot x\qii\big)\\
&&+\tau_{12}\big(a_{1} \otimes \Delta_{A}\left(a_{2}\right)\big)+\tau_{12}\big(a_{1} \otimes \phi\left(a_{2}\right)\big)
+\tau_{12}\big(a_{1} \otimes \psi\left(a_{2}\right)\big)+\tau_{12}\big(a\li\ot P(a\lii)\big)\\
&&+\tau_{12}\big(a_{(-1)} \otimes \Delta_{A}\left(a_{(0)}\right)\big)+\tau_{12}\big(a_{(-1)} \otimes \phi\left(a_{(0)}\right)\big)
+\tau_{12}\big(a_{(-1)} \otimes \psi\left(a_{(0)}\right)\big)\\
&&+\tau_{12}\big(a\loi\ot P(a\loo)\big)+\tau_{12}\big(a_{(0)} \otimes \Delta_{H}\left(a_{(1)}\right)\big)+\tau_{12}\big(a_{(0)} \otimes \rho\left(a_{(1)}\right)\big)\\
&&+\tau_{12}\big(a_{(0)} \otimes \gamma\left(a_{(1)}\right)\big)+\tau_{12}\big(a\loo\ot Q(a\lmi)\big)+\tau_{12}\big(a\ppi\ot \Delta_{H}(a\pii)\big)\\
&&+\tau_{12}\big(a\ppi\ot \rho(a\pii)\big)+\tau_{12}\big(a\ppi\ot\gamma(a\pii)\big)+\tau_{12}\big(a\ppi\ot Q(a\pii)\big)\\
&&+\tau_{12}\big(x_{1} \otimes \Delta_{H}\left(x_{2}\right)\big)+\tau_{12}\big(x_{1} \otimes \rho\left(x_{2}\right)\big)
+\tau_{12}\big(x_{1} \otimes \gamma\left(x_{2}\right)\big)+\tau_{12}\big(x\li\ot Q(x\lii)\big)\\
&&+\tau_{12}\big(x_{[-1]} \otimes \Delta_{H}\left(x_{[0]}\right)\big)+\tau_{12}\big(x_{[-1]} \otimes \rho\left(x_{[0]}\right)\big)
+\tau_{12}\big(x_{[-1]} \otimes \gamma\left(x_{[0]}\right)\big)\\
&&+\tau_{12}\big(x\boi\ot Q(x\boo)\big)+\tau_{12}\big(x_{[0]} \otimes \Delta_{A}\left(x_{[1]}\right)\big)+\tau_{12}\big(x_{[0]} \otimes \phi\left(x_{[1]}\right)\big)\\
&&+\tau_{12}\big(x_{[0]} \otimes \psi\left(x_{[1]}\right)\big)+\tau_{12}\big(x\boo\ot P(x\bi)\big)+\tau_{12}\big(x\qi\ot\Delta_{A}(x\qii)\big)\\
&&+\tau_{12}\big(x\qi\ot\phi(x\qii)\big)
+\tau_{12}\big(x\qi\ot\psi(x\qii)\big)+\tau_{12}\big(x\qi\ot P(x\qii)\big).
\end{eqnarray*}
Thus the two sides are equal to each other if and only if (CCP1)--(CCP8) hold.

Next, we need to check
  $$
 \begin{aligned}
 &(\Delta_E\ot\id)\Delta_E(a+x)-(\id\ot\Delta_E)\Delta_E(a+x)\\
 =&-\tau_{23}\big((\Delta_E\ot\id)\Delta_E(a+x)-(\id\ot\Delta_E)\Delta_E(a+x)\big).
 \end{aligned}
 $$
By direct computations, the left hand side is equal to
\begin{eqnarray*}
&&(\Delta_E\ot\id)\Delta_E(a+x)-(\id\ot\Delta_E)\Delta_E(a+x)\\
&=&(\Delta_E\ot\id)(a_{1} \otimes a_{2}+a_{(-1)} \otimes a_{(0)}+a_{(0)} \otimes a_{(1)}+a\ppi\ot a\pii+x_{1} \otimes x_{2}\\
&&+x_{[-1]} \otimes x_{[0]}+x_{[0]} \otimes x_{[1]}+x\qi\ot x\qii)-(\id\ot\Delta_E)(a_{1} \otimes a_{2}+a_{(-1)} \otimes a_{(0)}\\
&&+a_{(0)} \otimes a_{(1)}+a\ppi\ot a\pii+x_{1} \otimes x_{2}+x_{[-1]} \otimes x_{[0]}+x_{[0]} \otimes x_{[1]}+x\qi\ot x\qii)\\
&=&\Delta_{A}\left(a_{1}\right) \otimes a_{2}+\phi\left(a_{1}\right) \otimes a_{2}+\psi\left(a_{1}\right) \otimes a_{2}
+P(a\li)\ot a\lii\\
&&+\Delta_{H}\left(a_{(-1)}\right) \otimes a_{(0)}+\rho\left(a_{(-1)}\right) \otimes a_{(0)}+\gamma\left(a_{(-1)}\right) \otimes a_{(0)}
+Q(a\loi)\ot a\loo\\
&&+\Delta_{A}\left(a_{(0)}\right) \otimes a_{(1)}+\phi\left(a_{(0)}\right) \otimes a_{(1)}+\psi\left(a_{(0)}\right) \otimes a_{(1)}
+P(a\loo)\ot a\lmi\\
&&+\Delta_{H}(a\ppi)\ot a\pii+\rho(a\ppi)\ot a\pii+\gamma(a\ppi)\ot a\pii+Q(a\ppi)\ot a\pii\\
&&+\Delta_{H}\left(x_{1}\right) \otimes x_{2}+\rho\left(x_{1}\right) \otimes x_{2}+\gamma\left(x_{1}\right) \otimes x_{2}
+Q(x\li)\ot x\lii\\
&&+\Delta_{A}\left(x_{[-1])}\right) \otimes x_{[0]}+\phi\left(x_{[-1]}\right) \otimes x_{[0]}+\psi\left(x_{[-1]}\right) \otimes x_{[0]}
+P(x\boi)\ot x\boo\\
&&+\Delta_{H}\left(x_{[0]}\right) \otimes x_{[1]}+\rho\left(x_{[0]}\right) \otimes x_{[1]}+\gamma\left(x_{[0]}\right) \otimes x_{[1]}
+Q(x\boo)\ot x\bi\\
&&+\Delta_{A}(x\qi)\ot x\qii+\phi(x\qi)\ot x\qii+\psi(x\qi)\ot x\qii+P(x\qi)\ot x\qii\\
&&-a_{1} \otimes \Delta_{A}\left(a_{2}\right)-a_{1} \otimes \phi\left(a_{2}\right)-a_{1} \otimes \psi\left(a_{2}\right)
-a\li\ot P(a\lii)\\
&&-a_{(-1)} \otimes \Delta_{A}\left(a_{(0)}\right)-a_{(-1)} \otimes \phi\left(a_{(0)}\right)-a_{(-1)} \otimes \psi\left(a_{(0)}\right)
-a\loi\ot P(a\loo)\\
&&-a_{(0)} \otimes \Delta_{H}\left(a_{(1)}\right)-a_{(0)} \otimes \rho\left(a_{(1)}\right)-a_{(0)} \otimes \gamma\left(a_{(1)}\right)
-a\loo\ot Q(a\lmi)\\
&&-a\ppi\ot \Delta_{H}(a\pii)-a\ppi\ot \rho(a\pii)-a\ppi\ot\gamma(a\pii)-a\ppi\ot Q(a\pii)\\
&&-x_{1} \otimes \Delta_{H}\left(x_{2}\right)-x_{1} \otimes \rho\left(x_{2}\right)-x_{1} \otimes \gamma\left(x_{2}\right)
-x\li\ot Q(x\lii)\\
&&-x_{[-1]} \otimes \Delta_{H}\left(x_{[0]}\right)-x_{[-1]} \otimes \rho\left(x_{[0]}\right)-x_{[-1]} \otimes \gamma\left(x_{[0]}\right)
-x\boi\ot Q(x\boo)\\
&&-x_{[0]} \otimes \Delta_{A}\left(x_{[1]}\right)-x_{[0]} \otimes \phi\left(x_{[1]}\right)-x_{[0]} \otimes \psi\left(x_{[1]}\right)
-x\boo\ot P(x\bi)\\
&&-x\qi\ot\Delta_{A}(x\qii)-x\qi\ot\phi(x\qii)-x\qi\ot\psi(x\qii)-x\qi\ot P(x\qii),
\end{eqnarray*}
and the right hand side is equal to
\begin{eqnarray*}
&& -\tau_{23}\big((\Delta_E\ot\id)\Delta_E(a+x)-(\id\ot\Delta_E)\Delta_E(a+x)\big)\\
&=&-\tau_{23}(\Delta_E\ot\id)(a_{1} \otimes a_{2}+a_{(-1)} \otimes a_{(0)}+a_{(0)} \otimes a_{(1)}
+a\ppi\ot a\pii+x_{1} \otimes x_{2}\\
&&+x_{[-1]} \otimes x_{[0]}+x_{[0]} \otimes x_{[1]}+x\qi\ot x\qii)
+\tau_{23}(\id\ot\Delta_E)(a_{1} \otimes a_{2}+a_{(-1)} \otimes a_{(0)}\\
&&+a_{(0)} \otimes a_{(1)}+a\ppi\ot a\pii+x_{1} \otimes x_{2}+x_{[-1]} \otimes x_{[0]}+x_{[0]} \otimes x_{[1]}+x\qi\ot x\qii)\\
&=&-\tau_{23}\big(\Delta_{A}\left(a_{1}\right) \otimes a_{2}\big)-\tau_{23}\big(\phi\left(a_{1}\right) \otimes a_{2}\big)
-\tau_{23}\big(\psi\left(a_{1}\right) \otimes a_{2}\big)-\tau_{23}\big(P(a\li)\ot a\lii\big)\\
&&-\tau_{23}\big(\Delta_{H}\left(a_{(-1)}\right) \otimes a_{(0)}\big)-\tau_{23}\big(\rho\left(a_{(-1)}\right) \otimes a_{(0)}\big)
-\tau_{23}\big(\gamma\left(a_{(-1)}\right) \otimes a_{(0)}\big)\\
&&-\tau_{23}\big(Q(a\loi)\ot a\loo\big)-\tau_{23}\big(\Delta_{A}\left(a_{(0)}\right) \otimes a_{(1)}\big)-\tau_{23}\big(\phi\left(a_{(0)}\right) \otimes a_{(1)}\big)\\
&&-\tau_{23}\big(\psi\left(a_{(0)}\right) \otimes a_{(1)}\big)-\tau_{23}\big(P(a\loo)\ot a\lmi\big)-\tau_{23}\big(\Delta_{H}(a\ppi)\ot a\pii\big)\\
&&-\tau_{23}\big(\rho(a\ppi)\ot a\pii\big)-\tau_{23}\big(\gamma(a\ppi)\ot a\pii\big)-\tau_{23}\big(Q(a\ppi)\ot a\pii\big)\\
&&-\tau_{23}\big(\Delta_{H}\left(x_{1}\right) \otimes x_{2}\big)-\tau_{23}\big(\rho\left(x_{1}\right) \otimes x_{2}\big)
-\tau_{23}\big(\gamma\left(x_{1}\right) \otimes x_{2}\big)-\tau_{23}\big(Q(x\li)\ot x\lii\big)\\
&&-\tau_{23}\big(\Delta_{A}\left(x_{[-1])}\right) \otimes x_{[0]}\big)-\tau_{23}\big(\phi\left(x_{[-1]}\right) \otimes x_{[0]}\big)
-\tau_{23}\big(\psi\left(x_{[-1]}\right) \otimes x_{[0]}\big)\\
&&-\tau_{23}\big(P(x\boi)\ot x\boo\big)-\tau_{23}\big(\Delta_{H}\left(x_{[0]}\right) \otimes x_{[1]}\big)-\tau_{23}\big(\rho\left(x_{[0]}\right) \otimes x_{[1]}\big)\\
&&-\tau_{23}\big(\gamma\left(x_{[0]}\right) \otimes x_{[1]}\big)-\tau_{23}\big(Q(x\boo)\ot x\bi\big)-\tau_{23}\big(\Delta_{A}(x\qi)\ot x\qii\big)\\
&&-\tau_{23}\big(\phi(x\qi)\ot x\qii\big)
-\tau_{23}\big(\psi(x\qi)\ot x\qii\big)-\tau_{23}\big(P(x\qi)\ot x\qii\big)\\
&&+\tau_{23}\big(a_{1} \otimes \Delta_{A}\left(a_{2}\right)\big)+\tau_{23}\big(a_{1} \otimes \phi\left(a_{2}\right)\big)
+\tau_{23}\big(a_{1} \otimes \psi\left(a_{2}\right)\big)+\tau_{23}\big(a\li\ot P(a\lii)\big)\\
&&+\tau_{23}\big(a_{(-1)} \otimes \Delta_{A}\left(a_{(0)}\right)\big)+\tau_{23}\big(a_{(-1)} \otimes \phi\left(a_{(0)}\right)\big)
+\tau_{23}\big(a_{(-1)} \otimes \psi\left(a_{(0)}\right)\big)\\
&&+\tau_{23}\big(a\loi\ot P(a\loo)\big)+\tau_{23}\big(a_{(0)} \otimes \Delta_{H}\left(a_{(1)}\right)\big)+\tau_{23}\big(a_{(0)} \otimes \rho\left(a_{(1)}\right)\big)\\
&&+\tau_{23}\big(a_{(0)} \otimes \gamma\left(a_{(1)}\right)\big)+\tau_{23}\big(a\loo\ot Q(a\lmi)\big)+\tau_{23}\big(a\ppi\ot \Delta_{H}(a\pii)\big)\\
&&+\tau_{23}\big(a\ppi\ot \rho(a\pii)\big)
+\tau_{23}\big(a\ppi\ot\gamma(a\pii)\big)+\tau_{23}\big(a\ppi\ot Q(a\pii)\big)\\
&&+\tau_{23}\big(x_{1} \otimes \Delta_{H}\left(x_{2}\right)\big)+\tau_{23}\big(x_{1} \otimes \rho\left(x_{2}\right)\big)
+\tau_{23}\big(x_{1} \otimes \gamma\left(x_{2}\right)\big)+\tau_{23}\big(x\li\ot Q(x\lii)\big)\\
&&+\tau_{23}\big(x_{[-1]} \otimes \Delta_{H}\left(x_{[0]}\right)\big)+\tau_{23}\big(x_{[-1]} \otimes \rho\left(x_{[0]}\right)\big)
+\tau_{23}\big(x_{[-1]} \otimes \gamma\left(x_{[0]}\right)\big)\\
&&+\tau_{23}\big(x\boi\ot Q(x\boo)\big)+\tau_{23}\big(x_{[0]} \otimes \Delta_{A}\left(x_{[1]}\right)\big)+\tau_{23}\big(x_{[0]} \otimes \phi\left(x_{[1]}\right)\big)\\
&&+\tau_{23}\big(x_{[0]} \otimes \psi\left(x_{[1]}\right)\big)+\tau_{23}\big(x\boo\ot P(x\bi)\big)+\tau_{23}\big(x\qi\ot\Delta_{A}(x\qii)\big)\\
&&+\tau_{23}\big(x\qi\ot\phi(x\qii)\big)
+\tau_{23}\big(x\qi\ot\psi(x\qii)\big)+\tau_{23}\big(x\qi\ot P(x\qii)\big).
\end{eqnarray*}
Thus the two sides are equal to each other if and only if (CCP9)--(CCP16) hold.
\end{proof}

\begin{definition}\label{cocycledmp}
Let $A$ and $H$ be both  alternative algebras and  alternative  coalgebras. If  the following conditions hold:
\begin{enumerate}
\item[(CDM1)]  $\phi(ab)+\gamma(\theta(a, b))\\
 =-a\lmi\ot b a\loo+b\loi\ot a b\loo+b\loi\ot b\loo a+(a\loi\trl b)\ot a\loo\\
+(a\lmi\trl b)\ot a\loo-(a\trr b\loi)\ot b\loo+b\ppi\ot(a\ppl b\pii)+b\ppi\ot(b\pii\ppr a)\\
-a\pii\ot(b\ppl a\ppi)+\theta(a\li, b)\ot a\lii+\theta(a\lii, b)\ot a\li-\theta(a, b\li)\ot b\lii$,
\item[(CDM2)] $\psi(a b)+\rho(\theta(a, b))\\
=a\loo b\ot a\lmi+a\loo b\ot a\loi-a b\loo\ot b\lmi-a\loo\ot (b\trr a\loi)+b\loo\ot(a\trr b\lmi)\\
+b\loo\ot(b\lmi\trl a)+(a\ppi\ppr b)\ot a\pii+(a\pii\ppr b)\ot a\ppi\\
-(a\ppl b\ppi)\ot b\pii+b\li\ot\theta(a, b\lii)+b\li\ot\theta(b\lii, a)-a\lii\ot\theta(b, a\li)$,
\item[(CDM3)] $\rho(x y)+\psi(\sigma(x, y))\\
=(x\bi\ppl y)\ot x\boo+(x\boi\ppl y)\ot x\boo-x_{[1]} \otimes y x_{[0]}-\left(x \ppr y_{[-1]}\right) \otimes y_{[0]}  \\
+y\boi\ot x y\boo+y\boi\ot y\boo x+\sigma(x\li, y)\ot x\lii+\sigma(x\lii, y)\ot x\li-\sigma(x, y\li)\ot y\lii\\
+y\qi\ot (y\qii\trr x)+y\qi\ot (x\trl y\qii)-x\qii\ot(y\trl x\qi)$,
\item[(CDM4)] $\gamma(x y)+\phi(\sigma(x, y))\\
=x\boo y\ot x\bi+x\boo y\ot x\boi-x_{[0]}\otimes (y\ppr x\boi)-xy_{[0]}\otimes y_{[1]}+y\boo\ot (x\ppr y\bi)\\
+y\boo\ot(y\bi\ppl x)+(x\qi\trr y)\ot x\qii+(x\qii\trr y)\ot x\qi-x\lii\ot\sigma(y, x\li)\\
+y\li\ot\sigma(x, y\lii)+y\li\ot\sigma(y\lii, x)-(x\trl y\qi)\ot y\qii$,
\item[(CDM5)] $\Delta_{A}(x \ppr b)+Q(x\trl  b)$ \\
$=(x\boo\ppr b)\ot x\bi+(x\boo\ppr b)\ot x\boi-x\bi\ot(b\ppl x\boo)-\left(x \ppr b_{1}\right) \otimes b_{2}\\
+b\li\ot(x\ppr b\lii)+b\li\ot(b\lii\ppl x)+b\loo\ot\sigma(x, b\lmi)+b\loo\ot\sigma(b\lmi, x)\\
-\sigma(x, b\loi)\ot b\loo+x\qi b\ot x\qii+x\qii b\ot x\qi-x\qii\ot b x\qi$,

\item[(CDM6)] $\Delta_{A}(a\ppl y)+Q(a\trr y)$\\
$=(a\li\ppl y)\ot a\lii+\left(a_{2} \ppl y\right)\ot a\li-a\lii\ot(y\ppr a\li)-\left(a\ppl y_{[0]}\right) \otimes y_{[1]}\\
+y\boi\ot(a\ppl y\boo)+y\boi\ot(y\boo\ppr a)+\sigma(a\loi, y)\ot a\loo+\sigma(a\lmi, y)\ot a\loo\\
-a\loo\ot\sigma(y, a\loi)+y\qi\ot a y\qii+y\qi\ot y\qii a-a y\qi\ot y\qii$,

\item[(CDM7)] $\Delta_{H}(a \trr y)+P(a\ppl y)$\\
$=(a\loo\trr y)\ot a\lmi+(a\loo\trr y)\ot a\loi-a\lmi\ot(y\trl a\loo)+y\li\ot(a\trr y\lii)\\
+y\li\ot(y\lii\trl a)-\left(a \trr y_{1}\right) \otimes y_{2}+y\boo\ot\theta(a, y\bi)+y\boo\ot\theta(y\bi, a)\\
-\theta(a, y\boi)\ot y\boo+a\ppi y\ot a\pii+a\pii y\ot a\ppi-a\pii\ot y a\ppi$,

\item[(CDM8)] $\Delta_{H}(x \trl b)+P(x\ppr b)$\\
$=(x\li\trl b)\ot x\lii+(x\lii\trl b)\ot x\li-x\lii\ot(b\trr x\li)+b\loi\ot(x\trl b\loo)\\
+b\loi\ot(b\loo\trr x)-\left(x\trl b_{(0)}\right) \otimes b_{(1)}+\theta(x\boi, b)\ot x\boo+\theta(x\bi, b)\ot x\boo\\
-x\boo\ot\theta(b, x\boi)+b\ppi\ot x b\pii+b\ppi\ot b\pii x-x b\ppi\ot b\pii$,

\item[(CDM9)]$\Delta_{H}(\theta(a, b))+P(a, b)$\\
$=\theta(a\loo, b)\ot a\lmi+\theta(a\loo, b)\ot a\loi-\theta(a, b\loo)\ot b\lmi+b\loi\ot\theta(a, b\loo)\\
+b\loi\ot\theta(b\loo, a)-a\lmi\ot\theta(b, a\loo)+(a\ppi\trl b)\ot a\pii+(a\pii\trl b)\ot a\ppi\\
-(a\trr b\ppi)\ot b\pii+b\ppi\ot(a\trr b\pii)+b\ppi\ot(b\pii\trl a)-a\pii\ot(b\trr a\ppi)$,

\item[(CDM10)]$\Delta_{A}(\sigma(x, y))+Q(x, y)$\\
$=\sigma(x\boo, y)\ot x\bi+\sigma(x\boo, y)\ot x\boi-\sigma(x, y\boo)\ot y\bi+y\boi\ot\sigma(x, y\boo)\\
+y\boi\ot\sigma(y\boo, x)-x\bi\ot\sigma(y, x\boo)+(x\qi\ppl y)\ot x\qii+(x\qii\ppl y)\ot x\qi\\
-x\qii\ot(y\ppr x\qi)+y\qi\ot(x\ppr y\qii)+y\qi\ot(y\qii\ppl x)-(x\ppr y\qi)\ot y\qii$,

\item[(CDM11)]
 $\phi(x \ppr b)+\gamma(x\trl b)\\
 =(x\boo\trl b)\ot x\bi+(x\boo\trl b)\ot x\boi-x\lii\ot(b\ppl x\li)-x\boo\ot b x\boi$\\
$+b\loi\ot(x\ppr b\loo)-\left(x\trl b_{1}\right) \otimes b_{2}+b\loi\ot(b\loo\ppl x)-x b_{(-1)} \otimes b_{(0)}$\\
$+\theta(x\qi, b)\ot x\qii+\theta(x\qii, b)\ot x\qi+b\ppi\ot\sigma(x, b\pii)+b\ppi\ot\sigma(b\pii, x)$,

\item[(CDM12)]
$\psi(a\ppl y)+\rho(a \trr y)\\
=(a\loo\ppl y)\ot a\lmi+(a\loo\ppl y)\ot a\loi-a\lii\ot(y\trl a\li)-a\loo\ot y a\loi$\\
$+y\boi\ot(a\trr y\boo)-\left(a\ppl y_{1}\right) \otimes y_{2}+y\boi\ot(y\boo\trl a)-a y_{[-1]} \otimes y_{[0]}$\\
$+\sigma(a\ppi, y)\ot a\pii+\sigma(a\pii, y)\ot a\ppi+y\qi\ot\theta(a, y\qii)+y\qi\ot\theta(y\qii, a)$,

\item[(CDM13)]
$\psi(x \ppr b)+\rho(x\trl b)\\
=(x\li\ppr b)\ot x\lii+x\boi b\ot x\boo+(x\lii\ppr b)\ot x\li+x\bi b\ot x\boo\\
-x\bi\ot(b\trr x\boo)+b\li\ot(x\trl b\lii)+b\loo\ot x b\lmi+b\li\ot(b\lii\trr x)\\
+b\loo\ot b\lmi x-(x \ppr b_{(0)}) \otimes b_{(1)}-x\qii\ot\theta(b, x\qi)-\sigma(x, b\ppi)\ot b\pii$,

\item[(CDM14)]
$\phi(a\ppl y)+\gamma(a \trr y)\\
=(a\li\trr y)\ot a\lii+a\loi y\ot a\loo+(a\lii\trr y)\ot a\li+a\lmi y\ot a\loo\\
-a\lmi\ot (y\ppr a\loo)+y\li\ot(a\ppl y\lii)+y\boo\ot a y\bi+y\li\ot(y\lii\ppr a)\\
+y\boo\ot y\bi a-\left(a\trr y_{[0]}\right)\otimes y_{[1]}-\theta(a, y\qi)\ot y\qii-a\pii\ot\sigma(y, a\ppi)$,

\item[(CDM15)]  $\phi(b a)+\gamma(\theta(b, a))+\tau\psi(b a)+\tau\rho(\theta(b, a))\\
=a\loi\ot b a\loo+(b\trr a\lmi)\ot a\loo+(b\loi\trl a)\ot b\loo+b\lmi\ot b\loo a\\
+a\ppi\ot(b\ppl a\pii)+b\pii\ot(b\ppi\ppr a)+\theta(b, a\lii)\ot a\li+\theta(b\li, a)\ot b\lii$,

\item[(CDM16)] $\psi(b a)+\rho(\theta(b,a))+\tau\phi(b a)+\tau\gamma(\theta(b, a))\\
=a\loo\ot(b\trr a\lmi)+b a\loo \ot a\loi+ b\loo a\ot b\lmi+b\loo\ot(b\loi\trl a)\\
+a\li\ot\theta(b, a\lii)+b\lii\ot\theta(b\li, a)+(b\ppl a\pii)\ot a\ppi+(b\ppi\ppr a)\ot b\pii$,

\item[(CDM17)] $\rho(y x)+\psi(\sigma(y, x))+\tau\gamma(y x)+\tau\phi(\sigma(y, x))\\
=x\boi\ot y x\boo+(y\ppr x\bi)\ot x\boo+(y\boi\ppl x)\ot y\boo+y\bi\ot y\boo x\\
+x\qi\ot(y\trl x\qii)+y\qii\ot(y\qi\trr x)+\sigma(y, x\lii)\ot x\li+\sigma(y\li, x)\ot y\lii$,

\item[(CDM18)] $\gamma(y x)+\phi(\sigma(y, x))+\tau\rho(y x)+\tau\psi(\sigma(y, x))\\
=y x\boo\ot x\boi+y\boo x\ot y\bi+y\boo\ot(y\boi\ppl x)+x\boo\ot(y\ppr x\bi)\\
+x\li\ot\sigma(y, x\lii)+y\lii\ot\sigma(y\li, x)+(y\trl x\qii)\ot x\qi+(y\qi\trr x)\ot y\qii$,

\item[(CDM19)] $\Delta_{A}(b \ppl x)+Q(b\trr x)+\tau\Delta_{A}(b \ppl x)+\tau Q(b\trr x)\\
=x\boi\ot(b\ppl x\boo)+(b\ppl x\boo)\ot x\boi+(b\li\ppl x)\ot b\lii+b\lii\ot(b\li\ppl x)\\
+x\qi\ot b x\qii+b x\qii\ot x\qi+\sigma(b\loi, x)\ot b\loo+b\loo\ot\sigma(b\loi, x)$,

\item[(CDM20)] $\Delta_{A}(y\ppr a)+Q(y\trl a)+\tau\Delta_{A}(y\ppr a)+\tau Q(y\trl a)\\
=a\li\ot(y\ppr a\lii)+(y\ppr a\lii)\ot a\li+(y\boo\ppr a)\ot y\bi+y\bi\ot(y\boo\ppr a)\\
+a\loo\ot\sigma(y, a\lmi)+\sigma(y, a\lmi)\ot a\loo+y\qi a\ot y\qii+y\qii\ot y\qi a$,

\item[(CDM21)] $\Delta_{H}(y\trl a)+P(y\ppr a)+\tau\Delta_{H}(y\trl a)+\tau P(y\ppr a)\\
=a\loi\ot(y\trl a\loo)+(y\trl a\loo)\ot a\loi+(y\li\trl a)\ot y\lii+y\lii\ot(y\li\trl a)\\
+a\ppi\ot y a\pii+y a\pii\ot a\ppi+\theta(y\boi, a)\ot y\boo+y\boo\ot\theta(y\boi, a)$,

\item[(CDM22)] $\Delta_{H}(b \trr x)+P(b\ppl x)+\tau\Delta_{H}(b \trr x)+\tau P(b\ppl x)\\
=x\li\ot(b\trr x\lii)+(b\trr x\lii)\ot x\li+(b\loo\trr x)\ot b\lmi+b\lmi\ot (b\loo\trr x)\\
+x\boo\ot\theta(b, x\bi)+\theta(b, x\bi)\ot x\boo+b\ppi x\ot b\pii+b\pii\ot b\ppi x$,

\item[(CDM23)]
$\phi(y \ppr a)+\tau\psi(y \ppr a)+\gamma(y\trl a)+\tau\rho(y\trl a)\\
=a\loi\ot(y\ppr a\loo)+(y\trl a\lii)\ot a\li+y a\lmi\ot a\loo+(y\boo\trl a)\ot y\bi\\
+y\lii\ot(y\li\ppr a)+y\boo\ot y\boi a+a\ppi\ot\sigma(y, a\pii)+\theta(y\qi, a)\ot y\qii$,

\item[(CDM24)]
$\phi(b \ppl x)+\tau\psi(b \ppl x)+\gamma(b\trr x)+\tau\rho(b\trr x)\\
=x\li\ot(b\ppl x\lii)+x\boo\ot b x\bi+(b\trr x\boo)\ot x\boi+(b\li\trr x)\ot b\lii\\
+b\loi x\ot b\loo+b\lmi\ot(b\loo\ppl x)+\theta(b, x\qii)\ot x\qi+b\pii\ot\sigma(b\ppi, x)$,

\item[(CDM25)]
$\psi(y \ppr a)+\tau\phi(y \ppr a)+\rho(y\trl a)+\tau\gamma(y\trl a)\\
=a\li\ot(y\trl a\lii)+a\loo\ot y a\lmi+(y\ppr a\loo)\ot a\loi+(y\li\ppr a)\ot y\lii\\
+y\boi a\ot y\boo+y\bi\ot(y\boo\trl a)+\sigma(y, a\pii)\ot a\ppi+y\qii\ot\theta(y\qi, a)$,

\item[(CDM26)]
$\psi(b \ppl x)+\tau\phi(b \ppl x)+\rho(b\trr x)+\tau\gamma(b\trr x)\\
=x\boi\ot(b\trr x\boo)+(b\ppl x\lii)\ot x\li+b x\bi\ot x\boo+(b\loo\ppl x)\ot b\lmi\\
+b\lii\ot(b\li\trr x)+b\loo\ot b\loi x+x\qi\ot\theta(b, x\qii)+\sigma(b\ppi, x)\ot b\pii$,

\item[(CDM27)]
$\Delta_{A}(\sigma(y, x))+Q(y, x)+\tau\Delta_{A}(\sigma(y, x))+\tau Q(y,x)\\
=x\boi\ot\sigma(y, x\boo)+\sigma(y, x\boo)\ot x\boi+x\qi\ot(y\ppr x\qii)+(y\ppr x\qii)\ot x\qi\\
+\sigma(y\boo, x)\ot y\bi+y\bi\ot\sigma(y\boo, x)+(y\qi\ppl x)\ot y\qii+y\qii\ot(y\qi\ppl x)$,

\item[(CDM28)]
$P(b,a)+\Delta_{H}(\theta(b,a))+\tau P(b, a)+\tau\Delta_{H}(\theta(b, a))\\
=a\loi\ot\theta(b, a\loo)+\theta(b, a\loo)\ot a\loi+a\ppi\ot(b\trr a\pii)+(b\trr a\pii)\ot a\ppi\\
+\theta(b\loo, a)\ot b\lmi+b\lmi\ot\theta(b\loo, a)+(b\ppi\trl a)\ot b\pii+b\pii\ot(b\ppi\trl a)$,
\end{enumerate}
\noindent then $(A, H)$ is called a \emph{cocycle double matched pair}.
\end{definition}

\begin{definition}\label{cocycle-braided}
(i) A \emph{cocycle braided alternative bialgebra} $A$ is simultaneously a cocycle alternative algebra $(A, \theta)$ and  a cycle alternative coalgebra $(A, Q)$ satisfying the  conditions
\begin{enumerate}
\item[(CBB1)] $\Delta_{A}(ab)+Q(\theta(a,b))\\
=a_{1} b\otimes a_{2}+a\lii b\ot a\li-a\lii\ot b a\li+b\li\ot a b\lii\\
+b\li\ot b\lii a-a b\li\ot b\lii+(a\loi\ppr b)\ot a\loo+(a\lmi\ppr b)\ot a\loo\\
-a\loo\ot(b\ppl a\loi)+b\loo\ot(a\ppl b\lmi)+b\loo\ot(b\lmi\ppr a)-(a\ppl b\loi)\ot b\loo,$
    \end{enumerate}
\begin{enumerate}
\item[(CBB2)] $\Delta_{A}(ba)+Q(\theta(b,a))+\tau\Delta_{A}(ba)+\tau Q(\theta(b,a))\\
=a_{1} \otimes b a_{2}+b a\lii \ot a\li+b\li a\ot b\lii+b\lii\ot b\li a\\
+a\loo\ot(b\ppl a\lmi)+(b\ppl a\lmi)\ot a\loo+(b\loi\ppr a)\ot b\loo+b\loo\ot(b\loi\ppr a).$
    \end{enumerate}
(ii) A \emph{cocycle braided alternative bialgebra} $H$ is simultaneously a cocycle alternative algebra $(H, \sigma)$ and a cycle alternative coalgebra $(H, P)$ satisfying the conditions
\begin{enumerate}
\item[(CBB3)] $\Delta_{H}(xy)+P(\sigma(x, y))\\
=x\li y\ot x\lii+x\lii y\ot x\li-x\lii\ot y x\li+y\li\ot x y\lii\\
+y\li\ot y\lii x-x y\li\ot y\lii+(x\boi\trr y)\ot x\boo+(x\bi\trr y)\ot x\boo\\
-x\boo\ot(y\trl x\boi)+y\boo\ot(x\trl y\bi)+y\boo\ot(y\bi\trr x)-(x\trl y\boi)\ot y\boo,$
    \end{enumerate}
\begin{enumerate}
\item[(CBB4)] $\Delta_{H}(yx)+P(\sigma(y, x))+\tau\Delta_{H}(yx)+\tau P(\sigma(y, x))\\
=x\li \ot y x\lii+y x\lii \ot x\li+y\li x\ot y\lii+y\lii\ot y\li x\\
+x\boo\ot(y\trl x\bi)+(y\trl x\bi)\ot x\boo+(y\boi\trr x)\ot y\boo+y\boo\ot(y\boi\trr x).$
    \end{enumerate}
\end{definition}

The next theorem says that we can obtain an ordinary alternative bialgebra from two cocycle braided alternative bialgebras.

\begin{theorem}\label{main2}
Let $A$, $H$ be  cocycle braided alternative bialgebras, $(A, H)$ be a cocycle cross product system and a cycle cross coproduct system.
Then the cocycle cross product alternative  algebra and cycle cross coproduct alternative  coalgebra fit together to become an ordinary
alternative bialgebra if and only if $(A, H)$ forms  a cocycle double matched pair. We will call it the cocycle bicrossproduct alternative bialgebra and denote it by $A^{P}_{\sigma}\# {}^{Q}_{\theta}H$.
\end{theorem}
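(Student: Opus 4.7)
The plan is to imitate the strategy of Theorem \ref{thm-main0} and the (omitted) Theorem \ref{main1}, but now with the cocycles $\sigma,\theta$ and cycles $P,Q$ switched on. By the preceding lemmas, $E=A_{\sigma}\#_{\theta}H$ is already an alternative algebra (so (CP1)--(CP16) hold), and $E=A^{P}\#{}^{Q}H$ is already an alternative coalgebra (so (CCP1)--(CCP16) hold). What remains is to verify the two bialgebra compatibilities \eqref{eq:LB6} and \eqref{eq:LB7} (equivalently \eqref{eq:LB8}, \eqref{eq:LB9}) for the combined structure. Concretely, I would fix $a,b\in A$ and $x,y\in H$ and expand
\[
\Delta_{E}\bigl((a+x)(b+y)\bigr) \quad\text{and}\quad \Delta_{E}\bigl((b+y)(a+x)\bigr)+\tau\Delta_{E}\bigl((b+y)(a+x)\bigr)
\]
against the right-hand sides given by \eqref{eq:LB8}, \eqref{eq:LB9}, using \eqref{sp01} for the product on $E$ and the coproduct
\[
\Delta_{E}=\Delta_{A}+\phi+\psi+P \quad\text{on } A,\qquad \Delta_{E}=\Delta_{H}+\rho+\gamma+Q \quad\text{on } H.
\]

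Next, I would split the verification bilinearly into the four elementary cases obtained by choosing inputs from $A$ or $H$:
\[
(a,b),\quad (a,y),\quad (x,b),\quad (x,y),
\]
and then sort the resulting terms by their target component in $A\otimes A$, $A\otimes H$, $H\otimes A$, or $H\otimes H$. In the pure $(a,b)$ case the $A\otimes A$-part recovers exactly (CBB1) and (CBB2) for the braided bialgebra $A$, while the mixed components produce (CDM1), (CDM2), (CDM9), (CDM15), (CDM16) and (CDM28); symmetrically the pure $(x,y)$ case yields (CBB3) and (CBB4) for $H$ together with (CDM3), (CDM4), (CDM10), (CDM17), (CDM18) and (CDM27). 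The mixed cases $(a,y)$ and $(x,b)$ produce the remaining conditions (CDM5)--(CDM8), (CDM11)--(CDM14) and (CDM19)--(CDM26), in which the terms involving $\sigma,\theta,P,Q$ arise precisely because one must now feed a cocycle output of a product back through the coactions, and feed a cycle output of a coproduct back through the actions.

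Finally, each identity is reversible: every term appearing in the expansion is matched uniquely with one of the four components $A\otimes A, A\otimes H, H\otimes A, H\otimes H$, so the compatibility holds on $E$ if and only if every one of the CDM and CBB relations is satisfied. The sufficiency direction is then immediate by reading the computations backward, and the necessity follows by applying the compatibility to pure tensors $(a+0,b+0)$, $(a+0,0+y)$, etc., and projecting onto each of the four components. The only real obstacle is bookkeeping: with two cocycles, two cycles, four actions, four coactions, and the symmetrisation by $\tau$ on the second identity, the brute-force expansion produces several hundred terms. The cleanest way to manage this in the write-up is to proceed as in the proof of Theorem \ref{thm-main0}, giving the two full expansions and then listing the equalities componentwise; the resulting identities match (CBB1)--(CBB4) and (CDM1)--(CDM28) verbatim, completing the proof.
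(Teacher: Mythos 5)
Your proposal is correct and follows essentially the same route as the paper's proof: both sides of the two bialgebra compatibility identities \eqref{eq:LB8} and \eqref{eq:LB9} are expanded on $E=A\oplus H$ using \eqref{sp01} and the combined coproduct $\Delta_A+\phi+\psi+P$, $\Delta_H+\rho+\gamma+Q$, and the resulting terms are matched componentwise, with the first identity yielding (CDM1)--(CDM14) and the second yielding (CDM15)--(CDM28) while (CBB1)--(CBB4) are absorbed as hypotheses. Your extra organization by the four input cases and four target components is just a cleaner bookkeeping of the same term-by-term comparison the paper carries out.
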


\begin{proof}  First, we need to check the first compatibility condition
$$
\begin{aligned}
&\Delta_{E}((a+x)(b+y))\\
=&\Delta_{E} (a+x)\bullet (b+y)+\tau\Delta_{E} (a+x)\bullet (b+y)\\
&- (b+y)\bullet\tau\Delta_{E}(a+x)+(a+x)\bullet \Delta_{E}(b+y)\\
&+[\Delta_{E}(b+y), (a+x)].
 \end{aligned}
 $$
The left hand side is equal to
\begin{eqnarray*}
&&\Delta_{E}((a+x)(b+y))\\
&=&\Delta_{E}(a b+x \ppr b+a\ppl y+\sigma(x, y)+ x y+x \trl b+a\trr y+\theta(a, b))\\
&=&\Delta_A(a b)+\Delta_A(x \ppr b)+\Delta_{A}(a\ppl y)+\Delta_{A}(\sigma(x, y))\\
&&+\phi(a b)+\phi(x \ppr b)+\phi(a\ppl y)+\phi(\sigma(x, y))\\
&&+\psi(a b)+\psi(x \ppr b)+\psi(a\ppl y)+\psi(\sigma(x, y))\\
&&+P(ab)+P(x \ppr b)+P(a\ppl y)+P(\sigma(x, y))\\
&&+\Delta_{H}(x y)+\Delta_{H}(x \trl b)+\Delta_{H}(a \trr y)+\Delta_{H}(\theta(a, b))\\
&&+\rho(x y)+\rho(x \trl b)+\rho(a \trr y)+\rho(\theta(a, b))\\
&&+\gamma(x y)+\gamma(x \trl b)+\gamma(a \trr y)+\gamma(\theta(a, b))\\
&&+Q(xy)+Q(x \trl b)+Q(a\trr y)+Q(\theta(a, b)),
\end{eqnarray*}
and the right hand side is equal to
\begin{eqnarray*}
&&\Delta_{E} (a+x)\bullet (b+y)+\tau\Delta_{E} (a+x)\bullet (b+y)- (b+y)\bullet\tau\Delta_{E}(a+x)\\
&&+(a+x)\bullet \Delta_{E}(b+y)+[\Delta_{E}(b+y), (a+x)]\\
&=&(a_{1} \otimes a_{2}+a_{(-1)} \otimes a_{(0)}+a_{(0)} \otimes a_{(1)}+a\ppi\ot a\pii+x_{1} \otimes x_{2}+x_{[-1]} \otimes x_{[0]}\\
&&+x_{[0]}\ot x_{[1]}+x\qi\ot x\qii) \bullet(b+ y)+(a_{2} \otimes a_{1}+a_{(0)} \otimes a_{(-1)}+a_{(1)} \otimes a_{(0)}\\
&&+a\pii\ot a\ppi+x_{2} \otimes x_{1}+x\boo\ot x\boi+x\bi\ot x\boo+x\qii\ot x\qi) \bullet(b+ y)\\
&&-(b+y)\bullet(a_{2} \otimes a_{1}+a_{(0)} \otimes a_{(-1)}+a_{(1)} \otimes a_{(0)}+a\pii\ot a\ppi+x_{2} \otimes x_{1}\\
&&+x\boo\ot x\boi+x\bi\ot x\boo+x\qii\ot x\qi)+(a+ x) \bullet(b_{1} \otimes b_{2}+b_{(-1)} \otimes b_{(0)}\\
&&+b_{(0)} \otimes b_{(1)}+b\ppi\ot b\pii+y_{1} \otimes y_{2}+y\boi\ot y\boo+y\boo\ot y\bi+y\qi\ot y\qii)\\
&&+[b_{1} \otimes b_{2}+b_{(-1)} \otimes b_{(0)}+b_{(0)} \otimes b_{(1)}+b\ppi\ot b\pii+y_{1} \otimes y_{2}+y\boi\ot y\boo\\
&&+y\boo\ot y\bi+y\qi\ot y\qii, a+x]\\
&=&a_{1}b \otimes a_{2}+(a\li\ppl y)\ot a\lii+(a\li\trr y)\ot a\lii+\theta(a\li, b)\ot a\lii\\
&&+(a\loi \ppr b)\ot a\loo+\sigma(a\loi, y)\ot a\loo+a\loi y\ot a\loo+(a\loi\trl b)\ot a\loo\\
&&+a\loo b\ot a\lmi+(a\loo\ppl y)\ot a\lmi+(a\loo\trr y)\ot a\lmi+\theta(a\loo, b)\ot a\lmi\\
&&+(a\ppi\ppr b)\ot a\pii+\sigma(a\ppi, y)\ot a\pii+a\ppi y\ot a\pii+(a\ppi\trl b)\ot a\pii\\
&&+(x\li\ppr b)\ot x\lii+\sigma(x\li, y)\ot x\lii+x\li y\ot x\lii+(x\li\trl b)\ot x\lii\\
&&+x\boi b\ot x\boo+(x\boi\ppl y)\ot x\boo+(x\boi\trr y)\ot x\boo+\theta(x\boi, b)\ot x\boo\\
&&+(x\boo\ppr b)\ot x\bi+\sigma(x\boo, y)\ot x\bi+x\boo y\ot x\bi+(x\boo\trl b)\ot x\bi\\
&&+x\qi b\ot x\qii+(x\qi\ppl y)\ot x\qii+(x\qi\trr y)\ot x\qii+\theta(x\qi, b)\ot x\qii\\
&&+a\lii b\ot a\li+(a\lii\ppl y)\ot a\li+(a\lii\trr y)\ot a\li+\theta(a\lii, b)\ot a\li\\
&&+a\loo b\ot a\loi+(a\loo\ppl y)\ot a\loi+(a\loo\trr y)\ot a\loi+\theta(a\loo, b)\ot a\loi\\
&&+(a\lmi\ppr b)\ot a\loo+\sigma(a\lmi, y)\ot a\loo+a\lmi y\ot a\loo+(a\li\trl b)\ot a\loo\\
&&+(a\pii\ppr b)\ot a\ppi+\sigma(a\pii, y)\ot a\ppi+a\pii y\ot a\ppi+(a\pii\trl b)\ot a\ppi\\
&&+(x\lii\ppr b)\ot x\li+\sigma(x\lii, y)\ot x\li+x\lii y\ot x\li+(x\lii\trl b)\ot x\li\\
&&+(x\boo\ppr b)\ot x\boi+\sigma(x\boo, y)\ot x\boi+x\boo y\ot x\boi+(x\boo\trl b)\ot x\boi\\
&&+x\bi b\ot x\boo+(x\bi\ppl y)\ot x\boo+(x\bi\trr y)\ot x\boo+\theta(x\bi, b)\ot x\boo\\
&&+x\qii b\ot x\qi+(x\qii\ppl y)\ot x\qi+(x\qii\trr y)\ot x\qi+\theta(x\qii, b)\ot x\qi\\
&&-a\lii\ot b a\li-a\lii\ot (y\ppr a\li)-a\lii\ot(y\trl a\li)-a\lii\ot\theta(b, a\li)\\
&&-a\loo\ot (b\ppl a\loi)-a\loo\ot\sigma(y, a\loi)-a\loo\ot y a\loi-a\loo\ot(b\trr a\loi)\\
&&-a\lmi\ot b a\loo-a\lmi\ot(y\ppr a\loo)-a\lmi\ot (y\trl a\loo)-a\lmi\ot\theta(b, a\loo)\\
&&-a\pii\ot(b\ppl a\ppi)-a\pii\ot\sigma(y, a\ppi)-a\pii\ot y a\ppi-a\pii\ot(b\trr a\ppi)\\
&&-x\lii\ot(b\ppl x\li)-x\lii\ot\sigma(y, x\li)-x\lii\ot y x\li-x\lii\ot (b\trr x\li)\\
&&-x\boo\ot b x\boi-x\boo\ot(y\ppr x\boi)-x\boo\ot(y\trl x\boi)-x\boo\ot\theta(b, x\boi)\\
&&-x\bi\ot(b\ppl x\boo)-x\bi\ot\sigma(y, x\boo)-x\bi\ot y x\boo-x\bi\ot(b\trr x\boo)\\
&&-x\qii\ot b x\qi-x\qii\ot(y\ppr x\qi)-x\qii\ot(y\trl x\qi)-x\qii\ot\theta(b, x\qi)\\
&&+b\li\ot a b\lii+b\li\ot (x\ppr b\lii)+b\li\ot(x\trl b\lii)+b\li\ot\theta(a, b\lii)\\
&&+b\loi\ot a b\loo+b\loi\ot (x\ppr b\loo)+b\loi\ot(x\trl b\loo)+a\loi\ot\theta(a, b\loo)\\
&&+b\loo\ot (a\ppl b\lmi)+b\loo\ot\sigma(x, b\lmi)+b\loo\ot x b\lmi+b\loo\ot(a\trr b\lmi)\\
&&+b\ppi\ot(a\ppl b\pii)+b\ppi\ot\sigma(x, b\pii)+b\ppi\ot x b\pii+b\ppi\ot(a\trr b\pii)\\
&&+y\li\ot (a\ppl y\lii)+y\li\ot\sigma(x, y\lii)+y\li\ot x y\lii+y\li\ot(a\trr y\lii)\\
&&+y\boi\ot(a\ppl y\boo)+y\boi\ot\sigma(x, y\boo)+y\boi\ot x y\boo+y\boi\ot(a\trr y\boo)\\
&&+y\boo\ot a y\bi+y\boo\ot(x\ppr y\bi)+y\boo\ot(x\trl y\bi)+y\boo\ot\theta(a, y\bi)\\
&&+y\qi\ot a y\qii+y\qi\ot(x\ppr y\qii)+y\qi\ot(x\trl y\qii)+y\qi\ot\theta(a, y\qii)\\
&&+b\li\ot b\lii a+b\li\ot (b\lii\ppl x)+b\li\ot(b\lii\trr x)+b\li\ot\theta(b\lii, a)\\
&&-a b\li\ot b\lii-(x\ppr b\li)\ot b\lii-(x\trl b\li)\ot b\lii-\theta(a, b\li)\ot b\lii\\
&&+b\loi\ot b\loo a+b\loi\ot(b\loo\ppl x)+b\loi\ot(b\loo\trr x)+b\loi\ot\theta(b\loo, a)\\
&&-(a\ppl b\loi)\ot b\loo-\sigma(x, b\loi)\ot b\loo-x b\loi\ot b\loo-(a\trr b\loi)\ot b\loo\\
&&+b\loo\ot (b\lmi\ppr a)+b\loo\ot\sigma(b\lmi, x)+b\loo\ot b\lmi x+b\loo\ot(b\lmi\trl a)\\
&&-a b\loo\ot b\lmi-(x\ppr b\loo)\ot b\lmi-(x\trl b\loo)\ot b\lmi-\theta(a, b\loo)\ot b\lmi\\
&&+b\ppi\ot (b\pii\ppr a)+b\ppi\ot \sigma(b\pii , x)+b\ppi\ot b\pii x+b\ppi\ot (b\pii\trl a)\\
&&-(a\ppl b\ppi)\ot b\pii-\sigma(x, b\ppi)\ot b\pii-x b\ppi\ot b\pii-(a\trr b\ppi)\ot b\pii\\
&&+y\li\ot (y\lii\ppr a)+y\li\ot\sigma(y\lii, x)+y\li\ot y\lii x+y\li\ot(y\lii\trl a)\\
&&-(a\ppl y\li)\ot y\lii-\sigma(x, y\li)\ot y\lii-x y\li\ot y\lii-(a\trr y\li)\ot y\lii\\
&&+y\boi\ot(y\boo\ppr a)+y\boi\ot\sigma(y\boo, x)+y\boi\ot y\boo x+y\boi\ot(y\boo\trl a)\\
&&-a y\boi\ot y\boo-(x\ppr y\boi)\ot y\boo-(x\trl y\boi)\ot y\boo-\theta(a, y\boi)\ot y\boo\\
&&+y\boo\ot y\bi a+y\boo\ot(y\bi\ppl x)+y\boo\ot(y\bi\trr x)+y\boo\ot\theta(y\bi, a)\\
&&-(a\ppl y\boo)\ot y\bi-\sigma(x, y\boo)\ot y\bi-x y\boo\ot y\bi-(a\trr y\boo)\ot y\bi\\
&&+y\qi\ot y\qii a+y\qi\ot(y\qii\ppl x)+y\qi\ot(y\qii\trr x)+y\qi\ot\theta(y\qii, a)\\
&&-a y\qi\ot y\qii-(x\ppr y\qi)\ot y\qii-(x\trl y\qi)\ot y\qii-\theta(a, y\qi)\ot y\qii.
\end{eqnarray*}
If we compare both the two sides item by item, one will find all the  cocycle double matched pair conditions (CDM1)--(CDM14) in
Definition \ref{cocycledmp}.

Next, we check the second compatibility condition
$$
\begin{aligned}
&\Delta_{E}((b+y)(a+x))+\tau\Delta_{E}((b+y)(a+x))\\
=&(b+y)\bullet \Delta_{E} (a+x)+(b+y)\cdot\tau\Delta_{E} (a+x)\\
&+ \Delta_{E}(b+y)\bullet (a+x)+\tau \Delta_{E}(b+y)\cdot (a+x).
 \end{aligned}
 $$
The left hand side is equal to
\begin{eqnarray*}
&&\Delta_{E}((b+y)(a+x))+\tau\Delta_{E}((b+y)(a+x))\\
&=&\Delta_E( b a+y \ppr a+b \ppl x+\sigma(y, x)+  y x+y\trl a+b\trr x+\theta(b, a))\\
&&+\tau\Delta_E( b a+y \trr a+b \trl x+\sigma(y, x)+  y x+y\trl a+b\trr x+\theta(b, a))\\
&=&\Delta_A( ba)+\Delta_A(y \ppr a)+\Delta_A(b \ppl x)+\Delta_A(\sigma(y, x))+\phi(b a)+\phi(y \ppr a)\\
&&+\phi(b \ppl x)+\phi(\sigma(y, x))+\psi(b a)+\psi(y \ppr a)+\psi(b \ppl x)+\psi(\sigma(y, x))\\
&&+P(b a)+P(y\ppr a)+P(b\ppl x)+P(\sigma(y, x))+\Delta_{H}(yx)+\Delta_{H}(y\trl a)\\
&&+\Delta_{H}(b\trr x)+\Delta_{H}(\theta(b, a))+\rho(y x)+\rho(y\trl a)+\rho(b\trr x)+\rho(\theta(b, a))\\
&&+\gamma(y x)+\gamma(y\trl a)+\gamma(b\trr x)+\gamma(\theta(b, a))+Q(y x)+Q(y\trl a)\\
&&+Q(b\trr x)+Q(\theta(b, a))+\tau\Delta_A( ba)+\tau\Delta_A(y \ppr a)+\tau\Delta_A(b \ppl x)\\
&&+\tau\Delta_A(\sigma(y, x))+\tau\phi(b a)+\tau\phi(y \ppr a)+\tau\phi(b \ppl x)+\tau\phi(\sigma(y, x))\\
&&+\tau\psi(b a)+\tau\psi(y \ppr a)+\tau\psi(b \ppl x)+\tau\psi(\sigma(y, x))+\tau P(b a)\\
&&+\tau P(y\ppr a)+\tau P(b\ppl x)+\tau P(\sigma(y, x))+\tau\Delta_{H}(yx)+\tau\Delta_{H}(y\trl a)\\
&&+\tau\Delta_{H}(b\trr x)+\tau\Delta_{H}(\theta(b, a))+\tau\rho(y x)+\tau\rho(y\trl a)+\tau\rho(b\trr x)\\
&&+\tau\rho(\theta(b, a))+\tau\gamma(y x)+\tau\gamma(y\trl a)+\tau\gamma(b\trr x)+\tau\gamma(\theta(b, a))\\
&&+\tau Q(y x)+\tau Q(y\trl a)+\tau Q(b\trr x)+\tau Q(\theta(b, a)),
\end{eqnarray*}
and the right hand side is equal to
\begin{eqnarray*}
&&(b+y)\bullet \Delta_{E} (a+x)+(b+y)\cdot\tau\Delta_{E} (a+x)\\
&&+ \Delta_{E}(b+y)\bullet (a+x)+\tau \Delta_{E}(b+y)\cdot (a+x)\\
&=&(b+y)\bullet(a_{1} \otimes a_{2}+a_{(-1)} \otimes a_{(0)}+a_{(0)} \otimes a_{(1)}+a\ppi\ot a\pii+x_{1} \otimes x_{2}\\
&&+x_{[-1]} \otimes x_{[0]}+x_{[0]}\ot x_{[1]}+x\qi\ot x\qii)+(b+y)\cdot(a_{2} \otimes a_{1}+a_{(0)} \otimes a_{(-1)}\\
&&+a_{(1)} \otimes a_{(0)}+a\pii\ot a\ppi+x_{2} \otimes x_{1}+x\boo\ot x\boi+x\bi\ot x\boo+x\qii\ot x\qi)\\
&&+(b_{1} \otimes b_{2}+b_{(-1)} \otimes b_{(0)}+b_{(0)} \otimes b_{(1)}+b\ppi\ot b\pii+y_{1} \otimes y_{2}+y\boi\ot y\boo\\
&&+y\boo\ot y\bi+y\qi\ot y\qii)\bullet(a+x)+(b\lii\ot b\li+b\loo\ot b\loi+b\lmi\ot b\loo\\
&&+b\pii\ot b\ppi+y\lii\ot y\li+y\boo\ot y\boi+y\bi\ot y\boo+y\qii\ot y\qi)\cdot (a+x)\\
&=&a\li\ot b a\lii+a\li\ot(y\ppr a\lii)+a\li\ot(y\trl a\lii)+a\li\ot\theta(b, a\lii)\\
&&+a\loi\ot b a\loo+a\loi\ot (y\ppr a\loo)+a\loi\ot(y\trl a\loo)+a\loi\ot\theta(b, a\loo)\\
&&+a\loo\ot(b\ppl a\lmi)+a\loo\ot\sigma(y, a\lmi)+a\loo\ot y a\lmi+a\loo\ot (b\trr a\lmi)\\
&&+a\ppi\ot(b\ppl a\pii)+a\ppi\ot\sigma(y, a\pii)+a\ppi\ot y a\pii+a\ppi\ot(b\trr a\pii)\\
&&+x\li\ot(b\ppl x\lii)+x\li\ot\sigma(y, x\lii)+x\li\ot y x\lii+x\li\ot(b\trr x\lii)\\
&&+x\boi\ot(b\ppl x\boo)+x\boi\ot\sigma(y, x\boo)+x\boi\ot y x\boo+x\boi\ot(b\trr x\boo)\\
&&+x\boo\ot b x\bi+x\boo\ot(y\ppr x\bi)+x\boo\ot(y\trl x\bi)+x\boo\ot\theta(b, x\bi)\\
&&+x\qi\ot b x\qii+x\qi\ot(y\ppr x\qii)+x\qi\ot(y\trl x\qii)+x\qi\ot\theta(b, x\qii)\\
&&+b a\lii\ot a\li+(y\ppr a\lii)\ot a\li+(y\trl a\lii)\ot a\li+\theta(b, a\lii)\ot a\li\\
&&+b a\loo\ot a\loi+(y\ppr a\loo)\ot a\loi+(y\trl a\loo)\ot a\loi+\theta(b, a\loo)\ot a\loi\\
&&+(b\ppl a\lmi)\ot a\loo+\sigma(y, a\lmi)\ot a\loo+y a\lmi\ot a\loo+(b\trr a\lmi)\ot a\loo\\
&&+(b\ppl a\pii)\ot a\ppi+\sigma(y, a\pii)\ot a\ppi+y a\pii\ot a\ppi+(b\trr a\pii)\ot a\ppi\\
&&+(b\ppl x\lii)\ot x\li+\sigma(y, x\lii)\ot x\li+y x\lii\ot x\li+(b\trr x\lii)\ot x\li\\
&&+(b\ppl x\boo)\ot x\boi+\sigma(y, x\boo)\ot x\boi+y x\boo\ot x\boi+(b\trr x\boo)\ot x\boi\\
&&+b x\bi\ot x\boo+(y\ppr x\bi)\ot x\boo+(y\trl x\bi)\ot x\boo+\theta(b, x\bi)\ot x\boo\\
&&+b x\qii\ot x\qi+(y\ppr x\qii)\ot x\qi+(y\trl x\qii)\ot x\qi+\theta(b, x\qii)\ot x\qi\\
&&+b\li a\ot b\lii+(b\li\ppl x)\ot b\lii+(b\li\trr x)\ot b\lii+\theta(b\li, a)\ot b\lii\\
&&+(b\loi\ppr a)\ot b\loo+\sigma(b\loi, x)\ot b\loo+b\loi x\ot b\loo+(b\loi\trl a)\ot b\loo\\
&&+b\loo a\ot b\lmi+(b\loo\ppl x)\ot b\lmi+(b\loo\trr x)\ot b\lmi+\theta(b\loo, a)\ot b\lmi\\
&&+(b\ppi\ppr a)\ot b\pii+\sigma(b\ppi, x)\ot b\pii+b\ppi x\ot b\pii+(b\ppi\trl a)\ot b\pii\\
&&+(y\li\ppr a)\ot y\lii+\sigma(y\li, x)\ot y\lii+y\li x\ot y\lii+(y\li\trl a)\ot y\lii\\
&&+y\boi a\ot y\boo+(y\boi\ppl x)\ot y\boo+(y\boi\trr x)\ot y\boo+\theta(y\boi, a)\ot y\boo\\
&&+(y\boo\ppr a)\ot y\bi+\sigma(y\boo, x)\ot y\bi+y\boo x\ot y\bi+(y\boo\trl a)\ot y\bi\\
&&+y\qi a\ot y\qii+(y\qi\ppl x)\ot y\qii+(y\qi\trr x)\ot y\qii+\theta(y\qi, a)\ot y\qii\\
&&+b\lii\ot b\li a+b\lii\ot(b\li\ppl x)+b\lii\ot (b\li\trr x)+b\lii\ot \theta(b\li, a)\\
&&+b\loo\ot(b\loi\ppr a)+b\loo\ot\sigma(b\loi, x)+b\loo\ot b\loi x+b\loo\ot(b\loi\trl a)\\
&&+b\lmi\ot b\loo a+b\lmi\ot(b\loo\ppl x)+b\lmi\ot(b\loo\trr x)+b\lmi\ot\theta(b\loo, a)\\
&&+b\pii\ot(b\ppi\ppr a)+b\pii\ot\sigma(b\ppi, x)+b\pii\ot b\ppi x+b\pii\ot(b\ppi\trl a)\\
&&+y\lii\ot(y\li\ppr a)+y\lii\ot\sigma(y\li, x)+y\lii\ot y\li x+y\lii\ot(y\li\trl a)\\
&&+y\boo\ot y\boi a+y\boo\ot(y\boi\ppl x)+y\boo\ot(y\boi\trr x)+y\boo\ot\theta(y\boi, a)\\
&&+y\bi\ot(y\boo\ppr a)+y\bi\ot\sigma(y\boo, x)+y\bi\ot y\boo x+y\bi\ot(y\boo\trl a)\\
&&+y\qii\ot y\qi a+y\qii\ot(y\qi\ppl x)+y\qii\ot(y\qi\trr x)+y\qii\ot\theta(y\qi, a).
\end{eqnarray*}
If we compare both the two sides item by item, one obtain all the  cocycle double matched pair conditions (CDM15)--(CDM28) in
Definition \ref{cocycledmp}.

This complete the proof.
\end{proof}

\section{Extending structures for alternative bialgebras}
In this section, we will study the extending problem for      alternative  bialgebras.
We will find some special cases when the braided  alternative  bialgebra is deduced into an ordinary     alternative  bialgebra.
It is proved that the extending problem can be solved by using of the non-abelian cohomology theory based on our cocycle bicrossedproduct for braided alternative  bialgebras in last section.

\subsection{Extending structures for  alternative  algebras }
First we are going to study extending problem for  alternative  algebras.

There are two cases for $A$ to be an  alternative  algebra in the cocycle cross product system defined in last section, see condition (CC11)--(CC12). The first case is when we let $\ppr$, $\ppl$ to be trivial and $\theta\neq 0$,  then from condition (CP9) we get $\si(x, \theta(a, b))+\si(x, \theta(b, a))=0$, since $\theta\neq 0$ we assume $\sigma=0$ for simplicity, thus  we obtain the following type $(a1)$  unified product for  alternative algebras.

\begin{lemma}
Let ${A}$ be an  alternative  algebra and $V$ a vector space. An extending datum of ${A}$ by $V$ of type (a1)  is  $\Omega^{(1)}({A},V)=(\trr, \trl, \theta)$ consisting of bilinear maps
\begin{eqnarray*}
\trr: A\otimes V \to V, \quad \trl: V\otimes A \to V, \quad\theta: A\ot A\to V.
\end{eqnarray*}
Denote by $A_{}\#_{\theta}V$ the vector space $E={A}\oplus V$ together with the multiplication given by
\begin{eqnarray}
(a+ x)(b+ y)=ab+\big( xy+x\trl b+a\trr y+\theta(a, b)\big).
\end{eqnarray}
Then $A_{}\# {}_{\theta}V$ is an  alternative  algebra if and only if the following compatibility conditions hold for all $a$, $b$, $c\in {A}$, $x$, $y$, $z\in V$:
\begin{enumerate}
\item[(A1)] $(ab +ba)\trr x+(\theta(a, b) +\theta(b, a)) x=a\trr (b\trr x)+b\trr (a\trr x) $,
\item[(A2)] $(ab) \trr x+\theta(a, b) x+(a\trr x)\trl b=a\trr (b\trr x+x\trl b) $,
\item[(A3)] $x \trl (a b)+x\theta(a, b)+a \trr (x\trl b)=(a\trr x+x \trl a) \trl b $,
\item[(A4)] $x \trl (a b+b a)+x(\theta(b, a)+\theta(a, b))=(x \trl a) \trl b+(x \trl b) \trl a $,
\item[(A5)] $ a \trr (x y)+x(a\trr y) =(a \trr x+x\trl a) y $,
\item[(A6)] $ a \trr (x y+y x) =(a \trr x)y+(a \trr y)x$,
\item[(A7)] $(x y+yx)\trl a = x(y \trl a)+y(x \trl a)$,
\item[(A8)] $(x y) \trl a +(x\trl a)y= x(y \trl a+a\trr y)$,
\item[(A9)]$\theta(a b, c)+\theta(ba, c)+(\theta(a, b)+\theta(b,a) ) \triangleleft c=\theta(a, b c)+\theta(b, a c)+b\trr \theta(a, c)+a\trr \theta(b, c),$
\item[(A10)]$\theta(a b, c)+\theta(ac, b)+\theta(a, b) \triangleleft c+\theta(a,c) \triangleleft b=\theta(a, b c)+\theta(a, cb)+a\trr (\theta(c, b)+ \theta(b, c)),$
\item[(A11)] $(x y)z-x(yz)=-(yx)z+y(xz)$,
\item[(A12)] $(x y)z-x(yz)=-(xz)y+x(zy)$.
\end{enumerate}
\end{lemma}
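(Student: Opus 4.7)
The plan is to verify directly that the two alternative algebra identities hold on $E = A \oplus V$ with the prescribed multiplication, and to read off (A1)--(A12) by comparing $V$-components in a case-by-case fashion.

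Concretely, I would write a general element of $E$ as $a+x$ with $a\in A$ and $x\in V$, and expand both sides of
\[
\bigl((a_1+x_1)(a_2+x_2)\bigr)(a_3+x_3)-(a_1+x_1)\bigl((a_2+x_2)(a_3+x_3)\bigr)
\]
together with its two alternative counterparts (obtained by swapping the first two or the last two factors and changing sign). By the definition of the product, the $A$-component of $(a+x)(b+y)$ is just $ab$; therefore the $A$-components of the two alternative identities for $E$ collapse to the alternative identities for $A$ itself, which hold by hypothesis. Hence the entire content of the lemma lives in the $V$-component.

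For the $V$-component, I would group summands according to the pattern in $\{A,V\}^3$ of where the three inputs come from. For each of the $2^3 = 8$ patterns, and for each of the two defining alternative identities, the corresponding $V$-valued equation must hold independently (since $A$ and $V$ are independent subspaces and the inputs range freely). This gives the expected $16$ conditions, which then match up as follows: the pattern $(A,A,A)$ produces the two cocycle-type identities (A9) and (A10) on $\theta$; the pattern $(V,V,V)$ produces (A11) and (A12), i.e.\ the alternative identities for the multiplication on $V$; and the six mixed patterns $(A,A,V)$, $(A,V,A)$, $(V,A,A)$, $(A,V,V)$, $(V,A,V)$, $(V,V,A)$ produce the compatibility conditions (A1)--(A8) relating $\trr$, $\trl$ and $\theta$ (with some patterns yielding two conditions, one for each alternative identity, coinciding or combining).

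The main obstacle is bookkeeping rather than any conceptual difficulty: one must carefully collect the $V$-valued terms produced by expanding each triple product and match them against the stated list. This is in fact a special case of the cocycle cross product construction proved earlier (with $\ppr, \ppl$ trivial and $\sigma=0$), so one may either invoke that theorem and verify that the surviving relations are precisely (A1)--(A12), or redo the (shorter) direct expansion as above. The converse direction is automatic: specializing the two alternative identities to inputs drawn from only $A$ or only $V$ reads off (A9)--(A12), while specializing to the mixed patterns gives (A1)--(A8), showing that the listed conditions are also necessary.
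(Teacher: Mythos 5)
Your proposal is correct and matches the paper's treatment: the paper obtains this lemma as the specialization of the cocycle cross product system (conditions (CP1)--(CP16)) with $\ppr$, $\ppl$ trivial and $\sigma=0$, and that general lemma is itself proved by exactly the direct expansion and component-by-component comparison you describe. Your observation that the $A$-component collapses to the alternativity of $A$ and that the $V$-component splits over the eight input patterns is the same bookkeeping the paper carries out.
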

Note that (A1)--(A8)  are deduced from (CP5)--(CP8) , (CP13)--(CP16) and by (A11)--(A12)  we obtain that $V$ is an  alternative  algebra. Furthermore, $V$ is in fact an alternative  subalgebra of $A_{}\#_{\theta}V$  but $A$ is not although $A$ is itself an alternative algebra.

Denote the set of all  algebraic extending datum of ${A}$ by $V$ of type (a1)  by $\mathcal{A}^{(1)}({A}, V)$.


In the following, we always assume that $A$ is a subspace of a vector space $E$, there exists a projection map $p: E \to{A}$ such that $p(a) = a$, for all $a \in {A}$.
Then the kernel space $V : = \ker(p)$ is also a subspace of $E$ and a complement of ${A}$ in $E$.

\begin{lemma}\label{lem:33-1}
Let ${A}$ be an  alternative   algebra and $E$ a vector space containing ${A}$ as a   subspace.
Suppose that there is an  alternative  algebra structure on $E$ such that $V$ is an  alternative  subalgebra of $E$
and the canonical projection map $p: E\to A$ is an  alternative  algebra homomorphism.
Then there exists an  alternative  algebraic extending datum $\Omega^{(1)}({A}, V)$ of ${A}$ by $V$ such that
$E\cong A_{}\#_{\theta}V$.
\end{lemma}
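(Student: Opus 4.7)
The plan is to exploit the given projection $p:E\to A$ to produce the required extending datum directly from the multiplication on $E$. Setting $V:=\ker(p)$, we have a vector space decomposition $E=A\oplus V$. For $a\in A$ and $x\in V$, since $p$ is an algebra homomorphism, $p(a\cdot_E x)=a\cdot p(x)=0$ and $p(x\cdot_E a)=0$, so both $a\cdot_E x$ and $x\cdot_E a$ lie in $V$; I define
$a\trr x := a\cdot_E x$ and $x\trl a := x\cdot_E a$. For $a,b\in A$, $p(a\cdot_E b)=ab$, hence $a\cdot_E b-ab\in V$, and I define $\theta(a,b):=a\cdot_E b-ab\in V$. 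Because $V$ is an alternative subalgebra, $x\cdot_E y\in V$ for all $x,y\in V$. This yields an extending datum $\Omega^{(1)}(A,V)=(\trr,\trl,\theta)$.

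Second, I would compute the product on $E$ in terms of this datum. For $a,b\in A$ and $x,y\in V$ bilinearity gives
\[
(a+x)(b+y)=ab+\theta(a,b)+a\trr y+x\trl b+xy,
\]
which is exactly the multiplication of $A\#_{\theta}V$. The candidate isomorphism is thus the identity map $\varphi:A\#_{\theta}V\to E$, $\varphi(a+x)=a+x$, which is an $A$-stabilizing linear bijection by construction and preserves the multiplication by the displayed identity.

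Third, I must verify that $\Omega^{(1)}(A,V)$ actually satisfies conditions (A1)--(A12), which is equivalent to showing $A\#_{\theta}V$ is an alternative algebra. For this I would invoke the fact that $E$ itself is alternative: the two alternativity relations $(u,v,w)+(v,u,w)=0$ and $(u,v,w)+(u,w,v)=0$ hold on $E$ for all choices $u,v,w\in A\cup V$. Running through the $2^3=8$ placement patterns of arguments in $\{A,V\}$ and projecting the resulting identities onto the $A$- and $V$-components yields, on the $A$-side, the alternativity of $A$ (already known), and on the $V$-side precisely the list (A1)--(A12). For instance, $(a,b,x)+(b,a,x)=0$ in $E$ projects to (A1); $(a,b,x)+(a,x,b)=0$ projects to (A2); the mixed pattern $(x,a,b)+(a,x,b)=0$ gives (A3), and so on, with the pure $V$-cases giving (A11)--(A12).

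The main obstacle is purely bookkeeping: matching each of the eight argument patterns under each of the two alternative identities with the correct one of (A1)--(A12), and keeping track of which terms land in $A$ versus $V$ under the decomposition $E=A\oplus V$. No genuinely new ingredient is required beyond careful use of the hypothesis that $p$ is an algebra homomorphism (which forces $\theta$ and the actions to take values in $V$) and that $V$ is a subalgebra (which keeps $xy$ inside $V$). Once this is done, $\varphi$ becomes an alternative algebra isomorphism, completing the proof.
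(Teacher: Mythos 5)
Your proposal is correct and follows essentially the same route as the paper: the same definitions $a\trr x:=a\cdot_E x$, $x\trl a:=x\cdot_E a$, $\theta(a,b):=a\cdot_E b-p(a\cdot_E b)=a\cdot_E b-ab$, $x\cdot_V y:=x\cdot_E y$, with the identity map as the isomorphism. The only difference is that you spell out the details the paper dismisses as "easy to see" (well-definedness of the maps via the homomorphism property of $p$, and the transport of the alternativity identities of $E$ to conditions (A1)--(A12)), which is a faithful completion rather than a new argument.
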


\begin{proof}
Since $V$ is an  alternative  subalgebra of $E$, we have $x\cdot_E y\in V$ for all $x, y\in V$.
We define the extending datum of ${A}$ through $V$ by the following formulas:
\begin{eqnarray*}
\trr: A\otimes {V} \to V, \qquad {a} \trr {x} &: =&{a}\cdot_E {x},\\
\trl: V\otimes {A} \to V, \qquad {x} \triangleleft {a} &: =&{x}\cdot_E {a},\\
\theta: A\otimes A \to V, \qquad \theta(a,b) &: =&a\cdot_E b-p \bigl(a\cdot_E b\bigl),\\
{\cdot_V}: V \otimes V \to V, \qquad {x}\cdot_V {y}&: =& {x}\cdot_E{y} .
\end{eqnarray*}
for any $a , b\in {A}$ and $x, y\in V$. It is easy to see that the above maps are  well defined and
$\Omega^{(1)}({A}, V)$ is an extending system of
${A}$ trough $V$ and
\begin{eqnarray*}
\varphi: A_{}\#_{\theta}V\to E, \qquad \varphi(a+ x) : = a+x
\end{eqnarray*}
is an isomorphism of   alternative  algebras.
\end{proof}

\begin{lemma}
Let $\Omega^{(1)}(A, V) = \bigl(\trr,  \trl, \theta, \cdot \bigl)$ and $\Omega'^{(1)}(A, V) = \bigl(\trr ', \, \trl', \theta', \cdot ' \bigl)$
be two algebraic extending datums of ${A}$ by $V$ of type (a1) and $A_{}\#_{\theta} V$, $A_{}\#_{\theta'} V$ be the corresponding unified products. Then there exists a bijection between the set of all homomorphisms of  alternative  algebras $\varphi: A_{\theta}\#_{\trr, \trl} V\to A_{\theta'}\#_{\trr', \trl'} V$ whose restriction on ${A}$ is the identity map and the set of pairs $(r, s)$, where $r: V\rightarrow {A}$ and $s: V\rightarrow V$ are two linear maps satisfying
\begin{eqnarray}
&&{r}(x\trl a)={r}(x)\cdot' a,\\
&&{r}(a\trr x)= a\cdot'{r}(y),\\
&&a\cdot' b=ab+r\theta(a, b),\\
&&{r}(xy)={r}(x)\cdot' {r}(y),\\
&&{s}(x)\trl' a+\theta'(r(x), a)={s}(x\trl a),\\
&&a\trr'{s}(y)+\theta'(a, r(y) )={s}(a\trr y),\\
&&\theta'(a, b)=s\theta(a, b),\\
&&{s}(xy)={s}(x)\cdot' {s}(y)+{s}(x)\trl'{r}(y)+{r}(x)\trr'{s}(y)+\theta'(r(x), r(y)),
\end{eqnarray}
for all $a, b\in{A}$ and $x$, $y\in V$.

Under the above bijection the homomorphism of   alternative  algebras $\varphi=\varphi_{r, s}: A_{}\#_{\theta}V\to A_{}\#_{\theta'} V$ to $(r, s)$ is given  by $\varphi(a+x)=(a+r(x))+ s(x)$ for all $a\in {A}$ and $x\in V$. Moreover, $\varphi=\varphi_{r, s}$ is an isomorphism if and only if $s: V\rightarrow V$ is a linear isomorphism.
\end{lemma}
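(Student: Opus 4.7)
The plan is to establish the correspondence in both directions. Given an alternative algebra homomorphism $\varphi\colon A\#_{\theta}V \to A\#_{\theta'}V$ with $\varphi|_A = \id_A$, for each $x \in V$ decompose $\varphi(x) = r(x) + s(x)$ along the direct sum $A \oplus V$ of the codomain. This produces linear maps $r\colon V\to A$ and $s\colon V\to V$, and by linearity together with the condition on $A$ one obtains $\varphi(a+x) = (a+r(x)) + s(x)$ for all $a\in A$ and $x\in V$.

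The key computation is to expand the homomorphism identity $\varphi\bigl((a+x)(b+y)\bigr) = \varphi(a+x)\cdot_{E'}\varphi(b+y)$ using the unified product formula $(a+x)(b+y) = ab + (xy + x\trl b + a\trr y + \theta(a,b))$ of type $(a1)$ on both sides. The left-hand side splits into $A$-component $ab + r(xy) + r(x\trl b) + r(a\trr y) + r\theta(a,b)$ and $V$-component $s(xy) + s(x\trl b) + s(a\trr y) + s\theta(a,b)$. The right-hand side, obtained by multiplying $(a + r(x)) + s(x)$ with $(b + r(y)) + s(y)$ in $A\#_{\theta'}V$, has $A$-component $(a+r(x))(b+r(y))$ and a $V$-component combining $s(x)s(y)$, the $\trl'$ and $\trr'$ actions of $A$ on $s(x), s(y)$, and the $\theta'$-cocycle evaluated on $(a+r(x), b+r(y))$. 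Specializing to the four subcases $(x=y=0)$, $(x=0,\,b=0)$, $(a=0,\,y=0)$ and $(a=b=0)$, and comparing $A$- and $V$-components separately, yields precisely the eight displayed conditions on $(r,s)$.

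For the converse, given a pair $(r,s)$ satisfying these conditions, define $\varphi(a+x) := (a + r(x)) + s(x)$; running the same expansion in reverse shows that $\varphi$ respects multiplication, and $\varphi|_A = \id_A$ is clear from the definition. For the final isomorphism statement, note that as a linear endomorphism of $A \oplus V$ the map $\varphi$ is upper triangular with diagonal blocks $\id_A$ and $s$, so it is invertible (and hence an alternative algebra isomorphism) if and only if $s$ is a linear isomorphism; when $s$ is invertible, the two-sided inverse is given explicitly by $(a'+x') \mapsto (a' - r(s^{-1}(x'))) + s^{-1}(x')$.

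The main obstacle is purely bookkeeping: the single homomorphism identity produces many terms upon expansion, and one must carefully record which specialization of $(a,b,x,y)$ pulls out which of the eight conditions. Conceptually, however, the entire argument is just the unified-product multiplication identity unfolded on the bilinear basis of subcases, so no new ideas beyond those already used in the construction of $A\#_\theta V$ are required.
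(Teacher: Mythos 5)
Your proposal follows the same route as the paper: decompose $\varphi(a+x)=(a+r(x))+s(x)$, expand both sides of $\varphi\bigl((a+x)(b+y)\bigr)=\varphi(a+x)\cdot'\varphi(b+y)$ using the two unified products, and read off the compatibility conditions by comparing $A$- and $V$-components over the four bilinear subcases. It is correct, and in fact slightly more complete than the paper's own proof, which leaves the term-by-term extraction, the converse direction, and the isomorphism criterion implicit.
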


\begin{proof}
Let $\varphi: A_{}\#_{\theta}V\to A_{}\#_{\theta'} V$  be an  alternative  algebra homomorphism  whose restriction on ${A}$ is the identity map. Then $\varphi$ is determined by two linear maps $r: V\rightarrow {A}$ and $s: V\rightarrow V$ such that
$\varphi(a+x)=(a+r(x))+s(x)$ for all $a\in {A}$ and $x\in V$.
In fact, we have to show
$$\varphi((a+ x)(b+ y))=\varphi(a+ x)\cdot'\varphi(b+ y).$$
The left hand side is equal to
\begin{eqnarray*}
&&\varphi((a+ x)(b+ y))\\
&=&\varphi\left({ab}+  x\trl b+a\trr y+{xy}+\theta(a, b)\right)\\
&=&{ab}+ r(x\trl b)+r(a\trr y)+r({xy})+r\theta(a, b)\\
&&+ s(x\trl b)+s(a\trr y)+s({xy})+s\theta(a, b),
\end{eqnarray*}
and the right hand side is equal to
\begin{eqnarray*}
&&\varphi(a+ x)\cdot' \varphi(b+ y)\\
&=&(a+r(x)+s(x))\cdot'  (b+r(y)+s(y))\\
&=&(a+r(x))\cdot' (b+r(y))+  s(x)\trl'(b+r(y))+(a+r(x))\trr's(y)\\
&& +s(x)\cdot' s(y)+\theta'(a+r(x), b+r(y)).
\end{eqnarray*}
Thus $\varphi$ is a homomorphism of alternative algebras if and only if the above conditions hold.
\end{proof}

The second case is when $\theta=0$,  we obtain the following type (a2)  unified product.
\begin{theorem}(\cite{Z5})
Let $A$ be an  alternative  algebra and $V$ a  vector space. An \textit{extending
datum of $A$ through $V$} of type (a2)  is a system $\Omega(A, V) =
\bigl(\triangleleft, \, \triangleright, \, \leftharpoonup, \,
\rightharpoonup, \, \sigma \bigl)$ consisting of linear maps
\begin{eqnarray*}
&&\ppr: V \otimes A \to A, \quad \ppl: A \otimes V \to A, \quad\triangleleft : V \otimes A \to V, \quad \trr: A \otimes V \to V ,
 \quad\sigma: V\otimes V \to A.
\end{eqnarray*}
Denote by $A_{}\# {}_{\sigma}V$ the vector space $E={A}\oplus V$ together with the multiplication
\begin{align}
(a+ x)(b+ y)=\big(ab+x\ppr b+a\ppl y+\sigma(x, y)\big)+\big( xy+x\trl b+a\trr y\big).
\end{align}
Then $A_{}\# {}_{\sigma}V$  is an alternative algebra if and only if the following compatibility conditions hold for any $a, b\in A$,
$x, y, z\in V$:
\begin{enumerate}
\item[(B1)] $x\ppr (ab)+a(x\ppr b)+a\ppl(x\trl b)=(x\ppr a+a\ppl x)b+(x\trl a+a\trr x)\ppr b $,
\item[(B2)] $x\ppr (ab+ ba)=(x\ppr a)b+(x\trl a)\ppr b+(x\ppr b)a+(x\trl b)\ppr a $,
\item[(B3)] $(ab) \ppl x+(a\ppl x)b+(a\trr x)\ppr b=a(b\ppl x+x\ppr b)+a\ppl(b\trr x+x\trl b) $,
\item[(B4)] $(ab +ba )\ppl x=a(b\ppl x)+a\ppl(b\trr x)+b(a\ppl x)+b\ppl(a\trr x)$,
\item[(B5)] $(xy)\ppr a+(x\ppr a)\ppl y+\sigma(x\trl a, y)+\sigma(x,y)a\\
=x\ppr(y\ppr a+a\ppl y)+\sigma(x, y\trl a)+\sigma(x, a\trr y)$,
\item[(B6)] $(xy+yx)\ppr a+(\sigma(x, y)+\sigma(y, x))a\\
=x\ppr(y\ppr a)+\sigma(x, y\trl a)+y\ppr(x\ppr a)+\sigma(y, x\trl a)$,
\item[(B7)] $a\ppl (x y)+x\ppr(a\ppl y)+\sigma(x,a\trr y)+a\sigma(x, y)\\
=(a\ppl x+x\ppr a)\ppl y+\sigma(a\trr x, y)+\sigma(x\trl a, y)$,
\item[(B8)] $a\ppl (x y+ y x)+a(\sigma(x, y)+\sigma(y, x))\\
=(a\ppl x)\ppl y+\sigma(a\trr x, y)+(a\ppl y)\ppl x+\sigma(a\trr y, x)$,
\item[(B9)] $x\trl (ab)+a\trr(x\trl b)=(x\trl a+a\trr x)\trl b  $,
\item[(B10)] $x\trl (ab+ba)=(x\trl a)\trl b+(x\trl b)\trl a  $,
\item[(B11)] $(ab)\trr x+(a\trr x)\trl b=a\trr(b\trr x+x\trl b) $,
\item[(B12)] $(ab+ba)\trr x=a\trr(b\trr x)+b\trr(a\trr x)$,
\item[(B13)] $(xy)\trl a+(x\trl a)y+(x\ppr a)\trr y\\
=x(y\trl a+a\trr y)+x\trl(y\ppr a+a\ppl y)$,
\item[(B14)] $(xy+yx)\trl a=x(y\trl a)+x\trl(y\ppr a)+y(x\trl a)+y\trl(x\ppr a)$,
\item[(B15)] $a\trr (xy)+x(a\trr y)+x\trl(a\ppl y)\\
=(a\ppl x+x\ppr a)\trr y+(a\trr x+x\trl a)y$,
\item[(B16)] $a\trr (xy+ yx)=(a\ppl x)\trr y+(a\trr x)y+(a\ppl y)\trr x+(a\trr y)x$,
\item[(B17)] $\sigma(x y, z)+(\sigma(y,x)+\sigma(x, y))\ppl z+\sigma(yx, z)\\
=\sigma(y,xz)+{\sigma}(x, y z)+y\ppr\sigma(x,z)+x \ppr \sigma(y, z),$
\item[(B18)] $\sigma(x y, z)+\sigma(x z, y)+\sigma(x, y)\ppl z+\sigma(x, z)\ppl y\\
={\sigma}(x, z y)+{\sigma}(x, y z)+x \ppr (\sigma(z, y)+ \sigma(y, z))$,
\item[(B19)] $(x y) z+(y x)z+(\sigma(x,y)+\sigma(y, x))\trr z\\
=x(y z)+y(x z)+y\trl\sigma(x, z)+x\trl\sigma(y, z),$
\item[(B20)] $(x y) z+(x z)y+\sigma(x,y)\trr z+\sigma(x, z)\trr y\\
=x(y z)+x(z y)+x\trl(\sigma(z, y)+\sigma(y, z))$.
\end{enumerate}
\end{theorem}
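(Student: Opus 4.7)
The plan is to verify directly that the multiplication on $E = A \oplus V$ given by
$$(a+x)(b+y) = \bigl(ab + x\ppr b + a\ppl y + \sigma(x,y)\bigr) + \bigl(xy + x\trl b + a\trr y\bigr)$$
satisfies the two defining identities of an alternative algebra, namely $(u,v,w) = -(v,u,w)$ and $(u,v,w) = -(u,w,v)$, for arbitrary $u = a+x$, $v = b+y$, $w = c+z$. Since each product $(uv)w$ and $u(vw)$ naturally decomposes into an $A$-component and a $V$-component, each of the two alternative identities for $E$ is equivalent to two scalar equations (one in $A$, one in $V$). So I would first expand both $\bigl((a+x)(b+y)\bigr)(c+z)$ and $(a+x)\bigl((b+y)(c+z)\bigr)$ using the multiplication rule above, keeping careful track of which summands live in $A$ and which in $V$.

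Next, I would sort the expanded terms by the number of $V$-inputs they contain. The pure $A$-input terms $(ab)c - a(bc)$ give the associator of $A$, which already satisfies the two alternative identities by hypothesis, so they contribute nothing new. The remaining mixed terms fall into four homogeneous classes: one $V$-input (either in the left or $V$-valued slot), which must combine to produce the bimodule-like compatibilities; two $V$-inputs landing in $A$, which generate the $\sigma$-cocycle mixing conditions; two $V$-inputs landing in $V$, giving the twisted bimodule relations; and three $V$-inputs, which produce the associator on $V$ modified by $\sigma$. Writing $(u,v,w) = -(v,u,w)$ and collecting the $A$-component terms with one $V$-entry yields (B1)--(B4); the $V$-component terms with one $V$-entry yield (B9)--(B12); the $A$-valued terms with two $V$-entries yield (B5)--(B8); and the $V$-valued terms with two $V$-entries yield (B13)--(B16). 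Finally the triples $xyz$ produce (B17)--(B20), which assert that $(V,\sigma)$ behaves as a cocycle alternative algebra over $A$.

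Doing the identical expansion for the second alternative identity $(u,v,w) = -(u,w,v)$ gives the complementary halves of the same groups (B1)--(B20); each condition arises from exactly one of the two alternative identities, with the two families jointly covering all 20. The converse is immediate: if all (B1)--(B20) hold, then reading the calculation backwards shows that both alternative identities for $E$ are satisfied.

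The main obstacle is purely organizational rather than conceptual: the expansion generates dozens of mixed terms per identity, and it is easy to mis-place a term between the $A$- and $V$-components or to confuse analogous contributions from $\ppl,\ppr$ versus $\trl,\trr$. To control this, I would carry out the computation in full analogy with the cocycle cross product lemma proved in Section~\ref{subsetion-4.2} (taking $\theta = 0$ and $H = V$ with $V$ a genuine alternative algebra), whose term-by-term matching of the two associator identities against conditions (CP1)--(CP16) is exactly the template needed here. In effect, the present theorem is the specialization of that lemma to the case $\theta = 0$, and the list (B1)--(B20) is what (CP1)--(CP16) collapse to together with the alternativity of the restriction of the multiplication to $V\otimes V\otimes V$.
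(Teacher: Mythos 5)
Your proposal is correct and matches the method the paper uses: the paper cites this theorem from \cite{Z5} without reproving it, but its proof of the more general cocycle cross product lemma in Section 4.2 is exactly the direct expansion of the two associator identities with term-by-term matching that you describe, and setting $\theta=0$ there does collapse (CP1)--(CP16) together with (CC1)--(CC2), (CC9)--(CC10) to the list (B1)--(B20). Your bookkeeping by number of $V$-inputs and by $A$- versus $V$-valued component, and your observation that each of (B1)--(B20) comes from exactly one of the two alternative identities, are both accurate.
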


\begin{theorem}(\cite{Z5})
Let $A$ be an  alternative  algebra, $E$ a  vector space containing $A$ as a subspace.
If there is an  alternative  algebra structure on $E$ such that $A$ is an  alternative  subalgebra of $E$. Then there exists an  alternative  algebraic extending structure $\Omega(A, V) = \bigl(\triangleleft, \, \triangleright, \,
\leftharpoonup, \, \rightharpoonup, \, \sigma \bigl)$ of $A$ through $V$ such that there is an isomorphism of    alternative  algebras $E\cong A_{\sigma}\#_{}H$.
\end{theorem}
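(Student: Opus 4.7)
The plan is to mimic the construction of Lemma~\ref{lem:33-1}, but now working with a genuine complement $V$ of $A$ in $E$ which need not be a subalgebra, so that the $A$-parts of mixed products $V\cdot A$, $A\cdot V$, $V\cdot V$ produce the actions $\ppr,\ppl$ and the cocycle $\sigma$.

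First I would fix a linear complement $V$ of $A$ in $E$ and let $p:E\to A$ be the associated projection (so $\ker p=V$), together with $q=\id_E-p:E\to V$. Since $A$ is a subalgebra, $a\cdot_E b\in A$ for all $a,b\in A$, so only the four mixed products need to be decomposed. For $a\in A$, $x,y\in V$ I would define
\begin{align*}
a\ppl x &: = p(a\cdot_E x), & a\trr x &: = q(a\cdot_E x),\\
x\ppr a &: = p(x\cdot_E a), & x\trl a &: = q(x\cdot_E a),\\
\sigma(x,y) &: = p(x\cdot_E y), & x\cdot_V y &: = q(x\cdot_E y),
\end{align*}
thereby obtaining an extending datum $\Omega(A,V)=(\trl,\trr,\ppl,\ppr,\sigma)$ of type (a2).

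Next I would check that the bijection $\varphi: A_{\sigma}\# V\to E$ given by $\varphi(a+x):=a+x$ is, by the very definition of the maps above, a homomorphism of the underlying multiplicative structures. Indeed, for $a,b\in A$ and $x,y\in V$,
\[
(a+x)\cdot_E(b+y) = ab+\bigl(p(a\cdot_E y)+p(x\cdot_E b)+p(x\cdot_E y)\bigr)+\bigl(q(a\cdot_E y)+q(x\cdot_E b)+q(x\cdot_E y)\bigr),
\]
which matches the definition of the unified product $A_{\sigma}\# V$. So the only remaining point is to show that $A_{\sigma}\# V$ really is an alternative algebra, i.e.\ that the twenty conditions (B1)--(B20) are satisfied.

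The main (and only) work is this verification, but it is entirely mechanical: since $E$ is alternative, the identities
\[
(u,v,w)_E=-(v,u,w)_E,\qquad (u,v,w)_E=-(u,w,v)_E
\]
hold for all $u,v,w\in E$. Substituting the eight possible pure choices $u,v,w\in A\cup V$ and then applying $p$ and $q$ to both sides separates each identity into an $A$-component and a $V$-component; the resulting eight pairs of equations are exactly (B1)--(B20) once rewritten via the definitions of $\ppr,\ppl,\trr,\trl,\sigma$ and $\cdot_V$. The potentially tedious step is the bookkeeping: one must carefully track which input combinations yield which of (B1)--(B20), noting that the $AAA$ case gives nothing new (it merely re-expresses alternativity of $A$), the $VVV$ case yields the last four relations (B17)--(B20), and the six mixed cases $(AAV, AVA, VAA, AVV, VAV, VVA)$ produce the remaining sixteen in pairs. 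Once this dictionary is set up, no further computation is needed.
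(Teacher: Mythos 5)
Your proposal is correct and follows exactly the approach the paper (and the cited reference \cite{Z5}) uses: define $\ppr,\ppl,\trr,\trl,\sigma,\cdot_V$ as the $p$- and $q$-components of the mixed products in $E$ (with $\theta=0$ automatic since $A$ is a subalgebra), note that $\varphi(a+x)=a+x$ then tautologically intertwines the multiplications, and transport alternativity of $E$ to obtain (B1)--(B20); this is the same scheme as the paper's proof of Lemma 5.2 for the type (a1) case. The only cosmetic quibble is that the mixed cases do not split into sixteen conditions quite "in pairs" as you describe (the $AAV$-type inputs account for (B1)--(B4) and (B9)--(B12), the $AVV$-type for (B5)--(B8) and (B13)--(B16)), but this does not affect the argument.
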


\begin{lemma}
Let $\Omega^{(1)}(A, V) = \bigl(\trr, \trl, \leftharpoonup,  \rightharpoonup,  \sigma,  \cdot \bigl)$ and
$\Omega'^{(1)}(A, V) = \bigl(\trr', \trl ', \leftharpoonup ',  \rightharpoonup ', \sigma ', \cdot ' \bigl)$
be two  algebraic extending structures of $A$ through $V$ and $A{}_{\sigma}\#_{}V$, $A{}_{\sigma'}\#_{}  V$ the  associated unified
products. Then there exists a bijection between the set of all
homomorphisms of algebras $\psi: A{}_{\sigma}\#_{}V\to A{}_{\sigma'}\#_{}  V$which
stabilize $A$ and the set of pairs $(r, s)$, where $r: V \to
A$, $s: V \to V$ are linear maps satisfying the following
compatibility conditions for any $a \in A$, $x$, $y \in V$:
\begin{enumerate}
\item[(M1)] $r(x \cdot y) = r(x)\cdot'r(y) + \sigma ' (s(x), s(y)) - \sigma(x, y) + r(x) \ppl' s(y) + s(x) \ppr' r(y)$,
\item[(M2)] $s(x \cdot y) = r(x) \trr ' s(y) + s(x)\trl ' r(y) + s(x) \cdot ' s(y)$,
 \item[(M3)] $r(x\trl  {a}) = r(x)\cdot' {a} - x \ppr {a} + s(x) \ppr' {a}$,
  \item[(M5)] $r({a} \trr x) = {a}\cdot'r(x) - {a}\ppl x + {a} \ppl' s(x)$,
 \item[(M4)] $s(x\trl {a}) = s(x)\trl' {a}$,
 \item[(M6)] $s({a}\trr x) = {a} \trr' s(x)$.
\end{enumerate}
Under the above bijection the homomorphism of algebras $\varphi =\varphi _{(r, s)}: A_{\sigma}\# {}_{}H \to A_{\sigma'}\# {}_{}H$ corresponding to
$(r, s)$ is given for any $a\in A$ and $x \in V$ by:
$$\varphi(a+ x) = (a + r(x))+ s(x).$$
Moreover, $\varphi  = \varphi _{(r, s)}$ is an isomorphism if and only if $s: V \to V$ is an isomorphism linear map.
\end{lemma}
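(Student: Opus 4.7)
The plan is to proceed in the spirit of the preceding lemma for the type (a1) case: any algebra homomorphism $\varphi: A_{\sigma}\# V \to A_{\sigma'}\# V$ that stabilizes $A$ is completely determined by a pair of linear maps $(r,s)$, and the homomorphism property is equivalent to the six conditions (M1)--(M6). Specifically, since $\varphi$ stabilizes $A$ we have $\varphi(a)=a$ for every $a\in A$, and for $x\in V$ we uniquely decompose $\varphi(x)=r(x)+s(x)$ with $r(x)\in A$ and $s(x)\in V$, giving $\varphi(a+x)=(a+r(x))+s(x)$. Conversely, any such pair $(r,s)$ produces a unique linear map $\varphi_{(r,s)}$ of this form. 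The first step is to record this bijective correspondence at the level of linear maps, before translating multiplicativity into conditions on $(r,s)$.

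To extract (M1)--(M6), I would test the homomorphism identity $\varphi((a+x)(b+y))=\varphi(a+x)\cdot'\varphi(b+y)$ on the four bilinear cases $(a,0)\cdot(b,0)$, $(a,0)\cdot(0,y)$, $(0,x)\cdot(b,0)$, and $(0,x)\cdot(0,y)$ in turn. In each case I expand both sides using the definitions of the unified products on $A_{\sigma}\# V$ and $A_{\sigma'}\# V$, then separate the resulting identity into its $A$- and $V$-components. The first case gives no condition (since $\varphi|_A=\id_A$ and both structures induce the same multiplication on $A$); the second case yields (M5) (in $A$) and (M6) (in $V$); the third case gives (M3) and (M4); and the fourth case, which is the only one involving the cocycles $\sigma$ and $\sigma'$, produces (M1) and (M2). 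The converse direction is then immediate by reversing these computations using bilinearity: every product $(a+x)\cdot(b+y)$ decomposes into the four bilinear cases above, so (M1)--(M6) collectively guarantee that $\varphi_{(r,s)}$ respects multiplication.

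For the final isomorphism claim, I would prove $\varphi_{(r,s)}$ is bijective if and only if $s:V\to V$ is a linear isomorphism. Since $\varphi$ stabilizes $A$, the short exact sequence $0\to A\to E\to V\to 0$ is preserved and the map induced on the quotient $V\cong E/A$ is exactly $s$; this yields the ``only if'' direction. For the ``if'' direction, the explicit inverse is $\varphi^{-1}(a+x)=\bigl(a-r(s^{-1}(x))\bigr)+s^{-1}(x)$, which one checks is again of the form $\varphi_{(r',s')}$ with $s'=s^{-1}$ and $r'=-r\circ s^{-1}$. The main obstacle in the whole argument is purely bookkeeping in the middle paragraph: one must carefully track, in the case-$(0,x)\cdot(0,y)$ expansion, that $\sigma(x,y)$ contributes to the $A$-component on the left while $\sigma'(s(x),s(y))$ contributes on the right, giving rise to the signed $-\sigma(x,y)$ term in (M1); beyond this the verifications parallel the type (a1) computation already carried out in the preceding lemma.
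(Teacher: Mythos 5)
Your proposal is correct and follows essentially the same route the paper uses (the paper omits the proof of this particular lemma, but its proof of the analogous type (a1) lemma expands $\varphi((a+x)(b+y))$ and $\varphi(a+x)\cdot'\varphi(b+y)$ and compares $A$- and $V$-components, which is exactly your computation reorganized into the four bilinear cases). Your case-by-case bookkeeping correctly recovers (M1)--(M6), including the signed $-\sigma(x,y)$ term, and the isomorphism criterion via the induced map on $V\cong E/A$ matches the paper's standard argument.
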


Let ${A}$ be an  alternative  algebra and $V$ a vector space. Two algebraic extending systems $\Omega^{(i)}({A}, V)$ and ${\Omega'^{(i)}}({A}, V)$  are called equivalent if $\varphi_{r, s}$ is an isomorphism.  We denote it by $\Omega^{(i)}({A}, V)\equiv{\Omega'^{(i)}}({A}, V)$.
From the above lemmas, we obtain the following result.

\begin{theorem}\label{thm3-1}
Let ${A}$ be an  alternative  algebra, $E$ a vector space containing ${A}$ as a subspace and
$V$ be a complement of ${A}$ in $E$.
Denote $\mathcal{HA}(V, {A}):=\mathcal{A}^{(1)}({A}, V)\sqcup \mathcal{A}^{(2)}({A}, V) /\equiv$. Then the map
\begin{eqnarray}
\notag&&\Psi: \mathcal{HA}(V, {A})\rightarrow Extd(E,{A}),\\
&&\overline{\Omega^{(1)}({A}, V)}\mapsto A_{}\#_{\theta} V, \quad \overline{\Omega^{(2)}({A}, V)}\mapsto A_{\sigma}\# {}_{} V
\end{eqnarray}
is bijective, where $\overline{\Omega^{(i)}({A}, V)}$ is the equivalence class of $\Omega^{(i)}({A}, V)$ under $\equiv$.
\end{theorem}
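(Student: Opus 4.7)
The plan is to prove the bijectivity of $\Psi$ in three steps: well-definedness, surjectivity, and injectivity, treating the two types of extending data (a1) and (a2) in parallel, since they correspond to the two distinct ways in which the extending condition can be satisfied (namely, whether the canonical projection $p\colon E\to A$ or the canonical injection $i\colon A\to E$ is the asserted alternative algebra homomorphism).

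For well-definedness, I would observe that the equivalence $\equiv$ on $\mathcal{A}^{(1)}(A,V)\sqcup\mathcal{A}^{(2)}(A,V)$ was defined precisely so that $\Omega\equiv\Omega'$ iff $\varphi_{r,s}$ is an isomorphism of alternative algebras for some pair $(r,s)$ with $s$ bijective. Since $\varphi_{r,s}(a+x)=(a+r(x))+s(x)$ stabilizes $A$ by construction, the two associated unified products represent the same class in $Extd(E,A)$, so $\Psi$ is well-defined on equivalence classes.

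For surjectivity, let $(E,\cdot_E)$ be any alternative algebra structure on $E$ extending $A$, and fix a linear projection $p\colon E\to A$ with kernel $V$. By the definition of extending system, either $i\colon A\to E$ or $p\colon E\to A$ is an algebra homomorphism. If $p$ is a homomorphism then $V=\ker p$ is a subalgebra of $E$, and Lemma \ref{lem:33-1} produces an extending datum $\Omega^{(1)}(A,V)=(\trr,\trl,\theta,\cdot_V)\in\mathcal{A}^{(1)}(A,V)$ together with an isomorphism $A\#_\theta V\cong E$ which is the identity on $A$. If $i$ is a homomorphism then $A$ is a subalgebra, and the construction preceding the type (a2) theorem, with
\[
x\ppr a := p(x\cdot_E a),\quad x\trl a := x\cdot_E a - p(x\cdot_E a),\quad \sigma(x,y):=p(x\cdot_E y),
\]
(and the symmetric formulas for $\ppl,\trr$ and $x\cdot_V y$) yields an extending datum $\Omega^{(2)}(A,V)\in\mathcal{A}^{(2)}(A,V)$ with $A_\sigma\#V\cong E$ stabilizing $A$. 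Either way, the class of $E$ in $Extd(E,A)$ lies in the image of $\Psi$.

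For injectivity, suppose $\Psi(\overline{\Omega})=\Psi(\overline{\Omega'})$, so that the two unified products are isomorphic via some $\varphi$ stabilizing $A$. Writing $\varphi(a+x)=a+\pi_A(\varphi(x))+\pi_V(\varphi(x))$ and setting $r:=\pi_A\circ\varphi|_V\colon V\to A$, $s:=\pi_V\circ\varphi|_V\colon V\to V$, one has $s$ bijective since $\varphi$ is, and the multiplicativity of $\varphi$ translates into exactly the systems of relations (M1)--(M6) (and its (a1)-analogue). Therefore $\varphi=\varphi_{r,s}$ in the sense of the equivalence lemmas, and $\Omega\equiv\Omega'$ in $\mathcal{HA}(V,A)$; in particular $\Omega$ and $\Omega'$ must belong to the same component of the disjoint union, since the type is detected by whether the induced $A$-actions $\ppr,\ppl$ on $V$ vanish and whether $\theta$ vanishes (equivalently, whether $A$ or $V$ ends up being a subalgebra of the image).

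The main obstacle I anticipate is the bookkeeping in the injectivity step: one must verify carefully that an isomorphism $\varphi$ between a type (a1) product and a type (a2) product forces both to reduce to the common overlap (trivial $\ppr,\ppl,\theta,\sigma$), so that the disjoint union structure on $\mathcal{HA}(V,A)$ really is compatible with the equivalence $\equiv$. All the remaining computations — verifying the cocycle and matched-pair conditions translate into (M1)--(M6) and checking each compatibility axiom in the two extending datum definitions — are routine expansions of $\varphi((a+x)\cdot(b+y)) = \varphi(a+x)\cdot'\varphi(b+y)$ on generators, identical in spirit to the computations used in the proofs of the equivalence lemmas preceding the theorem.
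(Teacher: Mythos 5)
Your proposal follows essentially the same route as the paper, which derives the theorem directly by assembling the preceding lemmas: the reconstruction lemmas (Lemma \ref{lem:33-1} and its type-(a2) analogue from \cite{Z5}) give surjectivity, and the $(r,s)$-classification lemmas give well-definedness and injectivity via the equivalence $\equiv$. The one point you raise that the paper does not explicitly address — ruling out a stabilizing isomorphism between a type-(a1) and a type-(a2) product outside the common overlap — is a genuine subtlety, and your observation that a stabilizing isomorphism preserves whether $A$ is a subalgebra is the right way to handle it.
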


\subsection{Extending structures for   alternative  coalgebras}

Next we consider the  alternative  coalgebra structures on $E=A^{P}\# {}^{Q}V$.

There are two cases for $(A, \Delta_A)$ to be an  alternative  coalgebra. The first case is  when $Q=0$,  then we obtain the following type (c1) unified coproduct for  alternative  coalgebras.
\begin{lemma}\label{cor02co}
Let $({A}, \Delta_A)$ be an  alternative  coalgebra and $V$ a vector space.
An  extending datum  of ${A}$ by $V$ of  type (c1) is  $\Omega^c({A}, V)=(\phi, {\psi}, \rho, \gamma, P, \Delta_V)$ with  linear maps
\begin{eqnarray*}
&&\phi: A \to V \otimes A, \quad  \psi: A \to A\otimes V,\\
&&\rho: V  \to A\otimes V, \quad  \gamma: V \to V \otimes A,\\
&& {P}: A\rightarrow {V}\otimes {V}, \quad\Delta_V: V\rightarrow V\otimes V.
\end{eqnarray*}
 Denote by $A^{P}\# {}^{} V$ the vector space $E={A}\oplus V$ with the linear map
$\Delta_E: E\rightarrow E\otimes E$ given by
$$\Delta_{E}(a)=(\Delta_{A}+\phi+\psi+P)(a),\quad \Delta_{E}(x)=(\Delta_{V}+\rho+\gamma)(x), $$
that is
$$\Delta_{E}(a)= a\li \ot a\lii+ a\moi \ot a\mo+a\mo\ot a\mi+a\ppi\ot a\pii,$$
$$\Delta_{E}(x)= x\li \ot x\lii+ x\boi \ot x\boo+x\boo \ot x\bi.$$
Then $A^{P}\# {}^{} V$  is an  alternative  coalgebra with the comultiplication given above if and only if the following compatibility conditions hold:
\begin{enumerate}

\item[(C1)] $\phi(a\li)\ot a\lii+\gamma(a\loi)\ot a\loo-a\loi\ot \Delta_{A}(a\loo)\\
=-\tau_{12}\big(\psi(a\li)\ot a\lii+\rho(a\loi)\ot a\loo-a\li\ot\phi(a\lii)-a\loo\ot\gamma(a\lmi)\big)$,

\item[(C2)] $P(a\li)\ot a\lii+\Delta_{V}(a\loi)\ot a\loo-a\loi\ot\phi(a\loo)-a\ppi\ot\gamma(a\pii)\\
=-\tau_{12}\big(P(a\li)\ot a\lii+\Delta_{V}(a\loi)\ot a\loo-a\loi\ot\phi(a\loo)-a\ppi\ot\gamma(a\pii)\big)$,

\item[(C3)] $\Delta_{A}(a\loo)\ot a\lmi-a\li\ot\psi(a\lii)-a\loo\ot\rho(a\lmi)\\
=-\tau_{12}\big(\Delta_{A}(a\loo)\ot a\lmi-a\li\ot\psi(a\lii)-a\loo\ot\rho(a\lmi)\big)$,

\item[(C4)] $\psi(a\loo)\ot a\lmi+\rho(a\ppi)\ot a\pii-a\li\ot P(a\lii)-a\loo\ot\Delta_{V}(a\lmi)\\
=-\tau_{12}\big(\phi(a\loo)\ot a\lmi+\gamma(a\ppi)\ot a\pii-a\loi\ot\psi(a\loo)-a\ppi\ot\rho(a\pii)\big)$,

\item[(C5)] $\gamma(x\boo)\ot x\bi-x\boo\ot\Delta_{A}(x\bi)=-\tau_{12}\big(\rho(x\boo)\ot x\bi-x\boi\ot\gamma(x\boo)\big)$,

\item[(C6)] $\Delta_{V}(x\boo)\ot x\bi-x\li\ot \gamma(x\lii)-x\boo\ot\phi(x\bi)\\
=-\tau_{12}\big(\Delta_{V}(x\boo)\ot x\bi-x\li\ot \gamma(x\lii)-x\boo\ot\phi(x\bi)\big)$,

\item[(C7)] $\Delta_{A}(x\boi)\ot x\boo-x\boi\ot \rho(x\boo)=-\tau_{12}\big(\Delta_{A}(x\boi)\ot x\boo-x\boi\ot \rho(x\boo)\big)$,

\item[(C8)] $\rho(x\li)\ot x\lii+\psi(x\boi)\ot x\boo-x\boi\ot\Delta_{V}(x\boo)\\
=-\tau_{12}\big(\gamma(x\li)\ot x\lii+\phi(x\boi)\ot x\boo-x\li\ot\rho(x\lii)-x\boo\ot\psi(x\bi)\big)$,

\item[(C9)] $\phi(a\li)\ot a\lii+\gamma(a\loi)\ot a\loo-a\loi\ot \Delta_{A}(a\loo)\\
=-\tau_{23}\big(\phi(a\li)\ot a\lii+\gamma(a\loi)\ot a\loo-a\loi\ot \Delta_{A}(a\loo)\big)$,

\item[(C10)] $P(a\li)\ot a\lii+\Delta_{V}(a\loi)\ot a\loo-a\loi\ot\phi(a\loo)-a\ppi\ot\gamma(a\pii)\\
=-\tau_{23}\big(\phi(a\loo)\ot a\lmi+\gamma(a\ppi)\ot a\pii-a\loi\ot\psi(a\loo)-a\ppi\ot\rho(a\pii)\big)$,

\item[(C11)] $\Delta_{A}(a\loo)\ot a\lmi-a\li\ot\psi(a\lii)-a\loo\ot\rho(a\lmi)\\
=-\tau_{23}\big(\psi(a\li)\ot a\lii+\rho(a\loi)\ot a\loo-a\li\ot\phi(a\lii)-a\loo\ot\gamma(a\lmi)\big)$,

\item[(C12)] $\psi(a\loo)\ot a\lmi+\rho(a\ppi)\ot a\pii-a\li\ot P(a\lii)-a\loo\ot\Delta_{V}(a\lmi)\\
=-\tau_{23}\big(\psi(a\loo)\ot a\lmi+\rho(a\ppi)\ot a\pii-a\li\ot P(a\lii)-a\loo\ot\Delta_{V}(a\lmi)\big)$,

\item[(C13)] $\gamma(x\boo)\ot x\bi-x\boo\ot\Delta_{A}(x\bi)=-\tau_{23}\big(\gamma(x\boo)\ot x\bi-x\boo\ot\Delta_{A}(x\bi)\big)$,

\item[(C14)] $\Delta_{V}(x\boo)\ot x\bi-x\li\ot \gamma(x\lii)-x\boo\ot\phi(x\bi)\\
=-\tau_{23}\big(\phi(x\boi)\ot x\boo+\gamma(x\li)\ot x\lii-x\li\ot\rho(x\lii)-x\boo\ot\psi(x\bi)\big)$,

\item[(C15)] $\Delta_{A}(x\boi)\ot x\boo-x\boi\ot \rho(x\boo)=-\tau_{23}\big(\rho(x\boo)\ot x\bi-x\boi\ot\gamma(x\boo)\big)$,

\item[(C16)] $\rho(x\li)\ot x\lii+\psi(x\boi)\ot x\boo-x\boi\ot\Delta_{V}(x\boo)\\
=-\tau_{23}\big(\rho(x\li)\ot x\lii+\psi(x\boi)\ot x\boo-x\boi\ot\Delta_{V}(x\boo)\big)$,

\item[(C17)]  $\Delta_V(a\ppi)\ot a\pii+P(a\lmoo)\ot a\mi-a\lmoi\ot P(a\lmoo)-a\ppi\ot \Delta_V(a\pii)\\
=-\tau_{12}\big(\Delta_V(a\ppi)\ot a\pii+P(a\lmoo)\ot a\mi-a\lmoi\ot P(a\lmoo)-a\ppi\ot \Delta_V(a\pii)\big)$,

\item[(C18)]  $\Delta_V(a\ppi)\ot a\pii+P(a\lmoo)\ot a\mi-a\lmoi\ot P(a\lmoo)-a\ppi\ot \Delta_V(a\pii)\\
=-\tau_{23}\big(\Delta_V(a\ppi)\ot a\pii+P(a\lmoo)\ot a\mi-a\lmoi\ot P(a\lmoo)-a\ppi\ot \Delta_V(a\pii)\big)$,

\item[(C19)] $\Delta_V(x\li)\ot x\lii+ P(x\boi) \ot x\boo-x_1\ot \Delta_V(x_2)-x\boo\ot P(x\bi)\\
=-\tau_{12}\big(\Delta_V(x\li)\ot x\lii+ P(x\boi) \ot x\boo-x_1\ot \Delta_V(x_2)-x\boo\ot P(x\bi)\big)$,

\item[(C20)] $\Delta_V(x\li)\ot x\lii+ P(x\boi) \ot x\boo-x_1\ot \Delta_V(x_2)-x\boo\ot P(x\bi)\\
=-\tau_{23}\big(\Delta_V(x\li)\ot x\lii+ P(x\boi) \ot x\boo-x_1\ot \Delta_V(x_2)-x\boo\ot P(x\bi)\big)$.
\end{enumerate}
\end{lemma}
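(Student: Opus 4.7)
The plan is to verify directly the two defining identities of an alternative coalgebra,
\[
(\Delta_E\otimes\id)\Delta_E(u)-(\id\otimes\Delta_E)\Delta_E(u)=-\tau_{ij}\bigl((\Delta_E\otimes\id)\Delta_E(u)-(\id\otimes\Delta_E)\Delta_E(u)\bigr),
\]
for $(i,j)\in\{(1,2),(2,3)\}$, applied separately to $u=a\in A$ and $u=x\in V$. After expanding $\Delta_E$ as the sum of $\Delta_A,\phi,\psi,P$ (resp.\ $\Delta_V,\rho,\gamma$), both sides of each identity become elements of $E^{\otimes 3}=\bigoplus_{s\in\{A,V\}^3}s_1\otimes s_2\otimes s_3$, so the identities split into one equation for each orbit class of $\tau_{ij}$ acting on $\{A,V\}^3$.

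For $\tau_{12}$ applied to $u=a$, the $A\otimes A\otimes A$ component is automatic because $(A,\Delta_A)$ is already an alternative coalgebra. The remaining $\tau_{12}$-fixed classes $A\otimes A\otimes V$ and $V\otimes V\otimes A$ yield (C3) and (C2); the $\tau_{12}$-swapped pair $V\otimes A\otimes A\leftrightarrow A\otimes V\otimes A$ yields (C1); the swapped pair $V\otimes A\otimes V\leftrightarrow A\otimes V\otimes V$ yields (C4); and the fixed class $V\otimes V\otimes V$ yields (C17), which involves only $P$ and $\Delta_V$. Applying $\tau_{12}$ to $u=x$ similarly produces (C5)--(C8) together with (C19), and applying $\tau_{23}$ to $u=a$ and to $u=x$ produces (C9)--(C12) with (C18), and (C13)--(C16) with (C20), respectively.

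All of this is a direct specialization of Lemma~\ref{lem2} obtained by setting $Q=0$ and renaming $H$ as $V$; under this reduction (CCP1)--(CCP16) collapse to (C1)--(C16), and the cycle conditions (CC5)--(CC6), (CC13)--(CC14) on the pair $(P,\Delta_V)$ collapse to (C17)--(C20). Conversely, the same computation read backwards shows that if (C1)--(C20) hold, both alternative coalgebra identities for $\Delta_E$ are satisfied.

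The sole obstacle is bookkeeping: one must expand all eighteen trilinear terms arising from iterating $\Delta_E$ and sort each into the correct one of the eight component subspaces of $E^{\otimes 3}$ before carrying out the $\tau_{ij}$-comparison. No new conceptual ingredient beyond the template of Lemma~\ref{lem2} is required, and for brevity the explicit expansion can be suppressed with a remark pointing to that earlier computation.
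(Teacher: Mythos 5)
Your proposal is correct and takes essentially the same route as the paper: the paper leaves this lemma without an explicit proof, relying on the expansion-and-compare computation carried out for Lemma~\ref{lem2}, of which this statement is precisely the $Q=0$ specialization, with the cycle conditions on the pair $(P,\Delta_V)$ written out explicitly as (C17)--(C20). Your component-by-component bookkeeping in $E^{\otimes 3}$ under $\tau_{12}$ and $\tau_{23}$ (fixed classes versus swapped pairs, with the $A\otimes A\otimes A$ component absorbed by the hypothesis on $\Delta_A$) assigns each of (C1)--(C20) to the correct subspace.
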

Denote the set of all  coalgebraic extending datum of ${A}$ by $V$ of type (c1) by $\mathcal{C}^{(1)}({A},V)$.

\begin{lemma}\label{lem:33-3}
Let $({A}, \Delta_A)$ be an  alternative  coalgebra and $E$ a vector space containing ${A}$ as a subspace. Suppose that there is an   alternative   coalgebra structure $(E, \Delta_E)$ on $E$ such that  $p: E\to {A}$ is an   alternative   coalgebra homomorphism. Then there exists an alternative coalgebraic extending system $\Omega^c({A}, V)$ of $({A}, \Delta_A)$ by $V$ such that $(E, \Delta_E)\cong A^{P}\# {}^{} V$.
\end{lemma}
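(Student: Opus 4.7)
The plan is to mirror the construction in Lemma \ref{lem:33-1}, dualizing from the algebra setting to the coalgebra setting. Since $p: E \to A$ is a linear surjection satisfying $p|_A = \id_A$, we set $V := \ker(p)$, which is a complement of $A$ in $E$, giving a direct sum decomposition $E = A \oplus V$. This yields the block decomposition
\begin{equation*}
E \otimes E = (A\otimes A) \oplus (V\otimes A) \oplus (A\otimes V) \oplus (V\otimes V),
\end{equation*}
with corresponding projections $\pi_{AA}, \pi_{VA}, \pi_{AV}, \pi_{VV}$.

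First I would extract the six structure maps. For $a \in A$, write
\begin{equation*}
\Delta_E(a) = \pi_{AA}\Delta_E(a) + \pi_{VA}\Delta_E(a) + \pi_{AV}\Delta_E(a) + \pi_{VV}\Delta_E(a),
\end{equation*}
and define $\phi(a) := \pi_{VA}\Delta_E(a)$, $\psi(a) := \pi_{AV}\Delta_E(a)$, $P(a) := \pi_{VV}\Delta_E(a)$. The hypothesis that $p$ is a coalgebra homomorphism gives $(p\otimes p)\Delta_E(a) = \Delta_A(p(a)) = \Delta_A(a)$, which forces $\pi_{AA}\Delta_E(a) = \Delta_A(a)$. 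For $x \in V$, similarly set $\rho(x) := \pi_{AV}\Delta_E(x)$, $\gamma(x) := \pi_{VA}\Delta_E(x)$, $\Delta_V(x) := \pi_{VV}\Delta_E(x)$; the homomorphism property this time gives $(p\otimes p)\Delta_E(x) = \Delta_A(p(x)) = 0$, so $\pi_{AA}\Delta_E(x) = 0$ and no $A\otimes A$-component arises. This matches exactly the type (c1) comultiplication formula $\Delta_{E}(a)=(\Delta_{A}+\phi+\psi+P)(a)$, $\Delta_{E}(x)=(\Delta_{V}+\rho+\gamma)(x)$.

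Next I would verify that the resulting datum $\Omega^c(A,V) = (\phi,\psi,\rho,\gamma,P,\Delta_V)$ satisfies the compatibility conditions (C1)--(C20) of Lemma \ref{cor02co}. Since $(E,\Delta_E)$ is by assumption an alternative coalgebra, it satisfies the two coassociativity-type identities \eqref{eq:LB4} and \eqref{eq:LB5}. Applying these identities to $a \in A$ and to $x \in V$ separately, and then projecting each resulting ternary tensor onto the eight components of $E^{\otimes 3}$, splits each axiom into precisely the list of scalar relations (C1)--(C20): the relations on $a$-input yield (C1)--(C4), (C9)--(C12), (C17), (C18), while the relations on $x$-input yield (C5)--(C8), (C13)--(C16), (C19), (C20). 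This is a routine but tedious bookkeeping exercise; each condition is a single component of one of the two alternative coalgebra axioms for $E$ restricted to the corresponding summand.

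Finally, I would define $\varphi: A^{P}\#^{} V \to E$ by $\varphi(a+x) = a + x$; this is trivially a linear isomorphism by the direct sum decomposition $E = A \oplus V$. To see that it intertwines comultiplications, note that by construction $\Delta_E(a)$ is the sum of its four components and $\Delta_E(x)$ is the sum of its three components, which under $\varphi \otimes \varphi$ (the identity) is precisely $\Delta_{A^P\#^{} V}(a+x)$. The main obstacle in the formal write-up is not conceptual but notational: keeping track of which of the eight projections on $E^{\otimes 3}$ each term lands in when unpacking \eqref{eq:LB4} and \eqref{eq:LB5}, and checking that one obtains exactly the twenty compatibility relations with no spurious or missing terms.
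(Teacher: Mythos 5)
Your proposal is correct and follows essentially the same route as the paper: both define $V=\ker(p)$, obtain the structure maps $\phi,\psi,\rho,\gamma,P,\Delta_V$ as the block components $(\pi\otimes p)\Delta_E$, $(p\otimes\pi)\Delta_E$, etc., of the given comultiplication, use the homomorphism property of $p$ to force the $A\otimes A$-component of $\Delta_E(a)$ to be $\Delta_A(a)$ and that of $\Delta_E(x)$ to vanish, and take $\varphi(a+x)=a+x$ as the isomorphism. The paper leaves the verification of (C1)--(C20) as "one checks", which is exactly the component-wise bookkeeping of the two coalgebra axioms that you describe.
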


\begin{proof}
Let $p: E\to {A}$ and $\pi: E\to V$ be the projection map and $V=\ker({p})$.
Then the extending datum of $({A}, \Delta_A)$ by $V$ is defined as follows:
\begin{eqnarray*}
&&{\phi}: A\rightarrow V\ot {A},~~~~{\phi}(a)=(\pi\otimes {p})\Delta_E(a),\\
&&{\psi}: A\rightarrow A\ot V,~~~~{\psi}(a)=({p}\otimes \pi)\Delta_E(a),\\
&&{\rho}: V\rightarrow A\ot V,~~~~{\rho}(x)=({p}\otimes \pi)\Delta_E(x),\\
&&{\gamma}: V\rightarrow V\ot {A},~~~~{\gamma}(x)=(\pi\otimes {p})\Delta_E(x),\\
&&\Delta_V: V\rightarrow V\otimes V,~~~~\Delta_V(x)=(\pi\otimes \pi)\Delta_E(x),\\
&&Q: V\rightarrow {A}\otimes {A},~~~~Q(x)=({p}\otimes {p})\Delta_E(x),\\
&&P: A\rightarrow {V}\otimes {V},~~~~P(a)=({\pi}\otimes {\pi})\Delta_E(a).
\end{eqnarray*}
One check that  $\varphi: A^{P}\# {}^{} V\to E$ given by $\varphi(a+x)=a+x$ for all $a\in A, x\in V$ is an   alternative   coalgebra isomorphism.
\end{proof}

\begin{lemma}\label{lem-c1}
Let $\Omega^{(1)}({A}, V)=(\phi, {\psi}, \rho, \gamma, P, \Delta_V)$ and ${\Omega'^{(1)}}({A}, V)=(\phi', {\psi'}, \rho', \gamma',  P', \Delta'_V)$ be two  alternative   coalgebraic extending datums of $({A}, \Delta_A)$ by $V$. Then there exists a bijection between the set of   alternative    coalgebra homomorphisms $\varphi: A^{P}\# {}^{} V\rightarrow A^{P'}\# {}^{} V$ whose restriction on ${A}$ is the identity map and the set of pairs $(r, s)$, where $r: V\rightarrow {A}$ and $s:V\rightarrow V$ are two linear maps satisfying
\begin{eqnarray}
\label{comorph11}&&P'(a)=s(a\ppi)\ot s(a\pii),\\
\label{comorph121}&&\phi'(a)={s}(a\lmoi)\ot a\lmo+s(a\ppi)\ot r(a\pii),\\
\label{comorph122}&&\psi'(a)=a\lmo\ot {s}(a\mi) +r(a\ppi)\ot s(a\pii),\\
\label{comorph13}&&\Delta'_A(a)=\Delta_A(a)+{r}(a\lmoi)\ot a\lmo+a\lmo\ot {r}(a\mi)+r(a\ppi)\ot r(a\pii),\\
\label{comorph21}&&\Delta_V'({s}(x))=({s}\otimes {s})\Delta_V(x),\\
\label{comorph221}&&{\rho}'({s}(x))+\psi'(r(x))=r(x\li)\ot s(x\lii)+x\boi\ot s(x\boo),\\
\label{comorph222}&&{\gamma}'({s}(x))+\phi'(r(x))=s(x\li)\ot r(x\lii)+s(x\boo)\ot x\bi,\\
\label{comorph23}&&\Delta'_A({r}(x))+P'(r(x))=r(x\li)\ot r(x\lii)+x\boi\ot r(x\boo)+r(x\boo)\ot x\bi.
\end{eqnarray}
Under the above bijection the   alternative   coalgebra homomorphism $\varphi=\varphi_{r, s}: A^{P}\# {}^{} V\rightarrow A^{P'}\# {}^{} V$ to $(r, s)$ is given by $\varphi(a+x)=(a+r(x))+s(x)$ for all $a\in {A}$ and $x\in V$. Moreover, $\varphi=\varphi_{r, s}$ is an isomorphism if and only if $s: V\rightarrow V$ is a linear isomorphism.
\end{lemma}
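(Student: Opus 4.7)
\medskip

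\noindent\textbf{Proof proposal.} The plan is to parametrize the class of candidate maps first, then translate the coalgebra homomorphism axiom into a system of equations, and finally sort the equations by the $A$/$V$--component in each tensor slot.

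First I would observe that any linear map $\varphi:E\to E$ whose restriction to $A$ is $\id_A$ is uniquely determined by a pair $(r,s)$ with $r:V\to A$ and $s:V\to V$, via $\varphi(a+x)=a+r(x)+s(x)$. Indeed, decomposing $\varphi(x)\in E=A\oplus V$ gives exactly the $A$-component $r(x)$ and the $V$-component $s(x)$. Using the comultiplication $\Delta_E$ given by Lemma \ref{lem2} (for the coproducts built from $\phi,\psi,\rho,\gamma,P,\Delta_V$ and similarly for the primed data), the statement that $\varphi$ is a homomorphism of alternative coalgebras is
\[
(\varphi\otimes\varphi)\circ\Delta_{E}=\Delta'_{E}\circ\varphi.
\]
I would verify that this single tensor identity, evaluated separately on an arbitrary $a\in A$ and an arbitrary $x\in V$, splits into precisely eight equations once one projects onto the four summands $A\otimes A,\ V\otimes A,\ A\otimes V,\ V\otimes V$ of $E\otimes E$.

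Next I would expand the two sides. For $a\in A$ the left-hand side is
\[
(\varphi\otimes\varphi)\bigl(a\li\otimes a\lii+a\lmoi\otimes a\lmoo+a\lmoo\otimes a\mi+a\ppi\otimes a\pii\bigr),
\]
and applying $\varphi(v)=r(v)+s(v)$ to each $V$-tensorand while leaving $A$-tensorands fixed, I match against $\Delta'_E(a)=\Delta'_A(a)+\phi'(a)+\psi'(a)+P'(a)$. Sorting terms by which summand of $E\otimes E$ they live in yields exactly \eqref{comorph11}--\eqref{comorph13}. For $x\in V$ the left-hand side is
\[
(\varphi\otimes\varphi)\bigl(x\li\otimes x\lii+x\boi\otimes x\boo+x\boo\otimes x\bi\bigr),
\]
while the right-hand side is
\[
\Delta'_{E}\bigl(r(x)+s(x)\bigr)=\Delta'_{A}(r(x))+\phi'(r(x))+\psi'(r(x))+P'(r(x))+\Delta'_{V}(s(x))+\rho'(s(x))+\gamma'(s(x)).
\]
Again I would sort by bidegree in $A\oplus V$: the $V\otimes V$ components give \eqref{comorph21}; the mixed components $A\otimes V$ and $V\otimes A$ combine to \eqref{comorph221} and \eqref{comorph222}; and the $A\otimes A$ component gives \eqref{comorph23}. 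Conversely, if $(r,s)$ satisfies the listed eight relations, reassembling the four projected identities recovers the homomorphism equation, so the correspondence is a bijection.

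Finally, for the isomorphism criterion, since $\varphi(a+x)=(a+r(x))+s(x)$ is an endomorphism of $A\oplus V$ that is the identity on the first summand, its matrix in block form is $\bigl(\begin{smallmatrix}\id_A & r\\ 0 & s\end{smallmatrix}\bigr)$, which is invertible iff $s$ is invertible. The main bookkeeping obstacle is cleanly separating the four bidegree components on each side; once the projection onto each summand is performed consistently, the eight equations fall out mechanically from the definitions of $\Delta_E$ and $\Delta'_E$ given in Lemma \ref{lem2}.
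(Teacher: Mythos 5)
Your proposal is correct and follows essentially the same route as the paper: parametrize $\varphi$ by $(r,s)$, expand $(\varphi\otimes\varphi)\Delta_E=\Delta'_E\varphi$ separately on $a\in A$ and $x\in V$, and read off the eight conditions, with the block-triangular form giving the isomorphism criterion. The only caveat is that a strict bidegree split would place $P'(r(x))$ in the $V\otimes V$ equation alongside $\Delta'_V(s(x))$ rather than with $\Delta'_A(r(x))$ in \eqref{comorph23}; this grouping quirk is inherited from the statement itself and is present in the paper's own proof as well.
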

\begin{proof}
Let $\varphi: A^{P}\# {}^{} V\rightarrow A^{P'}\# {}^{} V$  be an    alternative  coalgebra homomorphism  whose restriction on ${A}$ is the identity map. Then $\varphi$ is determined by two linear maps $r: V\rightarrow {A}$ and $s: V\rightarrow V$ such that
$\varphi(a+x)=(a+r(x))+s(x)$ for all $a\in {A}$ and $x\in V$. We will prove that
$\varphi$ is a homomorphism of   alternative   coalgebras if and only if the above conditions hold.
First   it is easy to see that  $\Delta'_E\varphi(a)=(\varphi\otimes \varphi)\Delta_E(a)$ for all $a\in {A}$.
\begin{eqnarray*}
\Delta'_E\varphi(a)&=&\Delta'_E(a)=\Delta'_A(a)+\phi'(a)+\psi'(a)+P'(a),
\end{eqnarray*}
and
\begin{eqnarray*}
&&(\varphi\otimes \varphi)\Delta_E(a)\\
&=&(\varphi\otimes \varphi)\left(\Delta_A(a)+\phi(a)+\psi(a)+P(a)\right)\\
&=&\Delta_A(a)+{r}(a\lmoi)\ot a\lmo+{s}(a\lmoi)\ot a\lmo+a\lmo\ot {r}(a\mi) +a\lmo\ot {s}(a\mi)\\
&&+r(a\ppi)\ot r(a\pii)+r(a\ppi)\ot s(a\pii)+s(a\ppi)\ot r(a\pii)+s(a\ppi)\ot s(a\pii).
\end{eqnarray*}
Thus we obtain that $\Delta'_E\varphi(a)=(\varphi\otimes \varphi)\Delta_E(a)$  if and only if the conditions \eqref{comorph11}, \eqref{comorph121}, \eqref{comorph122} and \eqref{comorph13} hold.
Then we consider that $\Delta'_E\varphi(x)=(\varphi\otimes \varphi)\Delta_E(x)$ for all $x\in V$.
\begin{eqnarray*}
\Delta'_E\varphi(x)&=&\Delta'_E({r}(x)+{s}(x))=\Delta'_E({r}(x))+\Delta'_E({s}(x))\\
&=&\Delta'_A({r}(x))+\phi'(r(x))+\psi'(r(x))+P(r(x))+\Delta'_V({s}(x))+{\rho}'({s}(x))+{\gamma}'({s}(x))),
\end{eqnarray*}
and
\begin{eqnarray*}
&&(\varphi\otimes \varphi)\Delta_E(x)\\
&=&(\varphi\otimes \varphi)(\Delta_V(x)+{\rho}(x)+{\gamma}(x))\\
&=&(\varphi\otimes \varphi)(x\li\ot x\lii+x\boi\ot x\boo+x\boo\ot x\bi)\\
&=&r(x\li)\ot r(x\lii)+r(x\li)\ot s(x\lii)+s(x\li)\ot r(x\lii)+s(x\li)\ot s(x\lii)\\
&&+x\boi\ot r(x\boo)+x\boi\ot s(x\boo)+r(x\boo)\ot x\bi+s(x\boo)\ot x\bi.
\end{eqnarray*}
Thus we obtain that $\Delta'_E\varphi(x)=(\varphi\otimes \varphi)\Delta_E(x)$ if and only if the conditions  \eqref{comorph21},  \eqref{comorph221},  \eqref{comorph222} and \eqref{comorph23}  hold. By definition, we obtain that $\varphi=\varphi_{r, s}$ is an isomorphism if and only if $s: V\rightarrow V$ is a linear isomorphism.
\end{proof}

The second case is $\phi=0$ and  $\psi=0$, we obtain  the following type (c2) unified coproduct for  coalgebras.
\begin{lemma}\label{cor02}
Let $({A}, \Delta_A)$ be an   alternative   coalgebra and $V$ a vector space.
An  extending datum  of $({A}, \Delta_A)$ by $V$ of type (c2)  is  $\Omega^{(2)}({A},V)=(\rho, \gamma, {Q}, \Delta_V)$ with  linear maps
\begin{eqnarray*}
&&\rho: V  \to A\otimes V, \quad  \gamma: V \to V \otimes A, \quad \Delta_{V}: V \to V\otimes V, \quad Q: V \to A\otimes A.
\end{eqnarray*}
 Denote by $A^{}\# {}^{Q} V$ the vector space $E={A}\oplus V$ with the comultiplication
$\Delta_E: E\rightarrow E\otimes E$ given by
\begin{eqnarray}
\Delta_{E}(a)&=&\Delta_{A}(a), \quad \Delta_{E}(x)=(\Delta_{V}+\rho+\gamma+Q)(x), \\
\Delta_{E}(a)&=& a\li \ot a\lii, \quad \Delta_{E}(x)= x\li \ot x\lii+ x\boi \ot x\boo+x\boo \ot x\bi+x\qi\ot x\qii.
\end{eqnarray}
Then $A^{}\# {}^{Q} V$  is an  alternative    coalgebra with the comultiplication given above if and only if the following compatibility conditions hold:

\begin{enumerate}
\item[(D1)] $\gamma(x\boo)\ot x\bi-x\li\ot Q(x\lii)-x\boo\ot\Delta_{A}(x\bi)=-\tau_{12}\big(\rho(x\boo)\ot x\bi-x\boi\ot\gamma(x\boo)\big)$,

\item[(D2)] $\Delta_{V}(x\boo)\ot x\bi-x\li\ot \gamma(x\lii)=-\tau_{12}\big(\Delta_{V}(x\boo)\ot x\bi-x\li\ot \gamma(x\lii)\big)$,

\item[(D3)] $Q(x\li)\ot x\lii+\Delta_{A}(x\boi)\ot x\boo-x\boi\ot \rho(x\boo)\\
=-\tau_{12}\big(Q(x\li)\ot x\lii+\Delta_{A}(x\boi)\ot x\boo-x\boi\ot \rho(x\boo)\big)$,

\item[(D4)] $\rho(x\li)\ot x\lii-x\boi\ot\Delta_{V}(x\boo)=-\tau_{12}\big(\gamma(x\li)\ot x\lii-x\li\ot\rho(x\lii)\big)$,

\item[(D5)] $\gamma(x\boo)\ot x\bi-x\li\ot Q(x\lii)-x\boo\ot\Delta_{A}(x\bi)\\
=-\tau_{23}\big(\gamma(x\boo)\ot x\bi-x\li\ot Q(x\lii)-x\boo\ot\Delta_{A}(x\bi)\big)$,

\item[(D6)] $\Delta_{V}(x\boo)\ot x\bi-x\li\ot \gamma(x\lii)=-\tau_{23}\big(\gamma(x\li)\ot x\lii-x\li\ot\rho(x\lii)\big)$,

\item[(D7)] $Q(x\li)\ot x\lii+\Delta_{A}(x\boi)\ot x\boo-x\boi\ot \rho(x\boo)=-\tau_{23}\big(\rho(x\boo)\ot x\bi-x\boi\ot\gamma(x\boo)\big)$,

\item[(D8)] $\rho(x\li)\ot x\lii-x\boi\ot\Delta_{V}(x\boo)=-\tau_{23}\big(\rho(x\li)\ot x\lii-x\boi\ot\Delta_{V}(x\boo)\big)$,

\item[(D9)]  $ \Delta_A(x\qi)\ot x\qii+Q(x\boo)\ot x\bi-x\qi\ot \Delta_A(x\qii)-x\boi\ot Q(x\boo)\\
=-\tau_{12}\big(\Delta_A(x\qi)\ot x\qii+Q(x\boo)\ot x\bi-x\qi\ot \Delta_A(x\qii)-x\boi\ot Q(x\boo)\big)$,

\item[(D10)]  $ \Delta_A(x\qi)\ot x\qii+Q(x\boo)\ot x\bi-x\qi\ot \Delta_A(x\qii)-x\boi\ot Q(x\boo)\\
=-\tau_{23}\big(\Delta_A(x\qi)\ot x\qii+Q(x\boo)\ot x\bi-x\qi\ot \Delta_A(x\qii)-x\boi\ot Q(x\boo)\big)$,

\item[(D11)] $\Delta_V(x\li)\ot x\lii-x_1\ot \Delta_V(x_2)=-\tau_{12}\big(\Delta_V(x\li)\ot x\lii-x_1\ot \Delta_V(x_2)\big)$,

\item[(D12)] $\Delta_V(x\li)\ot x\lii-x_1\ot \Delta_V(x_2)=-\tau_{23}\big(\Delta_V(x\li)\ot x\lii-x_1\ot \Delta_V(x_2)\big)$.
\end{enumerate}
\end{lemma}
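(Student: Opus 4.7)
The plan is to verify directly the two alternative coalgebra axioms for $\Delta_E$ on $E = A \oplus V$, in complete analogy with the proof of Lemma \ref{lem2} but specialized to the case $\phi = \psi = P = 0$. I would split the verification into the two cases $w = a \in A$ and $w = x \in V$.

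For $w = a \in A$, one has $\Delta_E(a) = \Delta_A(a) \in A \otimes A$, so both iterates $(\Delta_E \otimes \id)\Delta_E(a)$ and $(\id \otimes \Delta_E)\Delta_E(a)$ reduce to their counterparts formed with $\Delta_A$ alone, living entirely in $A^{\otimes 3}$. Hence both alternative coalgebra identities for $a$ are immediate consequences of the hypothesis that $(A, \Delta_A)$ is itself an alternative coalgebra, and they contribute no new conditions.

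For $w = x \in V$, I would expand $\Delta_E(x) = \Delta_V(x) + \rho(x) + \gamma(x) + Q(x)$, apply $(\Delta_E \otimes \id)$ and $(\id \otimes \Delta_E)$ to each of the four summands, and regroup the resulting triple tensors according to the eight direct summands of
$$E^{\otimes 3} = \bigoplus_{\varepsilon_1,\varepsilon_2,\varepsilon_3 \in \{A,V\}} \varepsilon_1 \otimes \varepsilon_2 \otimes \varepsilon_3.$$
Since these summands are linearly independent, each of the two coassociativity-type identities decomposes componentwise. Matching coefficients in the appropriate summands and reading off the $\tau_{12}$ identity produces conditions (D1)--(D4) and (D11), while the $\tau_{23}$ identity produces (D5)--(D8) and (D12); the remaining conditions (D9) and (D10) arise from the summands in which the $Q$-term interacts with itself or with $\rho,\gamma$ through the outer comultiplication step.

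The principal obstacle is purely combinatorial bookkeeping: with $\Delta_E(x)$ having four summands, each iterated comultiplication produces up to sixteen tensor terms whose placements in $E^{\otimes 3}$ must be tracked carefully. I would follow the same layout as the proof of Lemma \ref{lem2}, simply deleting the terms that vanish under $\phi = \psi = P = 0$; under this specialization the sixteen potential matching conditions (CCP1)--(CCP16) collapse to the twelve listed here, the remaining four projections reducing either to trivial $0 = 0$ identities or to consequences already encoded in the alternative coalgebra structure of $(A, \Delta_A)$. The converse direction is then obtained by reassembling the projected conditions into the two global identities for $\Delta_E$.
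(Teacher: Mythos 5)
Your proposal is correct and follows essentially the same route as the paper, which treats this lemma as the specialization $\phi=\psi=P=0$ of the direct component-by-component verification carried out for Lemma \ref{lem2}. One minor bookkeeping remark: only (D1)--(D8) arise as the surviving cases of (CCP1)--(CCP16), while (D9)--(D10) are the $A\otimes A\otimes A$ components (i.e.\ the cycle conditions (CC7)--(CC8) on $Q$) and (D11)--(D12) are the $V\otimes V\otimes V$ components (coassociativity of $\Delta_V$), which in the general lemma sit among the hypotheses rather than in the CCP list --- but your method of projecting the two identities onto the eight summands of $E^{\otimes 3}$ does produce all twelve conditions correctly.
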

Note that in this case $(V, \Delta_V)$ is an  alternative  coalgebra.

Denote the set of all  alternative  coalgebraic extending datum of ${A}$ by $V$ of type (c2) by $\mathcal{C}^{(2)}({A}, V)$.

Similar to the  alternative   algebra case,  one  show that any   alternative   coalgebra structure on $E$ containing ${A}$ as an   alternative   subcoalgebra is isomorphic to such a unified coproduct.
\begin{lemma}\label{lem:33-4}
Let $({A}, \Delta_A)$ be an    alternative  coalgebra and $E$ a vector space containing ${A}$ as a subspace. Suppose that there is an   alternative   coalgebra structure $(E, \Delta_E)$ on $E$ such that  $({A}, \Delta_A)$ is an   alternative  subcoalgebra of $E$. Then there exists an   alternative   coalgebraic extending system $\Omega^{(2)}({A}, V)$ of $({A}, \Delta_A)$ by $V$ such that $(E, \Delta_E)\cong A^{}\# {}^{Q} V$.
\end{lemma}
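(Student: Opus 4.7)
The plan is to mimic the proof of Lemma \ref{lem:33-3}, but to exploit the stronger hypothesis that $A$ is an alternative subcoalgebra of $E$ (rather than merely that $p$ is a coalgebra homomorphism). The key consequence of this hypothesis is that $\Delta_{E}(a)\in A\otimes A$ for every $a\in A$; equivalently, $\Delta_{E}|_{A}=\Delta_{A}$. Fix any linear projection $p\colon E\to A$ with $p|_{A}=\id_{A}$, set $V:=\ker(p)$, and let $\pi\colon E\to V$ be the complementary projection, so that $E=A\oplus V$ as vector spaces.

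Using the resulting decomposition $E\otimes E=(A\otimes A)\oplus(A\otimes V)\oplus(V\otimes A)\oplus(V\otimes V)$, I would define the four components of the extending datum of type (c2) by
\[
\rho(x):=(p\otimes\pi)\Delta_{E}(x),\quad \gamma(x):=(\pi\otimes p)\Delta_{E}(x),\quad Q(x):=(p\otimes p)\Delta_{E}(x),\quad \Delta_{V}(x):=(\pi\otimes\pi)\Delta_{E}(x),
\]
for $x\in V$. Because $\Delta_{E}(a)\in A\otimes A$ for $a\in A$, the would-be maps $\phi,\psi,P$ attached to $A$ vanish identically, which is precisely the shape of the type (c2) datum $\Omega^{(2)}(A,V)=(\rho,\gamma,Q,\Delta_{V})$.

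Next I would verify compatibility conditions (D1)--(D12) of Lemma \ref{cor02}. The strategy is uniform: apply the two alternative coassociativity identities \eqref{eq:LB4} and \eqref{eq:LB5} satisfied by $\Delta_{E}$ to an element $x\in V$, then project the resulting equality in $E^{\otimes 3}$ onto each of the eight summands $A^{\otimes 3}, A\otimes A\otimes V, \ldots, V^{\otimes 3}$ using the maps $p$ and $\pi$ componentwise. The conditions (D9)--(D10) come out of projecting onto summands that touch the pure $A\otimes A$ block of $\Delta_{E}(x)$ (so they involve $Q$), while (D1)--(D8), (D11)--(D12) arise from projecting onto the other summands; equations (D11)--(D12) express that $(V,\Delta_{V})$ is itself an alternative coalgebra.

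Finally, I would define $\varphi\colon A\#^{Q}V\to E$ by $\varphi(a+x):=a+x$. This is evidently a linear isomorphism whose restriction to $A$ is the identity. To see that it intertwines comultiplications, decompose $\Delta_{E}$ along $A\oplus V$: on $a\in A$ both sides equal $\Delta_{A}(a)$ by the subcoalgebra hypothesis, while on $x\in V$ the definitions of $\rho,\gamma,Q,\Delta_{V}$ were cooked up precisely so that $\Delta_{E}(x)=\Delta_{V}(x)+\rho(x)+\gamma(x)+Q(x)$, which coincides with $\Delta_{E}$ computed in $A\#^{Q}V$ under Lemma \ref{cor02}. The main obstacle is purely bookkeeping: one must keep track of which of the eight tensor components in $E^{\otimes 3}$ each projection of coassociativity yields, and match each projection to the correct labeled identity in (D1)--(D12); no conceptual difficulty beyond the careful case analysis is involved, since the $(\tau_{12},\tau_{23})$-antisymmetries in (D1)--(D12) are exactly the projections of the $(\tau_{12},\tau_{23})$-antisymmetries in \eqref{eq:LB4}--\eqref{eq:LB5}.
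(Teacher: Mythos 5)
Your proposal is correct and follows essentially the same route as the paper: define $\rho,\gamma,Q,\Delta_V$ as the four projections $(p\otimes\pi)\Delta_E$, $(\pi\otimes p)\Delta_E$, $(p\otimes p)\Delta_E$, $(\pi\otimes\pi)\Delta_E$ on $V=\ker(p)$, observe that the subcoalgebra hypothesis forces $\Delta_E|_A=\Delta_A$ so the maps attached to $A$ vanish, and check that $\varphi(a+x)=a+x$ is a coalgebra isomorphism. Your account is in fact more explicit than the paper's (which leaves the verification of (D1)--(D12) to the reader), and your projection-of-coassociativity strategy is the intended one.
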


\begin{proof}
Let $p: E\to {A}$ and $\pi: E\to V$ be the projection map and $V=ker({p})$.
Then the extending datum of $({A}, \Delta_A)$ by $V$ is defined as follows:
\begin{eqnarray*}
&&{\rho}: V\rightarrow A\ot V,~~~~{\phi}(x)=(p\otimes {\pi})\Delta_E(x),\\
&&{\gamma}: V\rightarrow V\ot {A},~~~~{\phi}(x)=(\pi\otimes {p})\Delta_E(x),\\
&&\Delta_V: V\rightarrow V\otimes V,~~~~\Delta_V(x)=(\pi\otimes \pi)\Delta_E(x),\\
&&Q: V\rightarrow {A}\otimes {A},~~~~Q(x)=({p}\otimes {p})\Delta_E(x).
\end{eqnarray*}
One check that  $\varphi: A^{}\# {}^{Q} V\to E$ given by $\varphi(a+x)=a+x$ for all $a\in A, x\in V$ is an   alternative   coalgebra isomorphism.
\end{proof}

\begin{lemma}\label{lem-c2}
Let $\Omega^{(2)}({A}, V)=(\rho, \gamma, {Q}, \Delta_V)$ and ${\Omega'^{(2)}}({A}, V)=(\rho', \gamma', {Q'}, \Delta'_V)$ be two   alternative   coalgebraic extending datums of $({A}, \Delta_A)$ by $V$. Then there exists a bijection between the set of   alternative    coalgebra homomorphisms $\varphi: A \# {}^{Q} V\rightarrow A \# {}^{Q'} V$ whose restriction on ${A}$ is the identity map and the set of pairs $(r, s)$, where $r: V\rightarrow {A}$ and $s:V\rightarrow V$ are two linear maps satisfying
\begin{eqnarray}
\label{comorph1}&&{\rho}'({s}(x))=r(x\li)\ot s(x\lii)+x\boi\ot s(x\boo),\\
\label{comorph2}&&{\gamma}'({s}(x))=s(x\li)\ot r(x\lii)+s(x\boo)\ot x\bi,\\
\label{comorph3}&&\Delta_V'({s}(x))=({s}\otimes {s})\Delta_V(x),\\
\label{comorph4}&&\Delta'_A({r}(x))+{Q'}({s}(x))=r(x\li)\ot r(x\lii)+x\boi\ot r(x\boo)+r(x\boo)\ot x\bi+{Q}(x).
\end{eqnarray}
Under the above bijection the    alternative  coalgebra homomorphism $\varphi=\varphi_{r, s}: A^{ }\# {}^{Q} V\rightarrow A^{ }\# {}^{Q'} V$ to $(r, s)$ is given by $\varphi(a+x)=(a+r(x))+s(x)$ for all $a\in {A}$ and $x\in V$. Moreover, $\varphi=\varphi_{r, s}$ is an isomorphism if and only if $s: V\rightarrow V$ is a linear isomorphism.
\end{lemma}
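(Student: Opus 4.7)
The plan is to mimic the proof of Lemma~\ref{lem-c1}, but adapted to the type (c2) setting where $\phi=0$ and $\psi=0$, so that $\Delta_E$ restricted to $A$ reduces to $\Delta_A$. First I would observe that any linear map $\varphi: E\to E$ whose restriction to $A$ is the identity is completely determined by a pair $(r,s)$ of linear maps $r:V\to A$, $s:V\to V$ via $\varphi(a+x)=(a+r(x))+s(x)$; this is simply the decomposition of $\varphi|_V$ against the direct sum $E=A\oplus V$. Conversely, every such pair $(r,s)$ produces a well-defined linear $\varphi$ stabilizing $A$, so the bijection at the level of linear maps is immediate.

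Next I would impose the coalgebra homomorphism condition $\Delta'_E\circ\varphi=(\varphi\otimes\varphi)\circ\Delta_E$ and test it on the two summands $a\in A$ and $x\in V$ separately. For $a\in A$ the equation reduces to $\Delta'_A(a)=\Delta_A(a)$, which holds trivially since the underlying alternative coalgebra structure on $A$ is fixed in both extending datums. The real content comes from $x\in V$: expanding the left-hand side gives
\[\Delta'_E\varphi(x)=\Delta'_A(r(x))+\Delta'_V(s(x))+\rho'(s(x))+\gamma'(s(x))+Q'(s(x)),\]
while the right-hand side, after applying $\varphi\otimes\varphi$ component-wise to $\Delta_V(x)\in V\otimes V$, $\rho(x)\in A\otimes V$, $\gamma(x)\in V\otimes A$ and $Q(x)\in A\otimes A$, splits into pieces living in each of the four summands of $E\otimes E$.

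The main step is then to match the two sides summand by summand in the decomposition $E\otimes E=(A\otimes A)\oplus(A\otimes V)\oplus(V\otimes A)\oplus(V\otimes V)$. The $V\otimes V$ component gives \eqref{comorph3}; the $A\otimes V$ component gives \eqref{comorph1}; the $V\otimes A$ component gives \eqref{comorph2}; and the $A\otimes A$ component gives \eqref{comorph4}. Since each of $\Delta_V, \rho, \gamma, Q$ has its image in a distinct summand, the matching is forced and there is no overlap to untangle — this is precisely why the type (c2) case is cleaner than the general cycle-cross-coproduct statement.

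Finally, the last assertion that $\varphi_{r,s}$ is an isomorphism iff $s$ is bijective follows exactly as in Lemma~\ref{lem-c1}: the triangular block form of $\varphi$ with respect to the decomposition $E=A\oplus V$ (identity on $A$, $s$ on $V$ modulo the $A$-valued piece $r$) makes $\varphi$ invertible if and only if its diagonal block $s$ is invertible, in which case $\varphi^{-1}(a+x)=(a-r(s^{-1}(x)))+s^{-1}(x)$ and one checks this inverse is again of the same form. I do not anticipate a serious obstacle here; the only bookkeeping to be careful about is the Sweedler-like notation $x\li\ot x\lii$, $x\boi\ot x\boo$, $x\boo\ot x\bi$ for $\Delta_V, \rho, \gamma$ respectively, which must be consistently threaded through the $(\varphi\otimes\varphi)$ computation.
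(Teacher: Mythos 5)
Your plan is correct and follows essentially the same route as the paper: check the homomorphism condition on $A$ (where it is trivial since $\Delta_E|_A=\Delta_A$), expand $\Delta'_E\varphi(x)$ and $(\varphi\otimes\varphi)\Delta_E(x)$ for $x\in V$, and read off \eqref{comorph1}--\eqref{comorph4} by projecting onto the four summands of $E\otimes E$, with the isomorphism criterion reduced to invertibility of $s$. The only slight imprecision is that it is the \emph{left-hand side}, not the images of $\Delta_V,\rho,\gamma,Q$ after applying $\varphi\otimes\varphi$, that splits cleanly across distinct summands (the right-hand side terms do spread across several components), but the component-wise matching you describe is exactly what the paper carries out.
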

\begin{proof} The proof is similar as the proof of Lemma \ref{lem-c1}.
Let $\varphi: A^{ }\# {}^{Q} V\rightarrow A^{}\# {}^{Q'} V$  be an   alternative   coalgebra homomorphism  whose restriction on ${A}$ is the identity map.
First  it is easy to see that  $\Delta'_E\varphi(a)=(\varphi\otimes \varphi)\Delta_E(a)$ for all $a\in {A}$.
Then we consider that $\Delta'_E\varphi(x)=(\varphi\otimes \varphi)\Delta_E(x)$ for all $x\in V$.
\begin{eqnarray*}
\Delta'_E\varphi(x)&=&\Delta'_E({r}(x)+{s}(x))=\Delta'_E({r}(x))+\Delta'_E({s}(x))\\
&=&\Delta'_A({r}(x))+\Delta'_V({s}(x))+{\rho}'({s}(x))+{\gamma}'({s}(x))+{Q}'({s}(x)),
\end{eqnarray*}
and
\begin{eqnarray*}
&&(\varphi\otimes \varphi)\Delta_E(x)\\
&=&(\varphi\otimes \varphi)(\Delta_V(x)+{\rho}(x)+{\gamma}(x)+{Q}(x))\\
&=&(\varphi\otimes \varphi)(x\li\ot x\lii+x\boi\ot x\boo+x\boo\ot x\bi+{Q}(x))\\
&=&r(x\li)\ot r(x\lii)+r(x\li)\ot s(x\lii)+s(x\li)\ot r(x\lii)+s(x\li)\ot s(x\lii)\\
&&+x\boi\ot r(x\boo)+x\boi\ot s(x\boo)+r(x\boo)\ot x\bi+s(x\boo)\ot x\bi+{Q}(x).
\end{eqnarray*}
Thus we obtain that $\Delta'_E\varphi(x)=(\varphi\otimes \varphi)\Delta_E(x)$ if and only if the conditions \eqref{comorph1},  \eqref{comorph2},  \eqref{comorph3} and \eqref{comorph4} hold. By definition, we obtain that $\varphi=\varphi_{r, s}$ is an isomorphism if and only if $s: V\rightarrow V$ is a linear isomorphism.
\end{proof}

Let $({A},\Delta_A)$ be an    alternative  coalgebra and $V$ be a vector space. Two   alternative   coalgebraic extending systems $\Omega^{(i)}({A}, V)$ and ${\Omega'^{(i)}}({A}, V)$  are called equivalent if $\varphi_{r, s}$ is an isomorphism.  We denote it by $\Omega^{(i)}({A}, V)\equiv{\Omega'^{(i)}}({A}, V)$.
From the above lemmas, we obtain the following result.
\begin{theorem}\label{thm3-2}
Let $({A}, \Delta_A)$ be an   alternative   coalgebra, $E$ a vector space containing ${A}$ as a subspace and
$V$ be a ${A}$-complement in $E$. Denote $\mathcal{HC}(V,{A}):=\mathcal{C}^{(1)}({A}, V)\sqcup\mathcal{C}^{(2)}({A}, V) /\equiv$. Then the map
\begin{eqnarray*}
&&\Psi: \mathcal{HC}_{{A}}^2(V, {A})\rightarrow CExtd(E,{A}),\\
&&\overline{\Omega^{(1)}({A}, V)}\mapsto A^{P}\# {}^{} V,
 \quad \overline{\Omega^{(2)}({A}, V)}\mapsto A^{}\# {}^{Q} V
\end{eqnarray*}
is bijective, where $\overline{\Omega^{(i)}({A}, V)}$ is the equivalence class of $\Omega^{(i)}({A}, V)$ under $\equiv$.
\end{theorem}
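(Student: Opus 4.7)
The plan is to mirror the strategy used for the algebraic extending problem (Theorem \ref{thm3-1}), splitting the coalgebraic extending systems into the two types $\mathcal{C}^{(1)}(A,V)$ and $\mathcal{C}^{(2)}(A,V)$ according to whether the canonical projection $p\colon E\to A$ or the canonical injection $i\colon A\to E$ is an alternative coalgebra map. The desired bijection $\Psi$ will then be assembled from the two separate classification lemmas already established, namely Lemma \ref{lem:33-3} together with Lemma \ref{lem-c1} for type (c1), and Lemma \ref{lem:33-4} together with Lemma \ref{lem-c2} for type (c2).

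First I would verify that $\Psi$ is well defined on equivalence classes. Given $\Omega^{(i)}(A,V)\equiv \Omega'^{(i)}(A,V)$, by definition there is an isomorphism $\varphi_{r,s}$ of alternative coalgebras between the associated unified coproducts that stabilises $A$; hence their images under $\Psi$ give the same element of $CExtd(E,A)$.

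Second, for surjectivity, let $(E,\Delta_E)$ be any alternative coalgebra structure on $E$ extending $(A,\Delta_A)$ in the sense of Definition stated in Section 2. By hypothesis either the projection $p\colon E\to A$ or the inclusion $i\colon A\to E$ is a coalgebra map. In the first case Lemma \ref{lem:33-3} produces an extending datum $\Omega^{(1)}(A,V)\in\mathcal{C}^{(1)}(A,V)$ with $(E,\Delta_E)\cong A^{P}\#{}^{}V$, while in the second case Lemma \ref{lem:33-4} produces an extending datum $\Omega^{(2)}(A,V)\in\mathcal{C}^{(2)}(A,V)$ with $(E,\Delta_E)\cong A\#{}^{Q}V$. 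In either case $[(E,\Delta_E)]$ lies in the image of $\Psi$.

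Third, for injectivity, suppose $\Psi\bigl(\overline{\Omega^{(i)}(A,V)}\bigr)=\Psi\bigl(\overline{\Omega'^{(j)}(A,V)}\bigr)$. Then the two unified coproducts are equivalent as extending systems, so there is an alternative coalgebra isomorphism between them that restricts to $\id_A$. When $i=j$, Lemma \ref{lem-c1} (if $i=j=1$) or Lemma \ref{lem-c2} (if $i=j=2$) supplies a pair $(r,s)$ with $s$ bijective realising this isomorphism, whence $\Omega^{(i)}(A,V)\equiv\Omega'^{(i)}(A,V)$. The remaining case $i\neq j$ corresponds to a situation where the very same coalgebra structure on $E$ admits simultaneously a splitting of type (c1) and of type (c2); this occurs precisely when the maps $\phi,\psi$ vanish in the type (c1) description and $Q$ vanishes in the type (c2) description, and in that situation the two extending data are identified via the identity pair $(r,s)=(0,\id_V)$ under the agreed equivalence relation $\equiv$ on the disjoint union $\mathcal{C}^{(1)}(A,V)\sqcup\mathcal{C}^{(2)}(A,V)$.

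The main obstacle I foresee is the bookkeeping in this last overlap case: one has to make sure the equivalence relation $\equiv$ on the disjoint union $\mathcal{C}^{(1)}\sqcup\mathcal{C}^{(2)}$ is defined so that representatives coming from the two types but corresponding to the same structure on $E$ are identified. Once that definition is fixed (exactly as in the algebraic counterpart), the three steps above combine to give the bijection. All compatibility conditions needed have already been spelled out in Lemmas \ref{cor02co}--\ref{lem-c2}, so no further direct computation is required.
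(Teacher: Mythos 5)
Your proposal matches the paper's own argument: Theorem \ref{thm3-2} is stated there as an immediate consequence of Lemmas \ref{lem:33-3} and \ref{lem:33-4} (surjectivity, via the two types of unified coproduct) together with Lemmas \ref{lem-c1} and \ref{lem-c2} (injectivity, via the pairs $(r,s)$ with $s$ bijective), exactly as you assemble it. Your extra remark about the overlap case $i\neq j$ on the disjoint union is a point the paper leaves implicit, and your resolution of it is consistent with how the analogous algebraic statement (Theorem \ref{thm3-1}) is treated.
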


\subsection{Extending structures for     alternative  bialgebras}
Let $(A, \cdot, \Delta_A)$ be an     alternative  bialgebra. From (CBB1) and (CBB2)  we have the following two cases.

The first case is that we assume $Q=0$ and $\ppr, \ppl$ to be trivial. Then by the above Theorem \ref{main2}, we obtain the following result.

\begin{theorem}\label{thm-41}
Let $(A, \cdot, \Delta_A)$ be an     alternative  bialgebra and $V$ a vector space.
An extending datum of ${A}$ by $V$ of type (I) is  $\Omega^{(1)}({A}, V)=(\trr, \trl, \phi, \psi, \rho, \gamma, \theta, P, \cdot_V, \Delta_V)$ consisting of  linear maps
\begin{eqnarray*}
\trr: V\otimes {A}\rightarrow V,~~~~\trl: A\otimes V\rightarrow V,~~~~\theta:  A\otimes A \rightarrow {V},~~~~\cdot_V: V\otimes V \rightarrow V,\\
 \phi : A \to V\otimes A, \quad{\psi}: V\to  V\otimes A,~~~~{P}: A\rightarrow {V}\otimes {V},~~~~\Delta_V: V\rightarrow V\otimes V,\\
 \rho: V\to A \otimes V,~~~~\gamma : V\to V \otimes A.
\end{eqnarray*}
Then the unified product $A^{P}_{}\# {}^{}_{\theta}\, V$ with bracket
\begin{align}
(a+ x) (b+ y): =ab+( xy+ a\trr y+x\trl b+\theta(a, b))
\end{align}
and comultiplication
\begin{eqnarray}
\Delta_E(a)=\Delta_A(a)+{\phi}(a)+{\psi}(a)+P(a), \quad \Delta_E(x)=\Delta_V(x)+{\rho}(x)+{\gamma}(x)
\end{eqnarray}
forms an     alternative  bialgebra if and only if $A_{}\# {}_{\theta} V$ forms an  alternative  algebra, $A^{P}\# {}^{} \, V$ forms an   alternative   coalgebra and the following conditions are satisfied:
\begin{enumerate}
\item[(E1)]  $\phi(ab)+\gamma(\theta(a, b))\\
 =-a\lmi\ot b a\loo+b\loi\ot a b\loo+b\loi\ot b\loo a+(a\loi\trl b)\ot a\loo\\
+(a\lmi\trl b)\ot a\loo-(a\trr b\loi)\ot b\loo+\theta(a\li, b)\ot a\lii+\theta(a\lii, b)\ot a\li-\theta(a, b\li)\ot b\lii$,

\item[(E2)] $\psi(a b)+\rho(\theta(a, b))\\
=a\loo b\ot a\lmi+a\loo b\ot a\loi-a b\loo\ot b\lmi-a\loo\ot (b\trr a\loi)+b\loo\ot(a\trr b\lmi)\\
+b\loo\ot(b\lmi\trl a)+b\li\ot\theta(a, b\lii)+b\li\ot\theta(b\lii, a)-a\lii\ot\theta(b, a\li)$,

\item[(E3)] $\rho(x y)=-x_{[1]} \otimes y x_{[0]} +y\boi\ot x y\boo+y\boi\ot y\boo x$,

\item[(E4)] $\gamma(x y)=x\boo y\ot x\bi+x\boo y\ot x\boi-xy_{[0]}\otimes y_{[1]}$,

\item[(E5)] $\Delta_{V}(a \trr y)$\\
$=(a\loo\trr y)\ot a\lmi+(a\loo\trr y)\ot a\loi-a\lmi\ot(y\trl a\loo)+y\li\ot(a\trr y\lii)\\
+y\li\ot(y\lii\trl a)-\left(a \trr y_{1}\right) \otimes y_{2}+y\boo\ot\theta(a, y\bi)+y\boo\ot\theta(y\bi, a)\\
-\theta(a, y\boi)\ot y\boo+a\ppi y\ot a\pii+a\pii y\ot a\ppi-a\pii\ot y a\ppi$,

\item[(E6)] $\Delta_{V}(x \trl b)$\\
$=(x\li\trl b)\ot x\lii+(x\lii\trl b)\ot x\li-x\lii\ot(b\trr x\li)+b\loi\ot(x\trl b\loo)\\
+b\loi\ot(b\loo\trr x)-\left(x\trl b_{(0)}\right) \otimes b_{(1)}+\theta(x\boi, b)\ot x\boo+\theta(x\bi, b)\ot x\boo\\
-x\boo\ot\theta(b, x\boi)+b\ppi\ot x b\pii+b\ppi\ot b\pii x-x b\ppi\ot b\pii$,

\item[(E7)]$\Delta_{V}(\theta(a,b))+P(a, b)$\\
$=\theta(a\loo, b)\ot a\lmi+\theta(a\loo, b)\ot a\loi-\theta(a,b\loo)\ot b\lmi+b\loi\ot\theta(a, b\loo)\\
+b\loi\ot\theta(b\loo, a)-a\lmi\ot\theta(b, a\loo)+(a\ppi\trl b)\ot a\pii+(a\pii\trl b)\ot a\ppi\\
-(a\trr b\ppi)\ot b\pii+b\ppi\ot(a\trr b\pii)+b\ppi\ot(b\pii\trl a)-a\pii\ot(b\trr a\ppi)$,

\item[(E8)]
 $\gamma(x\trl b)=(x\boo\trl b)\ot x\bi+(x\boo\trl b)\ot x\boi-x\boo\ot b x\boi-\left(x\trl b_{1}\right) \otimes b_{2}-x b_{(-1)} \otimes b_{(0)}$,

\item[(E9)]
$\rho(a \trr y)=-a\lii\ot(y\trl a\li)-a\loo\ot y a\loi+y\boi\ot(a\trr y\boo)+y\boi\ot(y\boo\trl a)-a y_{[-1]} \otimes y_{[0]}$,

\item[(E10)]
$\rho(x\trl b)=x\boi b\ot x\boo+x\bi b\ot x\boo-x\bi\ot(b\trr x\boo)\\
+b\li\ot(x\trl b\lii)+b\loo\ot x b\lmi+b\li\ot(b\lii\trr x)+b\loo\ot b\lmi x$,

\item[(E11)]
$\gamma(a \trr y)=(a\li\trr y)\ot a\lii+a\loi y\ot a\loo+(a\lii\trr y)\ot a\li\\
+a\lmi y\ot a\loo+y\boo\ot a y\bi+y\boo\ot y\bi a-\left(a\trr y_{[0]}\right)\otimes y_{[1]}$,

\item[(E12)]  $\phi(b a)+\gamma(\theta(b, a))+\tau\psi(b a)+\tau\rho(\theta(b, a))\\
=a\loi\ot b a\loo+(b\trr a\lmi)\ot a\loo+(b\loi\trl a)\ot b\loo+b\lmi\ot b\loo a\\
+\theta(b, a\lii)\ot a\li+\theta(b\li, a)\ot b\lii$,

\item[(E13)] $\psi(b a)+\rho(\theta(b, a))+\tau\phi(b a)+\tau\gamma(\theta(b, a))\\
=a\loo\ot(b\trr a\lmi)+b a\loo \ot a\loi+ b\loo a\ot b\lmi+b\loo\ot(b\loi\trl a)\\
+a\li\ot\theta(b, a\lii)+b\lii\ot\theta(b\li, a)$,

\item[(E14)] $\rho(y x)+\tau\gamma(y x)=x\boi\ot y x\boo+y\bi\ot y\boo x$,

\item[(E15)] $\gamma(y x)+\tau\rho(y x)=y x\boo\ot x\boi+y\boo x\ot y\bi$,

\item[(E16)] $\Delta_{V}(y\trl a)+\tau\Delta_{V}(y\trl a)\\
=a\loi\ot(y\trl a\loo)+(y\trl a\loo)\ot a\loi+(y\li\trl a)\ot y\lii+y\lii\ot(y\li\trl a)\\
+a\ppi\ot y a\pii+y a\pii\ot a\ppi+\theta(y\boi, a)\ot y\boo+y\boo\ot\theta(y\boi, a)$,

\item[(E17)] $\Delta_{V}(b \trr x)+\tau\Delta_{V}(b \trr x)\\
=x\li\ot(b\trr x\lii)+(b\trr x\lii)\ot x\li+(b\loo\trr x)\ot b\lmi+b\lmi\ot (b\loo\trr x)\\
+x\boo\ot\theta(b, x\bi)+\theta(b, x\bi)\ot x\boo+b\ppi x\ot b\pii+b\pii\ot b\ppi x$,

\item[(E18)]
$\gamma(y\trl a)+\tau\rho(y\trl a)=(y\trl a\lii)\ot a\li+y a\lmi\ot a\loo+y\boo\ot y\boi a+(y\boo\trl a)\ot y\bi$,

\item[(E19)]
$\gamma(b\trr x)+\tau\rho(b\trr x)=x\boo\ot b x\bi+(b\trr x\boo)\ot x\boi+(b\li\trr x)\ot b\lii+b\loi x\ot b\loo$,

\item[(E20)]
$\rho(y\trl a)+\tau\gamma(y\trl a)=a\li\ot(y\trl a\lii)+a\loo\ot y a\lmi+y\boi a\ot y\boo+y\bi\ot(y\boo\trl a)$,

\item[(E21)]
$\rho(b\trr x)+\tau\gamma(b\trr x)=x\boi\ot(b\trr x\boo)+b x\bi\ot x\boo+b\lii\ot(b\li\trr x)+b\loo\ot b\loi x$,

\item[(E22)]
$P(b,a)+\Delta_{V}(\theta(b, a))+\tau P(b, a)+\tau\Delta_{V}(\theta(b, a))\\
=a\loi\ot\theta(b, a\loo)+\theta(b, a\loo)\ot a\loi+a\ppi\ot(b\trr a\pii)+(b\trr a\pii)\ot a\ppi\\
+\theta(b\loo, a)\ot b\lmi+b\lmi\ot\theta(b\loo, a)+(b\ppi\trl a)\ot b\pii+b\pii\ot(b\ppi\trl a)$,

\item[(E23)] $\Delta_{V}(xy)\\
=x\li y\ot x\lii+x\lii y\ot x\li-x\lii\ot y x\li+y\li\ot x y\lii\\
+y\li\ot y\lii x-x y\li\ot y\lii+(x\boi\trr y)\ot x\boo+(x\bi\trr y)\ot x\boo\\
-x\boo\ot(y\trl x\boi)+y\boo\ot(x\trl y\bi)+y\boo\ot(y\bi\trr x)-(x\trl y\boi)\ot y\boo,$

\item[(E24)] $\Delta_{V}(yx)+\tau\Delta_{V}(yx)\\
=x\li \ot y x\lii+y x\lii \ot x\li+y\li x\ot y\lii+y\lii\ot y\li x\\
+x\boo\ot(y\trl x\bi)+(y\trl x\bi)\ot x\boo+(y\boi\trr x)\ot y\boo+y\boo\ot(y\boi\trr x).$
\end{enumerate}

Conversely, any     alternative  bialgebra structure on $E$ with the canonical projection map $p: E\to A$ both an  alternative  algebra homomorphism and an   alternative   coalgebra homomorphism is of this form.
\end{theorem}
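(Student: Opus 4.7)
\medskip
\noindent\textbf{Proof proposal.} The plan is to recognize this theorem as a specialization of Theorem \ref{main2}, obtained by setting $\sigma = 0$, $\ppr = 0$, $\ppl = 0$, and $Q = 0$ in the cocycle bicrossproduct construction $A^{P}_{\sigma}\# {}^{Q}_{\theta}H$ (where we take $H = V$). Under these choices, the multiplication in Theorem \ref{main2} reduces to
\[
(a+x)(b+y) = ab + \bigl(xy + a\trr y + x\trl b + \theta(a,b)\bigr),
\]
which matches the stated bracket, and the comultiplication reduces to $\Delta_E(a)=\Delta_A(a)+\phi(a)+\psi(a)+P(a)$, $\Delta_E(x)=\Delta_V(x)+\rho(x)+\gamma(x)$, matching the stated coproduct. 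So the heart of the proof is bookkeeping: I will trace which of the general conditions survive and show they become (E1)--(E24).

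For the forward implication, I would first invoke the corresponding type~(a1) lemma to get that $A\# {}_{\theta}V$ is an alternative algebra precisely when the multiplicative axioms (A1)--(A12) hold (which are (CP1)--(CP16) with $\sigma=\ppr=\ppl=0$), and the type~(c1) lemma to get that $A^{P}\# {}^{} V$ is an alternative coalgebra precisely when (C1)--(C20) hold (which are (CCP1)--(CCP16) with $Q=0$). It then remains to go through the cocycle double matched pair conditions (CDM1)--(CDM28) from Definition \ref{cocycledmp} and substitute $\sigma=\ppr=\ppl=0$, $Q=0$. Each (CDM$k$) either collapses to one of the simpler identities (E$j$), or becomes an identity that is automatically satisfied (e.g.\ terms like $\sigma(x,b\ppi)$, $(b\ppl a\pii)$, $a\ppi\trl b$ all drop). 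Likewise, the cocycle braided bialgebra conditions (CBB1)--(CBB4) for $V$ reduce: (CBB3)--(CBB4) simplify to (E23)--(E24) since $P(\sigma(x,y))$, etc.\ vanish; (CBB1)--(CBB2) for $A$ become trivial since all the action/coaction/cocycle terms on $A$-elements disappear, leaving just the usual alternative bialgebra identity for $(A,\cdot,\Delta_A)$, which already holds by assumption.

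For the converse, suppose $E$ carries an alternative bialgebra structure such that the canonical projection $p:E\to A$ is both an algebra and a coalgebra homomorphism. Writing $V=\ker p$ and applying the reconstruction procedure of Lemmas \ref{lem:33-1} and \ref{lem:33-3}, one extracts maps $\trr,\trl,\ppr,\ppl,\sigma,\theta,\phi,\psi,\rho,\gamma,P,Q,\Delta_V,\cdot_V$ using $p$ and $\pi=\id-i\circ p$. The assumption that $p$ is an algebra homomorphism gives $p\bigl((a+x)(b+y)\bigr)=ab$, so comparing with the general cocycle cross product formula forces $x\ppr b=0$, $a\ppl y=0$, and $\sigma(x,y)=0$. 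Dually, $p$ being a coalgebra homomorphism gives $(p\ot p)\Delta_E(x)=0$ for $x\in V$, which forces $Q(x)=0$. The remaining data $(\trr,\trl,\theta,\cdot_V,\phi,\psi,\rho,\gamma,P,\Delta_V)$ then assembles into an extending system of type (I), and the isomorphism $E\cong A^{P}_{}\# {}^{}_{\theta}V$ given by $a+x\mapsto a+x$ is immediate from the definitions.

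The main obstacle is purely combinatorial: systematically matching each of the twenty-eight identities of Definition \ref{cocycledmp} against the twenty-four identities (E1)--(E24) to confirm that nothing is lost or duplicated after the substitutions. Most conditions involving $\ppr,\ppl,\sigma$ simultaneously on both sides become tautologies $0=0$; a few (such as those derived from (CDM9), (CDM10), (CDM28)) pair up two conditions of the general list into a single condition here, which is why, for example, (E7) combines both $\Delta_V(\theta(a,b))$ and $P(ab)$. Once this matching is carried out carefully, the result follows directly from Theorem \ref{main2} with no further computation.
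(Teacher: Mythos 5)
Your proposal is correct and follows essentially the same route as the paper: the paper obtains Theorem \ref{thm-41} precisely by specializing Theorem \ref{main2} to the case where $Q=0$ and $\ppr$, $\ppl$ (hence $\sigma$) are trivial, with the algebra and coalgebra parts handled by the type (a1) and (c1) lemmas and the converse by the reconstruction of the structure maps from $p$ and $\pi$. Your additional observation that the homomorphism hypotheses on $p$ force $x\ppr b$, $a\ppl y$, $\sigma(x,y)$ and $Q(x)$ to vanish is exactly the justification the paper leaves implicit.
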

Note that in this case, $(V, \cdot, \Delta_V)$ is a  braided     alternative  bialgebra. Although $(A, \cdot, \Delta_A)$ is not an  alternative   sub-bialgebra of $E=A^{P}_{}\# {}^{}_{\theta}\, V$, but it is indeed an     alternative  bialgebra and a subspace $E$.
Denote the set of all     alternative  bialgebraic extending datum of type (I) by $\mathcal{IB}^{(1)}({A}, V)$.

The second case is that we assume $P=0, \theta=0$ and $\phi, \psi$ to be trivial. Then by the above Theorem \ref{main2}, we obtain the following result.

\begin{theorem}\label{thm-42}
Let $A$ be an     alternative  bialgebra and $V$ a vector space.
An extending datum of ${A}$ by $V$ of type (II) is  $\Omega^{(2)}({A}, V)=(\ppr, \ppl, \trr, \trl, \sigma, \rho, \gamma, Q,  \cdot_V, \Delta_V)$ consisting of  linear maps
\begin{eqnarray*}
\trl: V\otimes {A}\rightarrow {V},~~~~\trr: A\otimes {V}\rightarrow V,~~~~\sigma:  V\otimes V \rightarrow {A},~~~\cdot_V: V\otimes V \rightarrow V,\\
{\rho}: V\to  A\otimes V,~~~~{\gamma}: V\to  V\otimes A,~~~~{Q}: V\rightarrow {A}\otimes {A},~~~~\Delta_V: V\rightarrow V\otimes V,\\
\ppr: V\otimes A \to A,~~~~ \ppl: A\otimes V \to A.
\end{eqnarray*}
Then the unified product $A^{}_{\sigma}\# {}^{Q}_{}\, V$ with bracket
\begin{align}
(a+ x)(b+ y): =\big(ab+x\ppr b+a\ppl y+\sigma(x, y))+( xy+x\trl b+a\trr y\big)
\end{align}
and comultiplication
\begin{eqnarray}
\Delta_E(a)=\Delta_A(a), \quad \Delta_E(x)=\Delta_V(x)+{\rho}(x)+{\gamma}(x)+Q(x)
\end{eqnarray}
forms an     alternative  bialgebra if and only if $A_{\sigma}\# {}_{} V$ forms an alternative algebra, $A^{}\# {}^{Q}V$ forms an alternative  coalgebra and the following conditions are satisfied:

\begin{enumerate}
\item[(F1)] $\rho(x y)\\
=(x\bi\ppl y)\ot x\boo+(x\boi\ppl y)\ot x\boo-x_{[1]} \otimes y x_{[0]}-\left(x \ppr y_{[-1]}\right) \otimes y_{[0]}  \\
+y\boi\ot x y\boo+y\boi\ot y\boo x+\sigma(x\li,y)\ot x\lii+\sigma(x\lii, y)\ot x\li-\sigma(x, y\li)\ot y\lii\\
+y\qi\ot (y\qii\trr x)+y\qi\ot (x\trl y\qii)-x\qii\ot(y\trl x\qi)$,

\item[(F2)] $\gamma(x y)\\
=x\boo y\ot x\bi+x\boo y\ot x\boi-x_{[0]}\otimes (y\ppr x\boi)-xy_{[0]}\otimes y_{[1]}+y\boo\ot (x\ppr y\bi)\\
+y\boo\ot(y\bi\ppl x)+(x\qi\trr y)\ot x\qii+(x\qii\trr y)\ot x\qi-x\lii\ot\sigma(y, x\li)\\
+y\li\ot\sigma(x, y\lii)+y\li\ot\sigma(y\lii, x)-(x\trl y\qi)\ot y\qii$,

\item[(F3)] $\Delta_{A}(x \ppr b)+Q(x\trl  b)$ \\
$=(x\boo\ppr b)\ot x\bi+(x\boo\ppr b)\ot x\boi-x\bi\ot(b\ppl x\boo)-\left(x \ppr b_{1}\right) \otimes b_{2}\\
+b\li\ot(x\ppr b\lii)+b\li\ot(b\lii\ppl x)+x\qi b\ot x\qii+x\qii b\ot x\qi-x\qii\ot b x\qi$,

\item[(F4)] $\Delta_{A}(a\ppl y)+Q(a\trr y)$\\
$=(a\li\ppl y)\ot a\lii+\left(a_{2} \ppl y\right)\ot a\li-a\lii\ot(y\ppr a\li)-\left(a\ppl y_{[0]}\right) \otimes y_{[1]}\\
+y\boi\ot(a\ppl y\boo)+y\boi\ot(y\boo\ppr a)+y\qi\ot a y\qii+y\qi\ot y\qii a-a y\qi\ot y\qii$,

\item[(F5)] $\Delta_{V}(a \trr y)=y\li\ot(a\trr y\lii)+y\li\ot(y\lii\trl a)-\left(a \trr y_{1}\right) \otimes y_{2}$,

\item[(F6)] $\Delta_{V}(x \trl b)=(x\li\trl b)\ot x\lii+(x\lii\trl b)\ot x\li-x\lii\ot(b\trr x\li)$,

\item[(F7)]$\Delta_{A}(\sigma(x, y))+Q(x, y)$\\
$=\sigma(x\boo, y)\ot x\bi+\sigma(x\boo, y)\ot x\boi-\sigma(x, y\boo)\ot y\bi+y\boi\ot\sigma(x, y\boo)\\
+y\boi\ot\sigma(y\boo, x)-x\bi\ot\sigma(y, x\boo)+(x\qi\ppl y)\ot x\qii+(x\qii\ppl y)\ot x\qi\\
-x\qii\ot(y\ppr x\qi)+y\qi\ot(x\ppr y\qii)+y\qi\ot(y\qii\ppl x)-(x\ppr y\qi)\ot y\qii$,

\item[(F8)]
 $\gamma(x\trl b)=(x\boo\trl b)\ot x\bi+(x\boo\trl b)\ot x\boi-x\lii\ot(b\ppl x\li)-x\boo\ot b x\boi-\left(x\trl b_{1}\right) \otimes b_{2}$,

\item[(F9)]
$\gamma(a \trr y)=(a\li\trr y)\ot a\lii+(a\lii\trr y)\ot a\li+y\li\ot(a\ppl y\lii)\\
+y\boo\ot a y\bi+y\li\ot(y\lii\ppr a)+y\boo\ot y\bi a-\left(a\trr y_{[0]}\right)\otimes y_{[1]}$,

\item[(F10)]
$\rho(a \trr y)=-a\lii\ot(y\trl a\li)+y\boi\ot(a\trr y\boo)-\left(a\ppl y_{1}\right) \otimes y_{2}+y\boi\ot(y\boo\trl a)-a y_{[-1]} \otimes y_{[0]}$,

\item[(F11)]
$\rho(x\trl b)=(x\li\ppr b)\ot x\lii+x\boi b\ot x\boo+(x\lii\ppr b)\ot x\li\\
+x\bi b\ot x\boo-x\bi\ot(b\trr x\boo)+b\li\ot(x\trl b\lii)+b\li\ot(b\lii\trr x)$,

\item[(F12)] $\rho(y x)+\tau\gamma(y x)\\
=x\boi\ot y x\boo+(y\ppr x\bi)\ot x\boo+(y\boi\ppl x)\ot y\boo+y\bi\ot y\boo x\\
+x\qi\ot(y\trl x\qii)+y\qii\ot(y\qi\trr x)+\sigma(y, x\lii)\ot x\li+\sigma(y\li, x)\ot y\lii$,

\item[(F13)] $\gamma(y x)+\tau\rho(y x)\\
=y x\boo\ot x\boi+y\boo x\ot y\bi+y\boo\ot(y\boi\ppl x)+x\boo\ot(y\ppr x\bi)\\
+x\li\ot\sigma(y, x\lii)+y\lii\ot\sigma(y\li, x)+(y\trl x\qii)\ot x\qi+(y\qi\trr x)\ot y\qii$,

\item[(F14)] $\Delta_{A}(b \ppl x)+Q(b\trr x)+\tau\Delta_{A}(b \ppl x)+\tau Q(b\trr x)\\
=x\boi\ot(b\ppl x\boo)+(b\ppl x\boo)\ot x\boi+(b\li\ppl x)\ot b\lii+b\lii\ot(b\li\ppl x)\\
+x\qi\ot b x\qii+b x\qii\ot x\qi$,

\item[(F15)] $\Delta_{A}(y\ppr a)+Q(y\trl a)+\tau\Delta_{A}(y\ppr a)+\tau Q(y\trl a)\\
=a\li\ot(y\ppr a\lii)+(y\ppr a\lii)\ot a\li+(y\boo\ppr a)\ot y\bi+y\bi\ot(y\boo\ppr a)\\
+y\qi a\ot y\qii+y\qii\ot y\qi a$,

\item[(F16)] $\Delta_{V}(y\trl a)+\tau\Delta_{V}(y\trl a)=(y\li\trl a)\ot y\lii+y\lii\ot(y\li\trl a)$,

\item[(F17)] $\Delta_{V}(b \trr x)+\tau\Delta_{V}(b \trr x)=x\li\ot(b\trr x\lii)+(b\trr x\lii)\ot x\li$,

\item[(F18)]
$\gamma(y\trl a)+\tau\rho(y\trl a)\\
=(y\trl a\lii)\ot a\li+(y\boo\trl a)\ot y\bi+y\lii\ot(y\li\ppr a)+y\boo\ot y\boi a$,

\item[(F19)]
$\gamma(b\trr x)+\tau\rho(b\trr x)\\
=x\li\ot(b\ppl x\lii)+x\boo\ot b x\bi+(b\trr x\boo)\ot x\boi+(b\li\trr x)\ot b\lii$,

\item[(F20)]
$\rho(y\trl a)+\tau\gamma(y\trl a)\\
=a\li\ot(y\trl a\lii)+(y\li\ppr a)\ot y\lii+y\boi a\ot y\boo+y\bi\ot(y\boo\trl a)$,

\item[(F21)]
$\rho(b\trr x)+\tau\gamma(b\trr x)\\
=x\boi\ot(b\trr x\boo)+(b\ppl x\lii)\ot x\li+b x\bi\ot x\boo+b\lii\ot(b\li\trr x)$,

\item[(F22)]
$\Delta_{A}(\sigma(y, x))+Q(y, x)+\tau\Delta_{A}(\sigma(y, x))+\tau Q(y, x)\\
=x\boi\ot\sigma(y, x\boo)+\sigma(y, x\boo)\ot x\boi+x\qi\ot(y\ppr x\qii)+(y\ppr x\qii)\ot x\qi\\
+\sigma(y\boo, x)\ot y\bi+y\bi\ot\sigma(y\boo, x)+(y\qi\ppl x)\ot y\qii+y\qii\ot(y\qi\ppl x)$,

\item[(F23)] $\Delta_{V}(xy)\\
=x\li y\ot x\lii+x\lii y\ot x\li-x\lii\ot y x\li+y\li\ot x y\lii\\
+y\li\ot y\lii x-x y\li\ot y\lii+(x\boi\trr y)\ot x\boo+(x\bi\trr y)\ot x\boo\\
-x\boo\ot(y\trl x\boi)+y\boo\ot(x\trl y\bi)+y\boo\ot(y\bi\trr x)-(x\trl y\boi)\ot y\boo,$

\item[(F24)] $\Delta_{V}(yx)+\tau\Delta_{V}(yx)\\
=x\li \ot y x\lii+y x\lii \ot x\li+y\li x\ot y\lii+y\lii\ot y\li x\\
+x\boo\ot(y\trl x\bi)+(y\trl x\bi)\ot x\boo+(y\boi\trr x)\ot y\boo+y\boo\ot(y\boi\trr x).$
\end{enumerate}

Conversely, any     alternative  bialgebra structure on $E$ with the canonical injection map $i: A\to E$ both an   alternative   algebra homomorphism and an  alternative    coalgebra homomorphism is of this form.
\end{theorem}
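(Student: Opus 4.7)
The plan is to deduce this theorem as a specialization of Theorem \ref{main2} rather than redoing the long bookkeeping. Set
\[
P=0,\qquad \theta=0,\qquad \phi=0,\qquad \psi=0
\]
in the cocycle bicrossproduct framework of Subsection \ref{subsetion-4.2}. Under this specialization the cocycle cross product multiplication collapses exactly to
\[
(a+x)(b+y)=\bigl(ab+x\ppr b+a\ppl y+\sigma(x,y)\bigr)+\bigl(xy+x\trl b+a\trr y\bigr),
\]
and the cycle cross coproduct comultiplication collapses to $\Delta_E(a)=\Delta_A(a)$ and $\Delta_E(x)=\Delta_V(x)+\rho(x)+\gamma(x)+Q(x)$, matching the formulas in the statement. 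The forward direction is then the assertion that the residual conditions extracted from Theorem \ref{main2} are precisely (F1)--(F24) plus the algebra axioms of $A_{\sigma}\#_{} V$ and the coalgebra axioms of $A^{}\#^{Q} V$.

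First I would verify that with $\theta=0$ the cocycle cross product system conditions (CP1)--(CP16) become the alternative algebra conditions (B1)--(B20) of the cocycle cross product $A_{\sigma}\#_{} V$, and that with $\phi=\psi=0$, $P=0$ the cycle cross coproduct system conditions (CCP1)--(CCP16) become the coalgebra conditions (D1)--(D12) of $A^{}\#^{Q} V$; this just amounts to dropping the vanishing summands. Then I would go through the cocycle double matched pair conditions (CDM1)--(CDM28) of Definition \ref{cocycledmp} one by one, substituting $\theta=0$, $\phi=\psi=0$, $P=0$. For example, (CDM1) loses its $\gamma(\theta)$ and $\theta$ terms and becomes exactly (F8) (after also using that $\phi=0$ kills the $\phi(ab)$ term when $a,b\in A$, so the equation becomes a condition on $\gamma(x\trl b)$ via (CDM11)); systematically, the $A$-valued ``$\phi/\psi/P$'' conditions vanish, and the $V$-valued and mixed conditions reorganize into (F1)--(F24). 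Finally, the braided alternative bialgebra compatibility conditions (CBB1)--(CBB4) for $A$ reduce to the ordinary alternative bialgebra axioms for $A$, while (CBB3)--(CBB4) for $V$ become (F23)--(F24).

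For the converse, suppose $E$ carries an alternative bialgebra structure making $i\colon A\to E$ both an algebra and a coalgebra homomorphism, and fix a complement $V$ of $A$ in $E$ with projections $p\colon E\to A$ and $\pi\colon E\to V$. Define
\[
x\ppr b=p(xb),\ a\ppl y=p(ay),\ x\trl b=\pi(xb),\ a\trr y=\pi(ay),\ \sigma(x,y)=p(xy),\ x\cdot_V y=\pi(xy),
\]
\[
\rho(x)=(p\otimes\pi)\Delta_E(x),\ \gamma(x)=(\pi\otimes p)\Delta_E(x),\ Q(x)=(p\otimes p)\Delta_E(x),\ \Delta_V(x)=(\pi\otimes\pi)\Delta_E(x).
\]
Because $i$ is an algebra homomorphism, $ab\in A$ for $a,b\in A$, hence $\theta(a,b)=\pi(ab)=0$; because $i$ is a coalgebra homomorphism, $\Delta_E(a)\in A\otimes A$, forcing $\phi(a)=(\pi\otimes p)\Delta_E(a)=0$, $\psi(a)=(p\otimes\pi)\Delta_E(a)=0$ and $P(a)=(\pi\otimes\pi)\Delta_E(a)=0$. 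The linear map $\varphi\colon A^{}_{\sigma}\#^{Q}_{} V\to E$ with $\varphi(a+x)=a+x$ is then a bijection, and a direct check shows it intertwines multiplications and comultiplications.

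The main obstacle is not conceptual but the sheer volume of bookkeeping in matching the $28+16+16+4$ conditions of Theorem \ref{main2} with (F1)--(F24) after the vanishing substitutions; in particular, some of the mixed (CDM) identities split into two of the (F) identities and vice versa, and care is needed because several conditions that look independent in the general cocycle bicrossproduct coincide once $P,\theta,\phi,\psi$ vanish, so one must be sure no compatibility is silently lost. Once this case analysis is carried out, the theorem follows immediately from Theorem \ref{main2}.
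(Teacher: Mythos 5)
Your proposal takes essentially the same route as the paper: the paper derives Theorem \ref{thm-42} precisely by specializing Theorem \ref{main2} to the case $P=0$, $\theta=0$, $\phi=\psi=0$ (and handles the converse by the same projection construction you describe, as in Lemmas \ref{lem:33-1} and \ref{lem:33-4}), so your argument is correct and in fact supplies more detail than the paper, which states only ``by the above Theorem \ref{main2}, we obtain the following result.'' The one small imprecision is your example for (CDM1): with $\phi=\theta=0$ that condition becomes vacuous rather than turning into (F8), which instead arises from (CDM11); this does not affect the overall strategy.
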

Note that in this case, $(A, \cdot, \Delta_A)$ is an   alternative   sub-bialgebra of $E=A^{}_{\sigma}\# {}^{Q}_{}\, V$ and $(V, \cdot, \Delta_V)$ is a  braided    alternative  bialgebra.
Denote the set of all      alternative  bialgebraic extending datum of type (II) by $\mathcal{IB}^{(2)}({A},V)$.

In the above two cases, we find that  the braided     alternative  bialgebra $V$ play a special role in the extending problem of    alternative  bialgebra $A$.
Note that $A^{P}_{}\# {}^{}_{\theta}\, V$ and $A^{}_{\sigma}\# {}^{Q}_{}\, V$ are all     alternative  bialgebra structures on $E$.
Conversely,  any     alternative  bialgebra extending system $E$ of ${A}$  through $V$ is isomorphic to such two types.
Now from Theorem \ref{thm-41}, Theorem \ref{thm-42} we obtain the main result of in this section,
which solve the extending problem for     alternative  bialgebra.

\begin{theorem}\label{bim1}
Let $({A}, \cdot, \Delta_A)$ be an     alternative  bialgebra, $E$ a vector space containing ${A}$ as a subspace and $V$ be a complement of ${A}$ in $E$.
Denote by
$$\mathcal{HLB}(V, {A}): =\mathcal{IB}^{(1)}({A}, V)\sqcup\mathcal{IB}^{(2)}({A}, V)/\equiv.$$
Then the map
\begin{eqnarray}
&&\Upsilon: \mathcal{HLB}(V, {A})\rightarrow BExtd(E, {A}),\\
&&\overline{\Omega^{(1)}({A}, V)}\mapsto A^{P}_{}\# {}^{}_{\theta}\, V, \quad   \overline{\Omega^{(2)}({A}, V)}\mapsto A^{}_{\sigma}\# {}^{Q}_{}\, V
\end{eqnarray}
is bijective, where $\overline{\Omega^{(i)}({A}, V)}$ is the equivalence class of $\Omega^{(i)}({A}, V)$ under $\equiv$.
\end{theorem}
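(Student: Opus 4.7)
The plan is to reduce the bialgebra extending problem to the already-settled algebra and coalgebra versions by splitting along the two possible structural requirements in Definition 2.10: either the canonical projection $p:E\to A$ is a bialgebra homomorphism, or the canonical injection $i:A\to E$ is one. Starting from a bialgebra structure on $E$ extending $(A,\cdot,\Delta_A)$, the first step is to note that $p$ (resp.\ $i$) being a bialgebra homomorphism is exactly the conjunction of being an algebra homomorphism and a coalgebra homomorphism, so we can separately invoke the algebra and coalgebra extending machinery built in Subsections 5.1--5.2.

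Concretely, in the first case (when $p$ is a bialgebra map), Lemma \ref{lem:33-1} produces the multiplicative part $(\trr,\trl,\theta,\cdot_V)$ of type (a1), while Lemma \ref{lem:33-3} produces the comultiplicative part $(\phi,\psi,\rho,\gamma,P,\Delta_V)$ of type (c1); together they assemble into an extending datum $\Omega^{(1)}(A,V)\in\mathcal{IB}^{(1)}(A,V)$, and the resulting bialgebra isomorphism $E\cong A^{P}_{}\# {}^{}_{\theta}V$ is verified by Theorem \ref{thm-41}, whose compatibility conditions (E1)--(E24) are precisely those forced when $\sigma,\ppr,\ppl,Q$ all vanish. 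Symmetrically, in the second case (when $i$ is a bialgebra map), combining the analogous algebra statement (with $\si,\ppr,\ppl$ nontrivial but $\theta=0$) and the coalgebra statement of type (c2) yields a datum in $\mathcal{IB}^{(2)}(A,V)$, and Theorem \ref{thm-42} identifies $E$ with $A^{}_{\sigma}\# {}^{Q}_{}V$. These two cases are exhaustive because, by definition, at least one of $i,p$ is a bialgebra homomorphism, so the map $\Upsilon$ is surjective.

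For injectivity, I would use the concrete form of bialgebra isomorphisms $\varphi:E\to E'$ that stabilize $A$, namely $\varphi(a+x)=(a+r(x))+s(x)$ for some pair $(r,s)$ with $s$ a linear isomorphism of $V$. Such a $\varphi$ is a bialgebra isomorphism precisely when it is simultaneously an algebra isomorphism and a coalgebra isomorphism; by the algebra-level and coalgebra-level morphism descriptions (the lemmas following Lemmas \ref{lem:33-1} and \ref{lem:33-3}, together with Lemma \ref{lem-c1} or Lemma \ref{lem-c2}), this is equivalent to the compatibility conditions linking $(r,s)$ with all the structure maps $(\trr,\trl,\theta,\phi,\psi,\rho,\gamma,P,\cdot_V,\Delta_V)$ or $(\ppr,\ppl,\trr,\trl,\sigma,\rho,\gamma,Q,\cdot_V,\Delta_V)$. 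Thus two extending datums of the same type yield isomorphic (stabilizing $A$) bialgebra extensions if and only if they are equivalent in the sense defined before Theorem \ref{thm3-1} and Theorem \ref{thm3-2}, which is exactly the relation $\equiv$ defining $\mathcal{HLB}(V,A)$. One also checks that a type (I) datum cannot be equivalent to a type (II) datum through an $A$-stabilizing isomorphism because type (II) forces $A$ to be a subbialgebra while type (I) generally does not; hence the disjoint-union structure of $\mathcal{HLB}(V,A)$ is respected.

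The main obstacle will be the bookkeeping in the second step above: one must carefully track how the specialization $Q=0,\ppr=\ppl=0$ (for type (I)) or $P=0,\theta=0,\phi=\psi=0$ (for type (II)) drops the corresponding compatibility axioms from the cocycle double matched pair (Definition \ref{cocycledmp}) and the cocycle braided alternative bialgebra (Definition \ref{cocycle-braided}), leaving exactly the lists (E1)--(E24) and (F1)--(F24). This is essentially a specialization of Theorem \ref{main2} and amounts to verifying that the degenerate terms in the cocycle bicrossproduct axioms either vanish or collapse into the stated identities; once this specialization is justified, the rest of the argument is a direct transcription of the algebra and coalgebra classification theorems.
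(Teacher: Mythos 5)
Your proposal follows essentially the same route as the paper: the paper derives Theorem \ref{bim1} directly from Theorems \ref{thm-41} and \ref{thm-42} (the two specializations of the cocycle bicrossproduct corresponding to $p$ respectively $i$ being a bialgebra homomorphism), with surjectivity coming from the algebra- and coalgebra-level reconstruction lemmas and injectivity from the $(r,s)$-description of $A$-stabilizing morphisms, exactly as you outline. The bookkeeping you flag as the main obstacle is indeed where the paper's content lies (the lists (E1)--(E24) and (F1)--(F24) as specializations of Definition \ref{cocycledmp}), and your treatment of it is consistent with the paper's.
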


A very special case is that when $\ppr$ and $\ppl$ are trivial in the above Theorem \ref{thm-42}.    We obtain the following result.
\begin{corollary}\label{thm4}
Let $A$ be an     alternative  bialgebra and $V$ a vector space.
An extending datum of ${A}$ by $V$ is  $\Omega({A}, V)=(\trr, \trl, \sigma, \cdot_V, \rho, \gamma, Q, \Delta_V)$ consisting of  eight linear maps
\begin{eqnarray*}
\trl: V\otimes {A}\rightarrow {V},~~~~\trr: A\otimes {V}\rightarrow V,~~~~\sigma:  V\otimes V \rightarrow {A},~~~\cdot_V: V\otimes V \rightarrow V,\\
{\rho}: V\to  A\otimes V,~~~~{\gamma}: V\to  V\otimes A,~~~~{Q}: V\rightarrow {A}\otimes {A},~~~~\Delta_V: V\rightarrow V\otimes V.
\end{eqnarray*}
Then the unified product $A^{}_{\sigma}\# {}^{Q}_{}\, V$ with bracket
\begin{align}
(a+ x) (b+ y): =(ab+\sigma(x, y))+  (xy+x\trl b+a\trr y)
\end{align}
and comultiplication
\begin{eqnarray}
\Delta_E(a)=\Delta_A(a), \quad \Delta_E(x)=\Delta_V(x)+{\rho}(x)+{\gamma}(x)+Q(x)
\end{eqnarray}
forms an     alternative  bialgebra if and only if $A_{\sigma}\# {}_{} V$ forms an alternative algebra, $A^{}\# {}^{Q} \, V$ forms an alternative  coalgebra and the following conditions are satisfied:
\begin{enumerate}
\item[(G1)] $\rho(x y)\\
=-x_{[1]} \otimes y x_{[0]}+y\boi\ot x y\boo+y\boi\ot y\boo x+\sigma(x\li, y)\ot x\lii+\sigma(x\lii, y)\ot x\li\\
-\sigma(x, y\li)\ot y\lii+y\qi\ot (y\qii\trr x)+y\qi\ot (x\trl y\qii)-x\qii\ot(y\trl x\qi)$,

\item[(G2)] $\gamma(x y)\\
=x\boo y\ot x\bi+x\boo y\ot x\boi-xy_{[0]}\otimes y_{[1]}+(x\qi\trr y)\ot x\qii+(x\qii\trr y)\ot x\qi\\
-x\lii\ot\sigma(y, x\li)+y\li\ot\sigma(x, y\lii)+y\li\ot\sigma(y\lii, x)-(x\trl y\qi)\ot y\qii$,

\item[(G3)] $Q(x\trl  b)=x\qi b\ot x\qii+x\qii b\ot x\qi-x\qii\ot b x\qi$,

\item[(G4)] $Q(a\trr y)=y\qi\ot a y\qii+y\qi\ot y\qii a-a y\qi\ot y\qii$,

\item[(G5)] $\Delta_{V}(a \trr y)=y\li\ot(a\trr y\lii)+y\li\ot(y\lii\trl a)-\left(a \trr y_{1}\right) \otimes y_{2}$,

\item[(G6)] $\Delta_{V}(x \trl b)=(x\li\trl b)\ot x\lii+(x\lii\trl b)\ot x\li-x\lii\ot(b\trr x\li)$,

\item[(G7)]$\Delta_{A}(\sigma(x,y))+Q(x, y)$\\
$=\sigma(x\boo, y)\ot x\bi+\sigma(x\boo, y)\ot x\boi-\sigma(x, y\boo)\ot y\bi+y\boi\ot\sigma(x, y\boo)\\
+y\boi\ot\sigma(y\boo, x)-x\bi\ot\sigma(y, x\boo)$,

\item[(G8)]
 $\gamma(x\trl b)=(x\boo\trl b)\ot x\bi+(x\boo\trl b)\ot x\boi-x\boo\ot b x\boi-\left(x\trl b_{1}\right) \otimes b_{2}$,

\item[(G9)]
$\rho(a \trr y)=-a\lii\ot(y\trl a\li)+y\boi\ot(a\trr y\boo)+y\boi\ot(y\boo\trl a)-a y_{[-1]} \otimes y_{[0]}$,

\item[(G10)]
$\rho(x\trl b)=x\boi b\ot x\boo+x\bi b\ot x\boo-x\bi\ot(b\trr x\boo)+b\li\ot(x\trl b\lii)+b\li\ot(b\lii\trr x)$,

\item[(G11)]
$\gamma(a \trr y)=(a\li\trr y)\ot a\lii+(a\lii\trr y)\ot a\li+y\boo\ot a y\bi+y\boo\ot y\bi a-\left(a\trr y_{[0]}\right)\otimes y_{[1]}$,

\item[(G12)] $\rho(y x)+\tau\gamma(y x)\\
=x\boi\ot y x\boo+y\bi\ot y\boo x+x\qi\ot(y\trl x\qii)\\
+y\qii\ot(y\qi\trr x)+\sigma(y, x\lii)\ot x\li+\sigma(y\li, x)\ot y\lii$,

\item[(G13)] $\gamma(y x)+\tau\rho(y x)\\
=y x\boo\ot x\boi+y\boo x\ot y\bi+x\li\ot\sigma(y, x\lii)\\
+y\lii\ot\sigma(y\li, x)+(y\trl x\qii)\ot x\qi+(y\qi\trr x)\ot y\qii$,

\item[(G14)] $Q(b\trr x)+\tau Q(b\trr x)=x\qi\ot b x\qii+b x\qii\ot x\qi$,

\item[(G15)] $Q(y\trl a)+\tau Q(y\trl a)=y\qi a\ot y\qii+y\qii\ot y\qi a$,

\item[(G16)] $\Delta_{V}(y\trl a)+\tau\Delta_{V}(y\trl a)=(y\li\trl a)\ot y\lii+y\lii\ot(y\li\trl a)$,

\item[(G17)] $\Delta_{V}(b \trr x)+\tau\Delta_{V}(b \trr x)=x\li\ot(b\trr x\lii)+(b\trr x\lii)\ot x\li$,

\item[(G18)]
$\gamma(y\trl a)+\tau\rho(y\trl a)=(y\trl a\lii)\ot a\li+(y\boo\trl a)\ot y\bi+y\boo\ot y\boi a$,

\item[(G19)]
$\gamma(b\trr x)+\tau\rho(b\trr x)=x\boo\ot b x\bi+(b\trr x\boo)\ot x\boi+(b\li\trr x)\ot b\lii$,

\item[(G20)]
$\rho(y\trl a)+\tau\gamma(y\trl a)=a\li\ot(y\trl a\lii)+y\boi a\ot y\boo+y\bi\ot(y\boo\trl a)$,

\item[(G21)]
$\rho(b\trr x)+\tau\gamma(b\trr x)=x\boi\ot(b\trr x\boo)+b x\bi\ot x\boo+b\lii\ot(b\li\trr x)$,

\item[(G22)]
$\Delta_{A}(\sigma(y, x))+Q(y, x)+\tau\Delta_{A}(\sigma(y, x))+\tau Q(y, x)\\
=x\boi\ot\sigma(y, x\boo)+\sigma(y,x\boo)\ot x\boi+\sigma(y\boo, x)\ot y\bi+y\bi\ot\sigma(y\boo, x)$,

\item[(G23)] $\Delta_{V}(xy)\\
=x\li y\ot x\lii+x\lii y\ot x\li-x\lii\ot y x\li+y\li\ot x y\lii\\
+y\li\ot y\lii x-x y\li\ot y\lii+(x\boi\trr y)\ot x\boo+(x\bi\trr y)\ot x\boo\\
-x\boo\ot(y\trl x\boi)+y\boo\ot(x\trl y\bi)+y\boo\ot(y\bi\trr x)-(x\trl y\boi)\ot y\boo,$

\item[(G24)] $\Delta_{V}(yx)+\tau\Delta_{V}(yx)\\
=x\li \ot y x\lii+y x\lii \ot x\li+y\li x\ot y\lii+y\lii\ot y\li x\\
+x\boo\ot(y\trl x\bi)+(y\trl x\bi)\ot x\boo+(y\boi\trr x)\ot y\boo+y\boo\ot(y\boi\trr x).$
\end{enumerate}
\end{corollary}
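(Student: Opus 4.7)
The plan is to derive this corollary as a direct specialization of Theorem \ref{thm-42}, by setting the actions $\ppr\colon V\otimes A\to A$ and $\ppl\colon A\otimes V\to A$ equal to zero throughout. First I would check that under $\ppr=0$ and $\ppl=0$, the multiplication in Theorem \ref{thm-42},
\[
(a+x)(b+y)=\big(ab+x\ppr b+a\ppl y+\sigma(x,y)\big)+\big(xy+x\trl b+a\trr y\big),
\]
collapses to $(a+x)(b+y)=(ab+\sigma(x,y))+(xy+x\trl b+a\trr y)$, which is exactly the multiplication in the corollary; the comultiplication is already the same in both statements. Hence the underlying cocycle cross product and cycle cross coproduct structures on $E=A\oplus V$ are those produced by Theorem \ref{thm-42} applied to the restricted extending datum $(\trr,\trl,\sigma,\cdot_V,\rho,\gamma,Q,\Delta_V,\ppr=0,\ppl=0)$.

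Next I would walk through the 24 compatibilities (F1)--(F24) and observe that every occurrence of $\ppr$ or $\ppl$ drops out, yielding (G1)--(G24). Concretely: in (F3) and (F4) the summands $\Delta_A(x\ppr b)$, $(x\boo\ppr b)$, $(b\ppl x\boo)$, $(a\ppl y)$ etc.\ vanish, leaving precisely (G3) and (G4); in (F7) the cocycle conditions involving $(x\qi\ppl y)$, $(y\ppr x\qi)$ etc.\ become zero, and (F7) reduces to (G7); the same mechanism turns (F14) and (F15) into (G14) and (G15), and it turns (F22) into (G22). Similarly (F1), (F2), (F8)--(F13), (F16)--(F21), (F23) and (F24) shed their $\ppr$-- and $\ppl$--terms and reduce exactly to (G1), (G2), (G8)--(G13), (G16)--(G21), (G23) and (G24). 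Equations (F5), (F6), (F17) already contain no $\ppr$, $\ppl$ and pass through unchanged to (G5), (G6), (G17).

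Finally I would verify that the algebra side $A_\sigma\#V$ (viewed with $\ppr=\ppl=0$) being alternative is still needed in the hypothesis, and likewise for the coalgebra side $A\#{}^{Q}V$. Both of these appear as the stated assumption in the corollary, matching the assumption of Theorem \ref{thm-42}. Consequently $E=A^{}_{\sigma}\#{}^{Q}_{}V$ equipped with the given multiplication and comultiplication is an alternative bialgebra if and only if conditions (G1)--(G24) hold. I expect the only real bookkeeping step -- and the main mild obstacle -- to be a careful term-by-term match of the longer identities such as (F1), (F7) and (F22) with their targets (G1), (G7) and (G22), to make sure no surviving term has been overlooked in the specialization; everything else is transcription.
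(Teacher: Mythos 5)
Your proposal matches the paper exactly: the paper introduces this corollary with the sentence ``A very special case is that when $\ppr$ and $\ppl$ are trivial in the above Theorem \ref{thm-42}'' and gives no further proof, so the intended argument is precisely the specialization $\ppr=\ppl=0$ of Theorem \ref{thm-42} with the term-by-term reduction of (F1)--(F24) to (G1)--(G24) that you describe. Your checks of the representative cases (F3), (F7), (F22) are the right bookkeeping and the proposal is correct.
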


\section*{Acknowledgements}
This is a primary edition, something should be modified in the future.

\vskip7pt
\footnotesize{
\noindent Tao Zhang\\
College of Mathematics and Information Science,\\
Henan Normal University, Xinxiang 453007, P. R. China;\\
 E-mail address: \texttt{{zhangtao@htu.edu.cn}}

 \vskip7pt
\footnotesize{
\noindent Fang Yang\\
College of Mathematics and Information Science,\\
Henan Normal University, Xinxiang 453007, P. R. China;\\
 E-mail address: \texttt{{htuyangfang@163.com}}

\end{document}